\theoremstyle{definition}
\newtheorem{definition}{Definition}[section]
\theoremstyle{plain}
\newtheorem{theorem}{Theorem}[section]
\newtheorem{lemma}{Lemma}[section]
\newtheorem{proposition}{Proposition}[section]
\newtheorem{conjecture}{Conjecture}
\theoremstyle{remark}
\newtheorem{remark}{Remark}[section]
\newcommand{\Boxed}[1]{\mathbin{\ooalign{$\Box$\crcr
  \hidewidth
  \raise0.55ex\hbox{$\scriptscriptstyle{#1}$}%
  \hidewidth
}}}
\newcommand{\g}{\mathfrak{g}}
\newcommand{\W}{\mathcal{W}}
\newcommand{\Z}{\mathbb{Z}}
\newcommand{\Q}{\mathbb{Q}}
\newcommand{\C}{\mathbb{C}}
\newcommand{\cA}{\mathcal{A}}
\title[Fourth order MLDEs and W-algebras]{Modular linear differential equations of fourth order and minimal $\mathcal{W}$-algebras}
\author{Kazuya Kawasetsu\and Yuichi Sakai}
\address{School of Mathematics and Statistics, the University of Melbourne, 3010, Australia  \and Multiple Zeta Research Center, Kyushu University, 744, Motooka, Nishi-ku, Fukuoka 819-0395, Japan.}
\email{kazuya.kawasetsu@unimelb.edu.au \and dynamixaxs@gmail.com}
\date{}
\keywords{Modular linear differential equations, Modular forms, Vertex operator algebras, minimal $\mathcal{W}$-algebras, Deligne exceptional series, Modular invariance of characters}
\subjclass[2010]{Primary 17B69; Secondary 17B67, 17B25, 17B68.}
\renewcommand{\=}{\,=\,}
\begin{document}

\begin{abstract}
A~characterization of the minimal $\W$-algebras associated with
the Deligne exceptional series at level $-h^\vee/6$ is obtained by using
one-parameter family  of modular linear differential equations of order $4$.
In particular, the characters of the Ramond-twisted modules of minimal $\mathcal{W}$-algebras related to the Deligne exceptional series satisfy one of these differential equations.
In order to obtain the characterization, the differential equations in the one parameter family which have solutions of ``CFT type" are
classified, whose solutions are explicitly described.
\end{abstract}

\maketitle

\section{Introduction}
The modular linear differential equations (MLDEs) play an~important role in the study of
 2D~conformal field theories, the theory of vertex operator algebras (VOAs) and 
the theory of modular forms (see e.g.~\cite{MMS}, \cite{KZ2}, 
\cite{DLM}, \cite{Mil}, \cite{T} and \cite{AKNS}).
For example, they have appeared in attempts to classify $C_2$-cofinite, rational VOAs and rational conformal field theories (RCFTs)
from their characters.

S.~Mathur, S.~Mukhi and A.~Sen classified the unitary RCFTs
 whose characters  satisfy the second order MLDE of weight 0
\begin{equation}\label{eqn:second}
f''-\frac{1}{6}E_2f'-s E_4f=0\qquad (s\in\mathbb{C}),
\tag{$\sharp_s$}
\end{equation}
where $f$ is a~function on the complex upper-half plane $\mathbb{H}$ and $E_k$ is the normalized Eisenstein series
of weight $k$ (\cite{MMS} and \cite{MMS2}).
Here
\[
f'=D(f)=\frac{1}{2\pi\sqrt{-1}}\frac{df}{d\tau}=q\frac {df}{dq},\qquad (q=e^{2\pi i\tau},
\ \tau\in\mathbb{H}).
\]
(The eq.~$(\sharp_{\mu(t)})$, where $\mu(t)=t(t+2)/144$ for $t\in\mathbb{Q}$, 
is  known as the {\em Kaneko-Zagier equation}~\cite{KZ2}.)

In~\cite{MMS} and~\cite{MMS2}, they showed that such RCFTs coincide with the Wess-Zumino-Witten models (WZW models) at level $1$ associated to the 
Lie algebras in the sequence
$A_1\subset A_2 \subset G_2 \subset D_4 \subset F_4 \subset E_6 \subset E_7 \subset E_8
$, which is called the {\em Deligne exceptional series} (\cite{D}).
Note that the chiral algebra of the WZW model at level $k\in\Z_{\geq 0}$
associated to a~simple Lie algebra $\g$
coincides with the {\it affine VOA} at level $k$ associated with the same Lie algebra $\g$.

The main theme of this paper is the following relations
between MLDEs and VOAs associated with the Deligne
exceptional series.
We first prove a~vertex operator algebraic analogue of the above result
in~\cite{MMS2}.
Let $V$ be a~self-dual simple, $C_2$-cofinite, rational VOA of CFT type 
such that the conformal weights of the $V$-modules are all non-negative 
and that the characters of modules over $V$
satisfy~$(\sharp_s)$ for some $s\in \C$.
We show that $V$ is isomorphic to the affine VOA at level~$1$
associated with a Lie algebra in the Deligne exceptional series (Theorem~\ref{sec:propsecond} and Remark~\ref{sec:mms} (ii)).

Moreover, we find other MLDEs related to the Deligne exceptional series:
 we establish a~characterization of {\it $\mathcal{W}$-algebras} associated with the Deligne exceptional series in terms of the following one-parameter family of MLDEs of order $4$.
Let $s$ be a~complex number.
We study the MLDE of order $4$ and weight 0 which has the form
\begin{align}\label{eqn:fourth}
&D^4(f)-E_2 D^3(f)+(3D(E_2)+\alpha_1(s) E_4)D^2(f)\nonumber\\
&\quad -\Bigl(D^2(E_2)+\frac{\alpha_1(s)}{2}D(E_4)-\alpha_2(s) 
E_6\Bigr)D(f)+\alpha_3(s) E_8 f=0,\tag{$\flat_s$}
\end{align}
where 
\[
\alpha_1=\frac{-25s^2+120s+1332}{7200}, \quad
\alpha_2=\frac{(5s+6)^2}{14400},\quad
\alpha_3=\frac{(s-18)(s+6)(5s+6)^2}{8294400}.
\]
In \S~\ref{sec:cft},
we give a~necessary condition of $s\in \mathbb{C}$ such that~\eqref{eqn:fourth}
has a~solution of CFT type, where a~function $f$ is called
of CFT type if
$f$ has a~$q$-expansion 
$
f(\tau)=q^\alpha(1+\sum_{n=1}^\infty a_nq^n)
$
, where $\alpha$ is a~rational number and $a_n$ is a~non-negative integer for each positive integer $n$.
Moreover, we construct exact solutions of~\eqref{eqn:fourth} for these values of $s$ in Appendices~\ref{sec:basisrational}--\ref{sec:quasimodular}.

The space spanned by the characters of modules over a~simple, $C_2$-cofinite, rational VOA of CFT type is invariant under
the slash action with weight $0$ of $SL_2(\mathbb{Z})$, and is contained in the space of solutions of a~MLDE (\cite{Zhu}).
The modular invariance property for  $\frac{1}{m}\mathbb{Z}$-graded, simple, $C_2$-cofinite,
$\Z_m$-rational, conical VOAs $V$ is proved in~\cite{V} (see also \cite{DLM}), where
$m\in\Z_{>0}$.
Here a~$\frac{1}{m}\mathbb{Z}$-graded VOA $V=\bigoplus_{n\in\frac{1}{m}
\Z}V_{n}$
is called {\it conical} if $\dim V_0=1$ and $\dim V_{n}=0$ for all $n<0$.
(Therefore, the conical $\Z$-graded VOAs are the same as the VOAs of CFT type.)

Let $W$ be a~$\frac{1}{2}\mathbb{Z}_{\geq 0}$-graded VOA.
Let $W^{(0)}$ and $W^{(1)}$ be the subspaces of $W$ spanned by the homogeneous
vectors of weight $\mathbb{Z}$ and $1/2+\mathbb{Z}$, respectively. 
We say that $W$ is {\em proper} $\frac{1}{2}\mathbb{Z}_{\geq 0}$-graded if $W^{(1)}\neq 0$.
The space spanned by the characters of the modules over a~proper $\frac{1}{2}\mathbb{Z}_{\geq 0}$-graded VOA
is not modular invariant, but the space spanned by the characters of the {\em Ramond-twisted modules}
is modular invariant (see \S~\ref{sec:voa} for the details).

The {\em $\mathcal{W}$-algebras} are $\frac{1}{2}\mathbb{Z}_{\geq 0}$-graded VOAs constructed 
 using the {\em Drinfeld-Sokolov reduction} from affine VOAs (see~\cite{KRW}).
The characters of Ramond-twisted modules over $\mathcal{W}$-algebras 
at admissible levels are determined in~\cite{KW2}.
Moreover, several examples of $\mathcal{W}$-algebras
at non-admissible levels are found to be $C_2$-cofinite and $\Z_2$-rational in~\cite{Kaw}.
Among them, the characters of the Ramond-twisted modules over the $\mathcal{W}$-algebra $\mathcal{W}_{-5}(E_8,f_\theta)$ satisfy the second order MLDE~$(\sharp_{\mu(19/5)})$ (\cite{Kaw}).
It was proved that there do not exist RCFTs whose characters satisfy $(\sharp_{\mu(19/5)})$ (\cite{MMS2}).
Thus we are interested in studying $\mathcal{W}$-algebras and $\frac{1}{2}\mathbb{Z}_{\geq 0}$-graded VOAs with connections of the MLDEs.

In \S~\ref{sec:voa}--\S~\ref{sec:extension}, we show that the characters of the Ramond-twisted modules over the $\mathcal{W}$-algebra
$\mathcal{W}_{-h^\vee/6}(\mathfrak{g},f_\theta)$ at level $-h^\vee/6$ 
associated with a~Lie algebra $\g$ in the Deligne exceptional series
 satisfy~\eqref{eqn:fourth} with 
\begin{equation}\label{eqn:deligne1}
s=\frac{6(7h^\vee-18)}{5(h^\vee+6)},
\end{equation}
 where $h^\vee$ is the dual Coxeter number of $\mathfrak{g}$.
 The $\W$-algebras $W=\W_{-h^\vee/6}(\g,f_\theta)$ satisfy the conditions
 (a)--(h) ({\em Main Conditions})  below:

\medskip\noindent
{\bf Main Conditions:}

 \noindent
 {\rm (a)} There is a~complex number $s$ such that the space of characters of Ramond-twisted $W$-modules is contained
in the space of solutions of~\eqref{eqn:fourth}
and such that any pair of indicial roots of~\eqref{eqn:fourth}
does not have a~non-zero integral difference.
 


\noindent
{\rm (b)} The vector spaces $W^{(0)}$ and $W^{(1)}$ have the forms 
$$
W^{(0)}\cong V\otimes L(-3/5,0),
\quad W^{(1)}\cong P\otimes L(-3/5,3/4)$$
 with a~simple, $C_2$-cofinite, rational VOA $V$
and simple current $V$-module $P$.
Here $L(-3/5,h)$ is the Virasoro minimal model of central charge $-3/5$ and conformal weight $h$.

\noindent
{\rm (c)} The conformal weight of the simple current $V$-module $P$ is non-negative.

\noindent
 {\rm (d)} The conformal weight of each non-vacuum irreducible $V$-module is non-zero.
 
 \noindent
 {\rm (e)} Let $M$ and $N$ be irreducible $V$-modules
with the same conformal weights.
Then the characters of $M$ and $N$ coincide.

\noindent
{\rm (f)}  Let $M$ and $N$ be irreducible Ramond-twisted $W$-modules
with the same conformal weights. Then $M$ and $N$ have the same character.

\noindent
{\rm (g)} Let $S$ and $T$ be irreducible Ramond-twisted $W$-modules with the minimum conformal weight. Let $S\cong\bigoplus_{i=1}^n S^i$ be the irreducible decomposition of $S$ as a~$W^{(0)}$-module.
Then as a~$W^{(0)}$-module, $T$ decomposes into the direct sum $T\cong\bigoplus_{i=1}^n T^i$ such that for each $i=1\ldots,n$, either
$T^i=S^i$ or $T^i=(S^i)'$ holds,
where $(S^i)'$ is the restricted dual of $S^i$ over $W^{(0)}$.

\noindent
{\rm (h)} Any irreducible $V$-module is a~summand of either an~untwisted or Ramond-twisted $W$-module $M$ as a~$V$-module.

\medskip
We show that the proper $\frac{1}{2}\Z_{\geq 0}$-graded VOA $W$ satisfying
(a)--(h) is the $\W$-algebras $\W_{-h^\vee/6}(\g,f_\theta)$ associated with a~Lie algebra $\g$ in the Deligne exceptional series (see Theorem~\ref{sec:identification2}).
For the moment, we cannot omit any of these 8 conditions.
Although condition (a) provides information on the characters of Ramond-twisted $W$-modules as $(\sharp_s)$ gives information on the characters of RCFTs in~\cite{MMS} and \cite{MMS2},
we note that determining a~proper $\frac{1}{2}\mathbb{Z}_{\geq 0}$-graded VOA $W$
by using the characters of Ramond-twisted $W$-modules may not be easy.
It is because the vacuum module of $W$ 
is not Ramond-twisted.
Note also that characters of  minimal $\mathcal{W}$-algebras can be determined by using the Kazhdan-Lusztig polynomials in principle (\cite{A}), but it is not easy to compute the general Kazhdan-Lusztig polynomials.

\medskip
The paper is organized as follows.
In \S~\ref{sec:overview}, we give a~brief explanation of a~proof of Theorem~\ref{sec:identification2} 
by using Main Conditions (the proof is given in \S~\ref{sec:extension}).
In \S~\ref{sec:cft}, candidates of $s\in\mathbb{Q}$ such that~\eqref{eqn:fourth}
has a~solution of CFT type are found by solving Diophantus equations.
Fundamental systems of the space of solutions of such $(\flat_s)$'s
are given in Appendices~\ref{sec:basisrational}--\ref{sec:quasimodular}.
In \S~\ref{sec:construction}, we explain how to construct these solutions of~$(\flat_s)$.
Moreover, we deduce some properties of the space of solutions of~$(\flat_s)$,
which will be used in the proof of Theorem~\ref{sec:identification2}.
In \S~\ref{sec:voa} we briefly recall the notion for VOAs.
In \S~\ref{sec:affine} we review affine VOAs and give a~characterization of
the affine VOAs associated with the Deligne exceptional series in terms of~$(\sharp_s)$.
In \S~\ref{sec:virasoro} we study the modules over an~extension of the minimal model
$L(-3/5,0)$.
In \S~\ref{sec:identification} we show that the characters of the Ramond-twisted 
modules over minimal $\mathcal{W}$-algebras associated with the Deligne exceptional
series satisfy~\eqref{eqn:fourth} for $s$ given by~\eqref{eqn:deligne1}.
Theorem~\ref{sec:identification2} is proved in \S~\ref{sec:extension}.
In \S~\ref{sec:complements} we give several comments on
the relation between the characters of VOAs and~\eqref{eqn:fourth}.
In Appendix~\ref{sec:modularforms}, we introduce modular forms which are necessary to
represent explicit solutions of~\eqref{eqn:fourth} which have solutions of CFT type as explained in Appendices~\ref{sec:basisrational}--\ref{sec:quasimodular}.
We divide such a~set of $s$ into two parts: (I) the number $s$ such that~\eqref{eqn:fourth} has a~solution of CFT type and each solution of CFT type is not quasimodular of positive depth and
(II) the number $s$ such that~\eqref{eqn:fourth} has a~quasimodular solution of CFT type of positive depth.
The quasimodular forms (of positive depth) can be expressed in terms of homogenous polynomials 
that have modular forms as coefficients with respect to~$E_2$~(cf.~\cite{KZ1}).
The solutions of~\eqref{eqn:fourth} of case (I) are studied in
 Appendix~\ref{sec:basisrational}, and that of~\eqref{eqn:fourth} of case (II) are studied
  in Appendix~\ref{sec:quasimodular}.
  In Appendix~\ref{sec:poly}, we define polynomials which are used to represent solutions of MLDEs in Appendices~\ref{sec:basisrational}--\ref{sec:quasimodular}.

\section{Overview of the proof of a~main result}\label{sec:overview}

In this section, we briefly explain how to prove Theorem~\ref{sec:identification2} 
in \S~\ref{sec:extension}.

By condition (a), we have a~restriction on the conformal weights of the Ramond-twisted irreducible $W$-modules.
Then by (b) and (h), we obtain restrictions on the conformal weights of the
 irreducible $V$-modules.
(The condition (h) is related with the orbifold theory (cf.~\cite{CM}).)
By using (a) and (c)--(g), we can study the $V$-modules and
Ramond-twisted $W$-modules.
(The condition (c) is satisfied by all known simple $C_2$-cofinite rational VOAs
to the best of authors' knowledge.)
Suppose that there are only two inequivalent irreducible $V$-modules up to isomorphisms.
It then follows that 
the space of characters of Ramond-twisted $W$-modules is two-dimensional. 
By using the basis of MLDEs given in Appendix~\ref{sec:basisrational}, 
we see that $s=32/5$ and the characters of $V$ satisfies $(\sharp_{\mu(7/2)})$.
It then follows by Theorem~\ref{sec:propsecond} that $V\cong V_1(E_7)$, where $V_k(\mathfrak{g})$ is the affine VOA at level $k$ associated with a~finite-dimensional simple Lie algebra $\mathfrak{g}$.
Then we see that $W\cong \mathcal{W}_{-5}(E_8,f_\theta)$
by Lemma~\ref{sec:branching}.
Suppose that the number of the irreducible $V$-modules up to isomorphisms is more than $2$.
Then we can construct the modules of the VOA $R=V\otimes V_1(A_1)\oplus P\otimes V_1(A_1;\alpha/2)$, and we see that the characters of the $R$-modules satisfy $(\sharp_{\mu(t)})$ with $t\in\mathbb{Q}$.
It then follows by  Theorem~\ref{sec:propsecond}
 that $R\cong V_1(\mathfrak{g})$ with a~Lie algebra $\mathfrak{g}$ in the Deligne exceptional series.
Then we see that $W\cong \mathcal{W}_{-h^\vee/6}(\mathfrak{g},f_\theta)$
by Lemma~\ref{sec:branching}.
(See \S~\ref{sec:extension} for the detail of the proof.)

%

\section{Modular linear differential equations with solutions of CFT type}
\label{sec:cft}

In this section we show that
if $(\flat_s)$ has a~solution of CFT type, then $s$ belongs to~\eqref{eqn:cand5} on \S~\ref{sec:necessarylist}.

\subsection{Indicial equations}
\label{sec:indicial}
Before studying MLDEs with solutions of CFT type, we recall the notion of indicial equations of MLDEs, which is used to classify such MLDEs.

Let $f$ be a~function of the form 
$f(\tau)=q^\alpha\sum_{n=0}^\infty a_nq^n$ with $\alpha\in\mathbb{Q}$, $a_0=1$ and $a_n\in\mathbb{C}$ for each $n\geq 1$.
We call the number $\alpha$ the {\em exponent} of $f$.
Let $n$ be a~non-negative integer, and suppose that~$\alpha+n\ne0$ for any~$n>0$. 
Let $s$ be a~complex number, and
suppose that $f$ is a~solution of~\eqref{eqn:fourth}.
By substituting the $q$-expansion of $f(\tau)$ into~\eqref{eqn:fourth} and comparing the coefficients of $q^{\alpha+n}$ in the both-sides, we obtain a~relation
\begin{align}\label{eqn:recursion1}
&(\alpha+n)^4a_n+\sum_{i=0}^n \biggl(-e_{2,i}(n-i+\alpha)^3a_{n-i}+(3i e_{2,i}+\alpha_1(s)e_{4,i})(n-i+\alpha)^2a_{n-i} \nonumber\\
&\quad-\left(i^2e_{2,i}+\frac{\alpha_1(s)}{2}ie_{4,i}
-\alpha_2(s)e_{6,i}\right)(n-i+\alpha)a_{n-i}+\alpha_3(s)e_{8,i}a_{n-i} \biggr)=0,
\end{align}
where 
$
E_k(\tau)=\sum_{n=0}^\infty e_{k,n}q^n
$
is the Fourier expansion of the normalized Eisenstein series of weight $k$.

By substituting $n=0$ into~\eqref{eqn:recursion1}, we see that $\alpha$ satisfies 
the relation
$(120\alpha-5s-6)(24\alpha-s-6)(24\alpha+s-18)(120\alpha+5s+6)=0$,
which is called the {\em indicial equation} of~\eqref{eqn:fourth}.
The roots $\alpha_1=-s/24-1/20$, $\alpha_2=-s/24+3/4$,
$\alpha_3=s/24+1/4$ 
and $\alpha_4=s/24+1/20$ of the indicial equation are called the {\em indicial roots} ({\em indices}) of~\eqref{eqn:fourth}.
Since $\alpha\in \Q$ and $\alpha$ coincides with one of $\alpha_1,\ldots,\alpha_4$, we have $s\in\Q$.
It is well known that if there is no integral differences among the indicial roots, then for each indicial root $\alpha$, there exists a~solution of~\eqref{eqn:fourth} of the form $f(\tau)=q^\alpha(1+O(q))$.

In the following, suppose that $f$ is of CFT type, so that  $a_n\in\Z_{\geq0}$ for $n\geq 1$.
We call $-24\alpha$ a~{\em formal central charge} of~\eqref{eqn:fourth}.
For each $1\leq i\leq 4$, we call $\alpha_i-\alpha$ a~{\em formal conformal weight}
of~\eqref{eqn:fourth}.
%

\subsection{The case $\mathbf{\alpha=-s/24-1/20}$}\label{sec:alpha1}
It follows from~\eqref{eqn:recursion1} with $n=1$ that
$
(5 s-54) \bigl(25 s^2+5 s a_1+120 s-42 a_1+108\bigr)=0.
$
Assuming that $s\neq 54/5$,
we have
\begin{equation}\label{eqn:diop1}
25s^2+5sa_1+120s-42a_1+108=0.
\end{equation}
By setting $t=5s$, we see that~\eqref{eqn:diop1} becomes 
\begin{equation}\label{eqn:diop12}
t^2+ta_1+24t-42a_1+108=0,
\end{equation}
which is a~monic quadratic in $t$ with integral coefficients.
Therefore, any rational solution $t$ is an~integer.
Since $(t-42)(t+a_1+66)=-2880$ by~\eqref{eqn:diop12} and both $t-42$ and $t+a_1+66$ are integers, we see that $s$ is an~element of the set
\begin{align}\label{eqn:solve1}
\Bigl\{&-\frac{2838}{5},-\frac{1398}{5},-\frac{918}{5},-\frac{678}{5},-\frac{534}{5},-\frac{438}{5},-\frac{318}{5},-\frac{278}{5},-\frac{246}{5},-\frac{198}{5},-30,-\frac{138}{5},-\frac{118}{5},\nonumber\\
&-\frac{102}{5},-\frac{78}{5},-\frac{54}{5},-\frac{48}{5},-\frac{38}{5},-6,-\frac{22}{5},-\frac{18}{5},-\frac{6}{5},-\frac{3}{5},\frac{2}{5},\frac{6}{5},2,\frac{12}{5},\frac{18}{5},\frac{22}{5},\frac{24}{5},\frac{26}{5},\frac{27}{5},6,\frac{32}{5},\\
&\frac{33}{5},\frac{34}{5},\frac{36}{5},\frac{37}{5},\frac{38}{5},\frac{39}{5},8,\frac{41}{5}\Bigr\}.\nonumber
\end{align}

For each value $s$ in~\eqref{eqn:solve1}, we see that any formal conformal weight of~\eqref{eqn:fourth} is not a~positive integer by evaluating the value $\alpha_j-\alpha_1\,(j=2,3,4)$.
Therefore, the Fourier expansion of the solution $f$ is determined by~\eqref{eqn:recursion1}.

By substituting $n=2$ into~\eqref{eqn:recursion1}, we have
\begin{align*}
&-386208-720360s-255500s^2-15750s^3+3125s^4-72792a_1
-22140sa_1-26250s^2a_1\\
&\quad+1375s^3a_1+139536a_2-12960sa_2+300s^2a_2=0.
\end{align*}
Since $a_2$ is a~non-negative integer,~\eqref{eqn:solve1} reduces to
\[
\Bigl\{-\frac{438}{5},-\frac{318}{5},-\frac{198}{5},-30,-\frac{138}{5},-\frac{78}{5},-\frac{48}{5},-\frac{38}{5},-\frac{18}{5},-\frac{6}{5},-\frac{3}{5},\frac{2}{5},\frac{6}{5},\frac{12}{5},\frac{18}{5},\frac{22}{5},\frac{27}{5},6,\frac{32}{5},\frac{39}{5}\Bigr\}.
\]
The similar discussions above for $n=3$ and $4$ show that this set reduces to
\begin{equation}\label{eqn:cand1}
\Bigl\{-\frac{318}{5},-\frac{198}{5},-\frac{138}{5},-\frac{78}{5},-\frac{48}{5},-\frac{38}{5},-\frac{18}{5},-\frac{6}{5},-\frac{3}{5},\frac{2}{5},\frac{6}{5},\frac{12}{5},\frac{18}{5},\frac{22}{5},\frac{27}{5},6,\frac{32}{5}\Bigr\}.
\end{equation}
Hence, if $f$ is a~solution of~\eqref{eqn:fourth} of CFT type,
then $s$ is either $54/5$ or a~value in~\eqref{eqn:cand1}.

\subsection{The case $\mathbf{\alpha=-s/24+3/4}$}
In this case, we have 
$
(s-18) \bigl(75 s^2+15 s a_1+100 s-306 a_1+348\bigr)=0
$
by substituting $n=1$ into~\eqref{eqn:recursion1}.
%
Assuming $s\neq 18$ and setting $t=15s$,
 we have $t^2+3ta_1+20t-918a_1+1044=0$, which is a~monic quadratic equation in $t$ with integral coefficients.
Therefore any rational solution $t$ is an~integer.
Since $(t-306)(t+3a_1+326)=100800$, it follows that $s$ is an~element of the set 
\begin{align}\label{eqn:solve2}
\Bigl\{&-\frac{16698}{5},-\frac{6618}{5},-\frac{4098}{5},-\frac{2298}{5},-\frac{5294}{15},-\frac{1578}{5},-\frac{948}{5},-\frac{858}{5},-\frac{1934}{15},-114,-\frac{498}{5},-\frac{1094}{15},\nonumber\\
&-\frac{318}{5},-\frac{494}{15},-\frac{138}{5},-\frac{254}{15},-\frac{66}{5},-\frac{48}{5},-\frac{44}{15},-\frac{14}{15},-\frac{3}{5},\frac{6}{5},\frac{82}{15},\frac{106}{15},\frac{42}{5},\frac{166}{15},12,\frac{226}{15},\frac{78}{5},\frac{50}{3},\nonumber\\
&\frac{256}{15},\frac{87}{5},\frac{271}{15},\frac{274}{15},\frac{286}{15},\frac{96}{5},\frac{292}{15},\frac{298}{15},\frac{301}{15},\frac{304}{15}\Bigr\}.
\end{align}

For each value $s$ in the above set, we see that any formal conformal weight of~\eqref{eqn:fourth} is not a~positive integer  by evaluating the value $\alpha_j-\alpha_2\,(j=1,3,4)$.
Therefore, the Fourier expansion of the solution $f$ is determined by~\eqref{eqn:recursion1}.

By substituting $n=2$ into~\eqref{eqn:recursion1}, we have
\begin{align*}
&625  s^4-1350  s^3+475 a_1 s^3-155340  s^2-13650 a_1 s^2+140 a_2 s^2-385128  s-1836 a_1 s\\
&\quad-8736 a_2 s-474336 +161352 a_1+136080 a_2=0.
\end{align*}
For an~ $s$ in~\eqref{eqn:solve2},  the following one gives non-negative integer $a_2$
\[
\Bigl\{-\frac{498}{5},-\frac{318}{5},-\frac{138}{5},-\frac{3}{5},\frac{6}{5},\frac{42}{5},\frac{87}{5},\frac{96}{5}\Bigr\}.
\]

Similarly, by considering $n=3,4,\ldots,32$, the set reduces to
\begin{equation}\label{eqn:cand2}
\Bigl\{-\frac{3}{5},\frac{6}{5},\frac{42}{5}\Bigr\}.
\end{equation}
Hence, if $f$ is a~solution of~\eqref{eqn:fourth} of CFT type,
then $s$ is either $18$ or a~value in~\eqref{eqn:cand2}.

\subsection{The case $\mathbf{\alpha=s/24+1/4}$}
It follows from~\eqref{eqn:recursion1} with $n=1$ that 
$
(s+6) \bigl(25 s^2-5 s a_1+130 s-78 a_1+144\bigr)=0.
$
%
%
Assuming $s\neq -6$ and setting $t=5s$, we see that $t^2-ta_1+26t-78a_1+144=0$, which is a~monic quadratic equation in $t$ with integral coefficients.
It then follows that any rational solution $t$ is an~integer.
Since $(t+78)(t-a_1-52)=4200$, we see that $s$ belongs to the set
\begin{align}\label{eqn:solve3}
\Bigl\{&-\frac{77}{5},-\frac{76}{5},-15,-\frac{74}{5},-\frac{73}{5},-\frac{72}{5},-\frac{71}{5},-14,-\frac{68}{5},-\frac{66}{5},-\frac{64}{5},-\frac{63}{5},-\frac{58}{5},-\frac{57}{5},-\frac{54}{5},-\frac{53}{5},\nonumber\\
&-10,-\frac{48}{5},-\frac{43}{5},-\frac{38}{5},-\frac{36}{5},-\frac{28}{5},-\frac{22}{5},-\frac{18}{5},-\frac{8}{5},-\frac{3}{5},\frac{6}{5},\frac{22}{5},\frac{27}{5},\frac{42}{5},\frac{62}{5},\frac{72}{5},18,\frac{97}{5},\frac{122}{5},\frac{132}{5},\nonumber\\
&\frac{202}{5},\frac{222}{5},\frac{272}{5},\frac{342}{5},\frac{447}{5},\frac{522}{5},\frac{622}{5},\frac{762}{5},\frac{972}{5},\frac{1322}{5},\frac{2022}{5},\frac{4122}{5}\Bigr\}.
\end{align}

For each value $s$ in the above set, we see that any formal conformal weight of~\eqref{eqn:fourth} is not a~positive integer  by evaluating the value $\alpha_j-\alpha_3\,(j=1,2,4)$.
Therefore, the Fourier expansion of the solution $f$ is uniquely determined by~\eqref{eqn:recursion1}.

By substituting $n=2$ into~\eqref{eqn:recursion1}, we have
\begin{align*}
&-245592  - 295812 s  - 124830 s^2  - 
  9975 s^3  + 625 s^4  + 15120 a_1 + 9216 s a_1 - 
  6180 s^2 a_1 \\
&\quad- 400 s^3 a_1 + 54648 a_2 + 5016 s a_2 + 
  110 s^2 a_2=0.
\end{align*}
For $s$ in~\eqref{eqn:solve3},  the following one gives non-negative integer $a_2$:
\begin{align*}
\Bigl\{&-15,-\frac{73}{5},-\frac{68}{5},-\frac{66}{5},-\frac{63}{5},-\frac{58}{5},-\frac{54}{5},-\frac{48}{5},-\frac{38}{5},-\frac{18}{5},-\frac{8}{5},-\frac{3}{5},\frac{6}{5},\frac{22}{5},\frac{42}{5},\frac{62}{5},\frac{72}{5},\frac{122}{5},\frac{132}{5},\\
&\frac{222}{5},\frac{342}{5},\frac{447}{5},\frac{762}{5},\frac{2022}{5}\Bigr\}.
\end{align*}
Similarly, by considering $n=3,4,\ldots,23$, the set reduces to
\begin{equation}\label{eqn:cand3}
\Bigl\{-\frac{66}{5},-\frac{18}{5},-\frac{8}{5},-\frac{3}{5},\frac{6}{5}\Bigr\}.
\end{equation}
Hence, if $f$ is a~solution of~\eqref{eqn:fourth} of CFT type,
then $s$ is either $-6$ or a~value in~\eqref{eqn:cand3}.

\subsection{The case $\mathbf{\alpha=s/24+1/20}$}
By substituting $n=1$ into~\eqref{eqn:recursion1}, we have
$
(5 s+66) \bigl(25 s^2-5 s a_1+45 s-18 a_1+18\bigr)=0.
$
%
Assuming $s\neq -66/5$ and setting $t=5s$,
we see that $t^2-ta_1+9t-18a_1+18=0$, which is a~monic quadratic equation in $t$ with integral coefficients.
Therefore the number $t$ is an~integer.
Since $(t+18)(t-a_1-9)=180$, we see that
\begin{align}\label{eqn:solve2}
s=&-\frac{17}{5},-\frac{16}{5},-3,-\frac{14}{5},-\frac{13}{5},-\frac{12}{5},-\frac{9}{5},-\frac{8}{5},-\frac{6}{5},-\frac{3}{5},0,\frac{2}{5},\frac{12}{5},\frac{18}{5},\frac{27}{5},\frac{42}{5},\frac{72}{5},\frac{162}{5}.
\end{align}

For each  $s$ in~\eqref{eqn:solve2}, we see that any formal conformal weight of~\eqref{eqn:fourth}  is not a~positive integer  by evaluating the value $\alpha_j-\alpha_4\,(j=1,2,3)$.
Therefore, the Fourier expansion of the solution $f$ is uniquely determined by~\eqref{eqn:recursion1}.

By substituting $n=2$ into~\eqref{eqn:recursion1}, we have
\begin{align*}
&-661608  - 1138860 s  - 551250 s^2  - 
  47625 s^3  + 3125 s^4  - 41472 a_1 + 11160 s a_1 - 
  24000 s^2 a_1 \\
&\quad- 1750 s^3 a_1 + 176904 a_2 + 18360 s a_2 + 
  450 s^2 a_2=0.
\end{align*}
Then among the values of $s$ in~\eqref{eqn:solve2},  the following give non-negative integer $a_2$:
\[
s=-3,-\frac{8}{5},-\frac{6}{5},-\frac{3}{5},\frac{2}{5},\frac{12}{5},\frac{42}{5}.
\]
Similarly, by considering $n=3$, we see that $s$ is one of the following:
\begin{equation}\label{eqn:cand4}
s=-\frac{8}{5},-\frac{6}{5},-\frac{3}{5},\frac{2}{5},\frac{12}{5},\frac{42}{5}.
\end{equation}
Hence, if $f$ is a~solution of~\eqref{eqn:fourth} of CFT type,
then $s$ is either $-66/5$ or a~value in~\eqref{eqn:cand4}.

\subsection{A~necessary condition of $s$ such that~\eqref{eqn:fourth} has a~solution of CFT type}\label{sec:necessarylist}

Let $s$ be a~complex number and suppose that~\eqref{eqn:fourth} has a~solution
of CFT type.
By combining~\eqref{eqn:cand1},~\eqref{eqn:cand2},~\eqref{eqn:cand3},~\eqref{eqn:cand4} and $s=54/5,18,-6,-66/5$, we see that $s$ is one of the following $23$ numbers:
\begin{align}\label{eqn:cand5}
&-\frac{318}{5},-\frac{198}{5},-\frac{138}{5},-\frac{78}{5},-\frac{48}{5},-\frac{38}{5},-\frac{18}{5},-\frac{6}{5},-\frac{3}{5},\frac{2}{5},\frac{6}{5},\frac{12}{5},\frac{18}{5},\frac{22}{5},\frac{27}{5},6,\frac{32}{5},\frac{54}{5},\\
&\frac{42}{5},
18,-\frac{66}{5},-6,-\frac{8}{5}.\nonumber
\end{align}

\section{Solutions of modular linear differential equations}\label{sec:construction}

Let $s$ be a~number in~\eqref{eqn:cand5}.
In Appendices~\ref{sec:basisrational}--\ref{sec:quasimodular}, we give solutions
of the MLDE $(\flat_s)$ of (quasi)modular forms.
In this section, we explain our method to construct such solutions of~$(\flat_s)$.
We first find a~certain rational number $k$ and a~positive integer $N$, 
and construct solutions $f$ of a MLDE $(\flat_s^k)$ defined below,
assuming that the MLDE $(\flat_s^k)$ has solutions of (quasi)modular forms
of weight $k$ and level $N$.
It then follows that the function $g=\eta^{-2k}f$ is a~solution of $(\flat_s)$. 
Thus, we find solutions of $(\flat_s)$ of quasimodular forms.

Moreover, we give some properties of the space of solutions of~$(\flat_s)$ which is used in
\S~\ref{sec:extension}.

\subsection{Construction of solutions}
In this subsection, we explain our method to construct solutions of MLDEs listed in Appendices~\ref{sec:basisrational}--\ref{sec:quasimodular}.

First, we introduce a~MLDE $(\flat_s^k)$.
Let $k$ and $s$ be complex numbers.
The differential equation
$$
\vartheta^{4}_{k}(f)
+\Big(\alpha_1(s)-\frac{11}{36}\Big)E_4\vartheta^{2}_{k}(f)
+\frac{36\alpha_1(s)+216\alpha_2(s)-5}{216}E_6\vartheta_{k}(f)
+\alpha_3(s)E_8 f\,=\,0
\leqno(\flat_{s}^{k})
$$
is a~MLDE of order 4 with weight~$k$,  
where $\vartheta_{k}(f)=D(f)-k(E_2/12)f$ is the Serre derivation and $\vartheta_k^{i}$ is the iterated 
Serre derivation defined by 
$\vartheta_{k}^{i}(f)=\vartheta_{k+2i-2}\circ\vartheta_{k+2i-4}^{i-1}(f)$ 
 for $i\ge1$ and  $\vartheta^{0}_{*}(f)=f$. 
By setting $k=0$ and using relations~$24\eta'=E_2 \eta$, $12E_2'=E_2^2-E_4$, 
$3E_4'=E_2E_4-E_6$ and $2E_6'=E_2E_6-E_4^2$ (and $E_4^2=E_8$), 
we see that the equation~$(\flat_{s}^{0})$ is equal to the MLDE~$(\flat_{s})$. 
Because $\vartheta_{k}(\eta^{2\ell}\cdot f)=\eta^{2\ell}\vartheta_{k-\ell}(f)$,  
$f$ is a~solution of~$(\flat_{s})$ if and only if $\eta^{2k}\cdot f$ is a~solution of~$(\flat_{s}^{k})$. 

Now we explain how to construct solutions of $(\flat_s)$ of (quasi)modular forms.
Let $s$ be a~number in~\eqref{eqn:cand5} and $\alpha_1,\ldots,\alpha_4$ the indicial roots of $(\flat_s)$ introduced in \S~\ref{sec:indicial}, which are rational numbers.
Set $k=-12\min\{\alpha_1,\ldots,\alpha_4\}$.
Let $N$ be the least common multiple of the denominators of the numbers
$\alpha_1-k/12,\ldots,\alpha_4-k/12$.
Let $F$ be a~solution of $(\flat_s^k)$ and suppose that $F$
is  a~modular form of weight~$k$ and level~$N$.
The modular forms of level $N$ and weight $k$ are homogeneous polynomials in specific modular forms of level $N$ given in Table~\ref{tb:forms} in Appendix~\ref{sec:modularforms}.
Therefore,  by calculating sufficiently many Fourier coefficients of $F$, we can determine $F$ as a~polynomial
in the modular forms.
In order to show that $F$ does satisfy the MLDE~$(\flat_s^k)$, we just
substitute $F$ to the left-hand side of $(\flat_s^k)$ and show that it is zero
by using differential relations and functional equations of modular forms given in Appendix~\ref{sec:modularforms}.
We can also use a~similar method to construct solutions of~$(\flat_s^k)$ of quasimodular forms.

We here perform this method for a~typical case~$\mathbf{s=6/5}$.
Since the set of indicial roots of $(\flat_{6/5})$ is given by $S=\{-1/10,1/10,3/10,7/10\}$,
we have $k=6/5$ and $N=5$.
Let $\alpha$ be any indicial root of $(\flat_{6/5})$.
Let $F_\alpha$ be a~solution of $(\flat_{6/5}^{6/5})$ of the form
$F_\alpha(\tau)=q^{\alpha+6/5}(1+O(q))$.
We suppose that $F_\alpha$ is a~modular form of weight $6/5$
and level $5$.
It then follows that $F_\alpha$ has the form $F_\alpha=\sum_{n=0}^6 \beta_n\psi_1^n\psi_2^{6-n}$ with $\beta_0,\ldots,\beta_6\in\mathbb{C}$, 
where $\psi_1$ and $\psi_2$ are modular forms of weight $1/5$ and level $5$ given
in Table~\ref{tb:forms} in Appendix~\ref{sec:modularforms}:
\begin{eqnarray*}
\psi_1(\tau)=\eta(\tau)^{2/5}q^{-1/60}\prod_{\substack{n>0\\ n\not\equiv 0, \pm 2\ \mathrm{mod}\,5}}\frac{1}{1-q^n}, \quad
\psi_2(\tau)=\eta(\tau)^{2/5}q^{11/60}\prod_{\substack{n>0\\ n\not\equiv 0, \pm 1\ \mathrm{mod}\,5}}\frac{1}{1-q^n}
\end{eqnarray*}
(Rogers-Ramanujan functions).
Since $F_\alpha/\eta^{12/5}$ is a~solution of $(\flat_{6/5})$ of exponent $\alpha$,
we can compute any Fourier coefficient of $F_\alpha$ 
by using~\eqref{eqn:recursion1}.
Therefore we can determine $\beta_0,\ldots,\beta_6$ (by computing finite number of Fourier coefficients of $F_\alpha$.)
In this way, we see that
\begin{align}\label{eqn:falpha}
&F_{-1/10}=\psi_1(\psi_1^5+2\psi_2^5), &&F_{1/10}=\psi_2(2\psi_1^5-\psi_2^5)/2,\nonumber\\
&F_{3/10}=\psi_1^4\psi_2^2, &&F_{7/10}=\psi_1^2\psi_2^4.
\end{align}
Thus we find an~explicit description of $\{F_\alpha|\alpha\in S\}$ 
 under the assumption that
$F_\alpha$ is a~modular form of weight $6/5$ and level $5$ for any $\alpha\in S$.
By using the differential relations of $\psi_1$ and $\psi_2$ written in  Appendix~\ref{sec:modularforms} {\bf(d)}, we can show that
the right-hand sides of~\eqref{eqn:falpha} do satisfy the MLDE~$(\flat_{6/5}^{6/5})$.
Since $\{f_{\alpha+1/10}:=F_\alpha/\eta^{12/5}|\alpha\in S\}$
is a~fundamental system of solutions of~$(\flat_{6/5})$,
we have an~explicit description of solutions of $(\flat_{6/5})$ in terms of $\psi_1$
and $\psi_2$, as written in Appendix~\ref{sec:basisrational} {\bf(f)}.

In a~similar way, we can construct solutions of $(\flat_s)$ listed in Appendices~\ref{sec:basisrational}--\ref{sec:quasimodular}.
By using the list of solutions, we have the following theorem.

\begin{theorem}\label{sec:modularcases}
Let $s$ be a~complex number and suppose that $(\flat_s)$ has a~solution
$f$ of CFT type. Suppose that $f$ is not a~quasimodular form of positive depth.
Then $s$ is one of the following $17$ numbers:
\begin{equation}\label{eqn:cand6}
-\frac{48}{5},-\frac{38}{5},-\frac{6}{5},-\frac{3}{5},\frac{2}{5},\frac{6}{5},\frac{12}{5},\frac{18}{5},\frac{22}{5},\frac{27}{5},6,\frac{32}{5},\frac{54}{5},18,-\frac{66}{5},-6,-\frac{8}{5}.
\end{equation}
\end{theorem}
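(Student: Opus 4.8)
The plan is to begin from the list \eqref{eqn:cand5} of twenty-three candidate values already obtained in \S\ref{sec:necessarylist} and to discard exactly the six for which every CFT-type solution of $(\flat_s)$ is forced to be quasimodular of positive depth, namely $42/5$, $-18/5$, $-78/5$, $-138/5$, $-198/5$ and $-318/5$; the remaining seventeen are those recorded in \eqref{eqn:cand6}. Since \eqref{eqn:cand5} already captures every $s$ admitting some CFT-type solution, only the quasimodular depth of these solutions remains to be decided for each candidate.

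First I would fix, for each $s$ in \eqref{eqn:cand5}, an indicial root $\alpha$ that can serve as the exponent of a CFT-type solution. By the analysis of \S\ref{sec:cft}, for each admissible pair $(s,\alpha)$ no other indicial root exceeds $\alpha$ by a positive integer (the formal conformal weights are not positive integers), so the recursion \eqref{eqn:recursion1} determines the Fourier coefficients uniquely; hence the CFT-type solution with exponent $\alpha$, when it exists, is unique up to scalar. It therefore suffices to exhibit this solution in closed form and inspect its depth.

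To produce the closed forms I would run the construction of \S\ref{sec:construction}: set $k=-12\min\{\alpha_1,\dots,\alpha_4\}$ and let $N$ be the least common multiple of the denominators of $\alpha_1-k/12,\dots,\alpha_4-k/12$, so that $f$ solves $(\flat_s)$ if and only if $F=\eta^{2k}f$ solves $(\flat_s^k)$ with $F$ a (quasi)modular form of weight $k$ and level $N$. Because $\eta^{2k}$ is a genuine modular form, the quasimodular depth of $f$ equals that of $F$. Writing each such $F$ as a polynomial in the level-$N$ generators of Appendix~\ref{sec:modularforms}—determined from finitely many Fourier coefficients via \eqref{eqn:recursion1} and then certified by direct substitution into $(\flat_s^k)$—yields the explicit solutions tabulated in Appendices~\ref{sec:basisrational}--\ref{sec:quasimodular}. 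For the seventeen values of \eqref{eqn:cand6} at least one CFT-type $F$ is a genuine modular form, i.e.\ has depth $0$ (Appendix~\ref{sec:basisrational}); for each of the six discarded values every CFT-type $F$ involves $E_2$ nontrivially, i.e.\ is quasimodular of positive depth (Appendix~\ref{sec:quasimodular}). Combining these determinations with the reduction of the previous paragraph gives the theorem.

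The main obstacle is the sheer volume of explicit (quasi)modular computation carried out across Appendices~\ref{sec:basisrational}--\ref{sec:quasimodular}: for all twenty-three candidates one must identify $k$ and $N$, express each CFT-type solution as a polynomial in the chosen generators, verify the MLDE by substitution, and certify the depth. The one conceptual point, which guarantees that ``depth'' is an invariant of the pair $(s,\alpha)$ rather than of a choice inside a larger solution space, is the uniqueness of the CFT-type solution from \eqref{eqn:recursion1}; this rests entirely on the non-integrality of the formal conformal weights established throughout \S\ref{sec:cft}.
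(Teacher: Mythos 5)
Your proposal is correct and follows essentially the same route as the paper: starting from the $23$ candidates in~\eqref{eqn:cand5}, constructing the explicit solutions via $(\flat_s^k)$ and the level-$N$ generators of Appendix~\ref{sec:modularforms} as in \S~\ref{sec:construction}, and then reading off from Appendices~\ref{sec:basisrational}--\ref{sec:quasimodular} that exactly the six values $-318/5,-198/5,-138/5,-78/5,-18/5,42/5$ force every CFT-type solution to be quasimodular of positive depth. Your added remark that the non-integrality of the formal conformal weights (established in \S~\ref{sec:cft} for each admissible pair $(s,\alpha)$) makes the CFT-type solution with a given exponent unique, so that its depth is well defined, is exactly the point the paper leaves implicit.
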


\subsection{Modular invariant subspaces}
In this subsection we find some properties of the space of solutions of~$(\flat_s)$ which is used in
\S~\ref{sec:extension}.

A~function $f$ on $\mathbb{H}$ is called of {\em character type} if $f$ has the form
$f(\tau)=q^\beta\sum_{n=0}^\infty a_n q^n$ with $\beta\in \mathbb{Q}$,
$a_n\in\mathbb{Z}_{\geq 0}$ ($n\in\Z_{\geq0}$) and $a_0\geq 1$.

\begin{proposition}\label{sec:modularinv1}
Let $s$ be one of the numbers $-3/5,2/5,6/5,12/5,18/5,22/5,27/5,6$ and $32/5$.
Let $f$ and $g$ be linearly-independent solutions of~\eqref{eqn:fourth}
with exponents $\alpha$ and $\beta\in\mathbb{Q}$. 
Suppose that $\beta-\alpha=4/5$, $f$ is of CFT type, $g$ is of character type,
and the space spanned by $f$ and $g$ is modular invariant. 
Then $s=32/5$.
\end{proposition}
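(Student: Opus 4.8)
The plan is to reduce the statement to the classical bound on the sum of the exponents of a two\nobreakdash-dimensional modular-invariant solution space, combined with the explicit indicial roots $\alpha_1,\dots,\alpha_4$ of~\eqref{eqn:fourth} computed in~\S\ref{sec:indicial}. First I would observe that, since $f$ and $g$ are linearly independent and $\langle f,g\rangle$ is invariant under the weight-$0$ slash action of $SL_2(\Z)$, the pair $\{f,g\}$ is a fundamental system of the unique monic second-order MLDE of weight $0$ obtained as the vanishing of the Wronskian determinant of $y,f,g$. Its Wronskian $W=fD(g)-D(f)g$ transforms as a weight-$2$ meromorphic modular form (with a multiplier given by the determinant of the matrix representation of $SL_2(\Z)$ on $\langle f,g\rangle$). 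From $f=q^{\alpha}(1+O(q))$ and $g=q^{\beta}(b_0+O(q))$ with $b_0\neq0$ one reads off the leading term $W=b_0(\beta-\alpha)q^{\alpha+\beta}+\cdots$, which is nonzero because $\beta-\alpha=4/5\neq0$; hence $\operatorname{ord}_\infty W=\alpha+\beta$. As $f,g$ are holomorphic on $\mathbb{H}$, all orders of $W$ at interior points are non-negative, so the valence formula for the weight-$2$ form $W$ (equivalently the Mathur--Mukhi--Sen index formula~\cite{MMS}) gives
\[
\alpha+\beta=\frac16-\frac{\ell}{6}\leq\frac16,\qquad \ell\in\Z_{\geq0}.
\]

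Next I would pin down which indicial roots $\alpha$ and $\beta$ are. Since $f,g$ solve~\eqref{eqn:fourth} with rational exponents, $\alpha,\beta$ lie among $\alpha_1=-s/24-1/20$, $\alpha_2=-s/24+3/4$, $\alpha_3=s/24+1/4$, $\alpha_4=s/24+1/20$. Comparing all pairwise differences with $4/5$, the identity $\alpha_2-\alpha_1=4/5$ holds for every $s$, while each remaining difference equals $4/5$ only for a value of $s$ outside the list $-3/5,2/5,6/5,12/5,18/5,22/5,27/5,6,32/5$, with the single borderline case $\alpha_3-\alpha_1=4/5$ occurring within the list only at $s=6$, where $\alpha_2=\alpha_3$ and the sum is therefore unchanged. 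Consequently, for all nine listed values,
\[
\alpha+\beta=\alpha_1+\alpha_2=-\frac{s}{12}+\frac{7}{10}.
\]

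Combining the two steps, the inequality $-s/12+7/10\leq1/6$ is equivalent to $s\geq32/5$. Among the nine candidates the largest is $32/5$, so the only admissible value is $s=32/5$, for which the sum equals exactly $1/6$; here $\ell=0$ and $W$ is a nonzero multiple of $\eta^{4}$. This would establish the proposition.

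The hard part will be the rigorous justification of the exponent-sum bound in this setting rather than the arithmetic: one must verify that modular invariance of $\langle f,g\rangle$ genuinely forces $W$ to be a weight-$2$ form for $SL_2(\Z)$ (carrying the multiplier $\det$ of the representation, since $\alpha+\beta\notin\Z$), and then apply the valence formula in the presence of this multiplier, for which the normalization $\operatorname{ord}_\infty\eta=1/24$ is the model case. One should also check that permitting the minimal second-order MLDE to have singular coefficients merely increases $\ell$, so the bound $\alpha+\beta\le1/6$ can never be evaded. I note finally that the hypotheses that $f$ is of CFT type and $g$ of character type enter only to guarantee that $\alpha,\beta$ are genuine non-logarithmic exponents with nonzero leading coefficients, ensuring $\operatorname{ord}_\infty W=\alpha+\beta$; the numerical conclusion $s=32/5$ then follows from modular invariance alone.
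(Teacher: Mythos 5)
Your proof is correct, and it takes a genuinely different route from the paper's. The paper argues by contradiction case by case: for each $s\neq 32/5$ in the list it identifies $f=f_0$ and $g=c\,f_{4/5}$ from the indicial data, then uses the explicit closed-form solutions of Appendix~\ref{sec:basisrational} to compute the $S$-transform (e.g.\ $f_0(-1/\tau)$ for $s=-3/5$) and observes that it leaves the span of $f_0$ and $f_{4/5}$. Your argument replaces all of this with the uniform Wronskian bound: since $\beta-\alpha=4/5$ forces $(\alpha,\beta)=(\alpha_1,\alpha_2)$ for every listed $s$ (your check that the exceptional coincidence at $s=6$ is harmless because $\alpha_2=\alpha_3$ there is right), one gets $\alpha+\beta=-s/12+7/10$, and the constraint $\alpha+\beta\le 1/6$ yields $s\ge 32/5$, which singles out $32/5$ from the list. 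The step you flag as the "hard part" --- that modular invariance of $\langle f,g\rangle$ makes $W(F)/\eta^{24(\alpha+\beta)}$ a non-zero classical modular form of weight $2-12(\alpha+\beta)$, hence of non-negative weight --- is precisely \cite[Theorem~3.7]{Mas}, which the paper itself invokes in exactly this way in the proofs of Proposition~\ref{sec:modularinv2} and Lemma~\ref{sec:lemindex}; the multiplier issue you raise is absorbed into that statement (the character $\det\rho$ is matched by the multiplier of $\eta^{24(\alpha+\beta)}$, both being determined by their value on $T$). The trade-off: your argument is shorter, uniform in $s$, and needs only the indicial roots, whereas the paper's computation produces the explicit $S$-matrix data for the solution spaces, at the cost of relying on the explicit modular-form expressions and leaving eight of the nine cases to "a similar way."
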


\begin{proof}
We show the assertion by contradiction in case by case bases.
Suppose that $s=-3/5$.
Then the set of indices of $(\flat_{-3/5})$ is $\{31/40,9/40,1/40,-1/40\}$.
Since $\beta-\alpha=4/5$, we have $\alpha=-1/40$ and $\beta=31/40$.
Therefore, $f=f_0$ and  $g=c\cdot f_{4/5}$ with $c\in\mathbb{C}^\times$, where the solutions $f_h$
with  formal conformal weights $h$ of $(\flat_{-3/5})$ are defined in Appendix~\ref{sec:basisrational} {\bf(d)}.
By using the expression of $f_0$ in Appendix~\ref{sec:basisrational} {\bf(d)}, we can show that $f_0(-1/\tau)=\sqrt{2/5}\bigl(\sin(2\pi/5)f_0(\tau)+\sin(\pi/5)f_{4/5}(\tau)
+\sin(\pi/5)f_{1/20}(\tau)+\sin(2\pi/5)f_{1/4}(\tau)\bigr)$.
Therefore, the space spanned by $f_0$ and $f_{4/5}$ is not closed under $SL_2(\Z)$, which contradicts the assumption.
We can reduce the other cases outside $s=32/5$ to contradiction in a~similar way.
Thus we have $s=32/5$.
\end{proof}

The {\em modular Wronskian} $W(F)$ of a~vector-valued modular function
$F={}^t(f_1,\ldots,f_n)$ ($n\geq 1$) is defined by
\[
W(F)=\mathrm{det}(F,\vartheta_0(F),\vartheta_0^2(F),\ldots,\vartheta_0^{n-1}(F)).
\]

\begin{proposition}\label{sec:modularinv2}
If either $s=-48/5$ or $-38/5$, then there is no modular invariant subspace $U$ of the space of solutions of~{\rm $(\flat_s)$}
such that $U$ is spanned by functions of character type and such that a~function of CFT type belongs to $U$.
\end{proposition}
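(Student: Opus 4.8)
The plan is to pin down the possible shape of $U$ using the indicial data, and then derive a contradiction from invariance under $S\colon\tau\mapsto-1/\tau$ exactly as in the proof of Proposition~\ref{sec:modularinv1}. For $s=-48/5$ the indicial roots of $(\flat_s)$ are $7/20,\ 23/20,\ -3/20,\ -7/20$, and for $s=-38/5$ they are $4/15,\ 16/15,\ -1/15,\ -4/15$; in either case no two roots differ by a nonzero integer, so by \S\ref{sec:indicial} there is a fundamental system $f_{\alpha_1},\dots,f_{\alpha_4}$ of normalized solutions $f_{\alpha_j}(\tau)=q^{\alpha_j}(1+O(q))$ with rational Fourier coefficients, where $\alpha_1=-s/24-1/20$. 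Because the roots are pairwise distinct modulo $\Z$, the four cosets $\alpha_j+\Z$ are distinct, so the $q$-expansion of any function of character type meets exactly one of them; hence every such function is a positive scalar multiple of a single $f_{\alpha_j}$, and that $f_{\alpha_j}$ necessarily has non-negative Fourier coefficients.

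First I would use this to show that $U$ must be a coordinate subspace. If $U$ is spanned by functions of character type, the previous observation gives $U=\bigoplus_{j\in J}\C f_{\alpha_j}$ for some $J\subseteq\{1,2,3,4\}$, with $f_{\alpha_j}$ having non-negative coefficients for each $j\in J$. The hypothesised CFT-type function in $U$ is likewise a multiple of a single $f_{\alpha_j}$ with leading coefficient $1$, so it equals some $f_{\alpha_{j_0}}$ which is itself of CFT type. By the classification of \S\ref{sec:cft}, the only index whose solution can be of CFT type is $\alpha_1$: both $-48/5$ and $-38/5$ occur in~\eqref{eqn:cand1} but in none of~\eqref{eqn:cand2},~\eqref{eqn:cand3},~\eqref{eqn:cand4}. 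Therefore $j_0=1$, so $1\in J$ and $f_{\alpha_1}\in U$ is of CFT type.

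The contradiction then comes from $S$-invariance. Using the explicit fundamental systems of Appendix~\ref{sec:basisrational} (level $10$ for $s=-48/5$, level $15$ for $s=-38/5$) together with the transformation laws of Appendix~\ref{sec:modularforms}, I would write $f_{\alpha_1}(-1/\tau)=\sum_{j=1}^{4}c_j f_{\alpha_j}(\tau)$ and exhibit an index $k$ with $c_k\neq0$ whose solution $f_{\alpha_k}$ fails to be of character type, i.e.\ has a negative Fourier coefficient. Since $U$ is modular invariant it is $S$-stable, so $f_{\alpha_1}(-1/\tau)\in U=\bigoplus_{j\in J}\C f_{\alpha_j}$; comparing coefficients in the fundamental system forces $k\in J$, whence $f_{\alpha_k}$ would have non-negative coefficients — contradicting the choice of $k$. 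This is the same mechanism by which Proposition~\ref{sec:modularinv1} ruled out its candidate subspaces, where $f_0(-1/\tau)$ was shown to involve the solutions $f_{1/20}$ and $f_{1/4}$ lying outside the assumed space.

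The hard part will be the last step: one must compute the $S$-transformation matrix in the basis $\{f_{\alpha_j}\}$ from the modular-form expressions of the Appendix, which for levels $10$ and $15$ is substantially more involved than the Rogers--Ramanujan (level $5$) computation underlying Proposition~\ref{sec:modularinv1}, and then verify the two concrete facts that the relevant coefficient $c_k$ is non-zero and that the targeted solution $f_{\alpha_k}$ genuinely has a negative Fourier coefficient. Once these explicit facts are established, the logical skeleton above disposes of both values of $s$ uniformly and proves that no modular invariant $U$ spanned by functions of character type can contain a function of CFT type.
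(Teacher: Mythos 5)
Your reduction is sound and agrees with the paper's setup: since the indicial roots of $(\flat_s)$ are pairwise non-congruent modulo $\mathbb{Z}$, every character-type solution is a scalar multiple of a single normalized solution $f_{\alpha_j}$, so $U$ must be a coordinate subspace in the fundamental system; and the CFT-type member of $U$ must be the solution with exponent $\alpha_1=-s/24-1/20$ (i.e.\ $f_0$ for $s=-48/5$, resp.\ the corresponding solution for $s=-38/5$), because these two values of $s$ survive only the sieve of~\eqref{eqn:cand1} in \S~\ref{sec:cft}. The paper's proof has exactly this skeleton: for $s=-48/5$ it writes $f_0(-1/\tau)=c_1f_0+c_2f_{4/5}+c_3f_{-1/2}+c_4f_{-7/10}$ and reduces everything to showing $c_4\neq 0$, since $f_{-7/10}=q^{-7/20}(1-63q-\cdots)$ is the one basis solution no scalar multiple of which is of character type.

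However, your proposal stops exactly where the content of the proposition begins. You defer the non-vanishing of the crucial coefficient to an explicit computation of the $S$-transformation matrix at levels $10$ and $15$ --- a computation you yourself flag as substantially harder than the level-$5$ one behind Proposition~\ref{sec:modularinv1} --- and you offer no argument for why that coefficient is nonzero. The paper sidesteps this computation entirely with an indirect argument you did not supply: if $c_4=0$, then (since $\eta^{42/5}f_0$ is not a modular form on $SL_2(\mathbb{Z})$, not both $c_2$ and $c_3$ vanish) one of the subvectors $(f_0,f_{4/5},f_{-1/2})$, $(f_0,f_{-1/2})$, $(f_0,f_{4/5})$ would be a vector-valued modular function on $SL_2(\mathbb{Z})$; by \cite[Theorem~3.7]{Mas} its modular Wronskian divided by $\eta^{24\lambda}$ (with $\lambda$ the sum of the exponents of its components) would then be a nonzero classical modular form of negative weight on $SL_2(\mathbb{Z})$, which does not exist. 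Without either this Wronskian argument or the completed transformation-matrix computation, your write-up establishes only the (correct) reduction, not the proposition itself; the missing step is precisely the nontrivial part.
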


\begin{proof}
We here only show the case of $s=-48/5$.
The case of $s=-38/5$ is similarly proved.
Since $f_0,f_{4/5},f_{-1/2}$ and $f_{-7/10}$ defined in {\bf (a)} in Appendix~\ref{sec:basisrational} form a~fundamental system of solutions of~$(\flat_{-48/5})$, it follows that
\begin{equation*}
f_0(-1/\tau)\=c_1 f_0(\tau)+c_2 f_{4/5}(\tau)+c_3f_{-1/2}(\tau)+c_4f_{-7/10}(\tau)
\qquad (c_i\in\mathbb{C}).
\end{equation*}
As any scalar multiple of $f_{-7/10}$ is not of character type, it suffices to show $c_4\neq 0$.
Suppose that $c_4=0$.
Since $\eta^{42/5}f_0$ is not a~modular form (with a~multiplier system)
 on $SL_2(\mathbb{Z})$,
one of $c_2$ and $c_3$ is non-zero.
Therefore, one of
\begin{eqnarray*}
F_1:=\begin{pmatrix}f_0\\ f_{4/5}\\ f_{-1/2}\end{pmatrix}\,,\quad 
F_2:=\begin{pmatrix}f_0\\ f_{-1/2}\end{pmatrix}\,,\quad 
F_3:=\begin{pmatrix}f_0\\ f_{4/5}\end{pmatrix}
\end{eqnarray*}
is a~vector-valued modular function on $SL_2(\mathbb{Z})$.
Let $\lambda_i$ be the sum of exponents of all components of the vector $F_i$.
Since the exponents of $f_0,f_{4/5}$ and $f_{-1/2}$ are $7/20,23/20$ and $-3/20$, respectively, it follows that $\lambda_1=27/20$, $\lambda_2=1/5$ and  $\lambda_3=3/2$.
Suppose that $F_i$ is a~vector-valued modular function on $SL_2(\mathbb{Z})$.
It then follows from~\cite[Theorem~3.7]{Mas} that 
$W(F_i)/\eta^{24\lambda_i}$ is a~non-zero classical modular form on $SL_2(\mathbb{Z})$ of weight $w=6-12\lambda_i$.
Then we see that $w$ is negative, it contradicts to the fact that there does not
exist a~non-zero classical modular form on $SL_2(\mathbb{Z})$ of negative weight.
Hence, we have $c_4\neq 0$, as desired.
\end{proof}

\section{Vertex operator algebras}\label{sec:voa}

In this section we recall some properties for $\frac{1}{2}\Z_{\geq 0}$-graded VOAs.

Let $W=\bigoplus_{n\in\frac{1}{2}\mathbb{Z}_{\geq 0}}W_n$ be a~simple $\frac{1}{2}\mathbb{Z}_{\geq 0}$-graded conical VOA with central charge $c\in \C$.
Set $W^{(0)}=\bigoplus_{n\in\mathbb{Z}}W_n$ and $W^{(1)}=\bigoplus_{n\in1/2+\mathbb{Z}}W_n$.
Then there exists an~automorphism $\sigma$ of $W$ defined by
$\sigma(a)=a$ and $\sigma(v)=-v$ for each $a\in W^{(0)}$ and $v\in W^{(1)}$.
If $W^{(1)}\neq 0$, then the order of $\sigma$ is $2$,
otherwise $\sigma=\mathrm{id}_{W}$. 
We set $G=\langle \sigma \rangle$.
The VOA $W$ is called {\em $\Z_2$-rational} if $W$ is rational and $\sigma$-rational.
The $W$-module $W$ is called the {\em vacuum module}.

Suppose that $W$ is $C_2$-cofinite and $\Z_2$-rational.
The $\sigma$-twisted $W$-modules are called the {\em Ramond-twisted modules}.
It then follows that any Ramond-twisted irreducible $W$-module $M$ has the form $M=\bigoplus_{n=0}^\infty M_{n+h}$, where $h\in\mathbb{C}$ and
$M_{n+h}=\{v\in M\,|\,L_0v=(n+h)v\}$ such that $M_h\neq 0$.
The number $h$ is called the {\em conformal weight} of $M$.
If $W^{(1)}=0$, then $W$ is a~$\mathbb{Z}_{\geq 0}$-graded VOA, and then
the Ramond-twisted $W$-modules coincide with the untwisted $W$-modules.
The space spanned by the characters of the Ramond-twisted modules
is $SL_2(\mathbb{Z})$-invariant (see \cite[Theorem 1.3]{V}).

Let $M$ be a~Ramond-twisted irreducible $W$-module with a~conformal weight $h$.
The {\em character} of $M$ is defined by $\chi_M(\tau)=q^{h-c/24}\sum_{n=0}^\infty\dim M_{n+h}\,q^n$.
The exponent of $\chi_M$ is also called an~exponent of $M$.
Suppose that the conformal weight of any Ramond-twisted irreducible $W$-module is a~rational number.
The minimum number $h_{\mathrm{min}}$ among the conformal weights of all 
Ramond-twisted irreducible $W$-modules is called the {\em minimum conformal weight} of 
$W$.
The number $\tilde{c}=c-24h_{\mathrm{min}}$ is called the {\em effective central charge} of $\frac{1}{2}\Z_{\geq 0}$-graded VOA $W$ (\cite{DM1}).
For a~proof of the following lemma, refer to~\cite{Kaw}.

\begin{lemma}\label{sec:lemsuper}
Let $W$ be a~vertex operator superalgebra, $V$ a~vertex operator subalgebra
 of $W$ and suppose that $W=V\oplus P$ with a~simple current $V$-module $P$.
Let $w$ be an element of $P$ and $a$ an~integer such that 
$z^a Y(w,z)w\in W[[z]]$ and $z^aY(w,z)w|_{z=0}\neq 0$.
If $a\in2\mathbb{Z}$, then $W$ is a~vertex operator algebra.
\end{lemma}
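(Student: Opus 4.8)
The plan is to use skew-symmetry in the vertex operator superalgebra $W$ to read off the parity of $P$ from the integer $a$, and then to observe that even parity forces the odd part of $W$ to vanish. Since $W$ is a VOSA it carries a parity decomposition $W=W_{\bar0}\oplus W_{\bar1}$; as $V$ is an ordinary vertex operator algebra it is purely even, so $V\subseteq W_{\bar0}$. The complementary summand $P$ is parity-homogeneous: because the action of the even subalgebra $V$ preserves parity, both $P\cap W_{\bar0}$ and $P\cap W_{\bar1}$ are $V$-submodules of $P$, and the irreducibility of the simple current $P$ forces one of them to be zero. Let $p=p(w)\in\{0,1\}$ denote this common parity. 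Since $V\subseteq W_{\bar0}$, the statement ``$W$ is a vertex operator algebra'' is equivalent to $W_{\bar1}=0$, that is, to $p=0$; so it suffices to prove that $a\in2\mathbb{Z}$ implies $p=0$.

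To do this I would compare leading terms in the self-product of $w$. Writing $Y(w,z)w=\sum_j c_j z^{-j}$ with $c_j=w_{(j-1)}w$, the two hypotheses $z^aY(w,z)w\in W[[z]]$ and $z^aY(w,z)w|_{z=0}\neq0$ say exactly that $c_j=0$ for $j>a$ while $c_a=w_{(a-1)}w\neq0$, so $z^{-a}$ is the most singular term. Now apply the skew-symmetry identity, valid for the parity-homogeneous element $w$,
\[
Y(w,z)w=(-1)^{p}\,e^{zL_{-1}}Y(w,-z)w.
\]
On the right-hand side $Y(w,-z)w=\sum_j(-1)^jc_jz^{-j}$, and since $e^{zL_{-1}}=1+O(z)$ only raises powers of $z$, the coefficient of the most singular term $z^{-a}$ is unaffected at leading order and equals $(-1)^{p}(-1)^ac_a$; here it is essential that $c_j=0$ for $j>a$, so that no lower mode can be shifted up into degree $-a$. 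Comparing with the left-hand coefficient $c_a$ and cancelling $c_a\neq0$ yields $(-1)^{p+a}=1$, i.e. $p\equiv a\pmod2$.

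Consequently, if $a\in2\mathbb{Z}$ then $p=0$, so $P\subseteq W_{\bar0}$ and hence $W=W_{\bar0}$ is a vertex operator algebra, as claimed. The one point that demands care is the sign bookkeeping in the previous paragraph: fixing the convention $Y(u,z)v=(-1)^{p(u)p(v)}e^{zL_{-1}}Y(v,-z)u$ so that $u=v=w$ produces the factor $(-1)^{p}$, and checking that the exponential translation does not disturb the coefficient of the top singular power $z^{-a}$. The only other thing to justify, and I expect it to be routine, is that $P$ really is parity-homogeneous, which I would deduce from the irreducibility of the simple current together with the parity-preservation of the $V$-action.
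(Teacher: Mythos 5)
Your argument is correct. Note that the paper itself gives no proof of this lemma --- it only refers the reader to \cite{Kaw} --- so there is no in-text proof to compare against; but your skew-symmetry computation is the standard (and almost certainly the intended) argument: writing $Y(w,z)w=\sum_j c_jz^{-j}$ with $c_j=0$ for $j>a$ and $c_a\neq 0$, the identity $Y(w,z)w=(-1)^{p}e^{zL_{-1}}Y(w,-z)w$ compared in the top singular degree $z^{-a}$ gives $(-1)^{p+a}=1$, and the observation that $e^{zL_{-1}}$ cannot push lower modes up into degree $-a$ is exactly the point that makes the leading-coefficient comparison legitimate. The only step deserving slightly more care than you give it is the parity-homogeneity of $P$: rather than intersecting $P$ with the parity components (which presupposes that $P$ is graded), it is cleaner to note that $W_{\bar 1}$ is a $V$-submodule of $W=V\oplus P$ meeting $V$ trivially, so by irreducibility of the simple current $P$ (and $P\not\cong V$) one has $W_{\bar 1}=0$ or $W_{\bar 1}=P$; either way $w$ is parity-homogeneous and your computation applies. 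With that small adjustment the proof is complete and self-contained, which is arguably an improvement over the paper's bare citation.
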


\section{Affine vertex operator algebras and Virasoro minimal models}\label{sec:affinevirasoro}

In this section we recall the notion of affine VOAs and give a~characterization of
the Deligne exceptional series by using the MLDEs $(\sharp_s)$.
Moreover, we study extensions of the Virasoro minimal model $L(-3/5,0)$, which will be used to study $\mathcal{W}$-algebras.

\subsection{Affine vertex operator algebras and second order modular linear differential equations}\label{sec:affine}

Let $\mathfrak{g}$ be a~finite-dimensional simple Lie algebra
and $\widehat{\mathfrak{g}}$ the affine Kac-Moody Lie algebra associated with $\mathfrak{g}$.
The simple affine VOA at level $k\in\mathbb{C}$ associated with $\mathfrak{g}$
is denoted by $V_k(\mathfrak{g})$.
Let $\lambda$ be a~weight of $\mathfrak{g}$.
If the simple $\widehat{\mathfrak{g}}$-module $L(\lambda+k\Lambda_0)$ is a~$V_k(\mathfrak{g})$-module, we write $V_k(\mathfrak{g};\lambda)=L(\lambda+k\Lambda_0)$.
Recall the MLDEs of second order $(\sharp_s)$ 
and the number
$\mu(t)=t(t+2)/144$  defined in Introduction.

Let $V$ be a~$C_2$-cofinite, rational VOA of CFT type with the central charge $c\in\C$
such that the characters of the modules over $V$ satisfy~$(\sharp_s)$
and that the conformal weights of the irreducible $V$-modules are positive.
In~\cite{MMS}, it is shown that the character of $V$ coincides with
the character of $V_1(\g)$ with a~Lie algebra $\g$ in the Deligne
exceptional series and $s=\mu(c/2)$.
Moreover, the unitary RCFTs whose characters satisfy~$(\sharp_s)$ are classified in~\cite{MMS2}: they are the WZW models at level $1$ associated to
the Deligne exceptional series.
We here prove a~vertex operator algebraic analogue of this result.

\begin{theorem}\label{sec:propsecond}
Let $V=\bigoplus_{n=0}^\infty V_n$ be a~self-dual vertex operator algebra
with the central charge $c\in\C$. 
Suppose that $V$ satisfies the following 3 conditions\,{\rm:}
{\rm(M1)} $V$ is simple, $C_2$-cofinite, rational and of CFT type, 
{\rm(M2)} the characters of the modules over $V$ satisfy modular linear differential 
equation~$(\sharp_s)$ of order $2$ and weight $0$ for some $s\in\C$,
{\rm(M3)} $\dim V_1\geq 3$.
Then $V$ is isomorphic to the simple affine vertex operator algebra
at level $1$ associated with
a~Lie algebra in the Deligne exceptional series and $s=\mu(c/2)$.
\end{theorem}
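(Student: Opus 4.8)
The plan is to first extract numerical constraints from the modular differential equation $(\sharp_s)$, and then to rigidify them using the Lie-algebra structure carried by $V_1$. I expect the analytic part to be routine and the structural reduction of $V_1$ to be the real obstacle.

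First I would feed the vacuum character into $(\sharp_s)$. By Zhu's theorem \cite{Zhu} and (M2), the character $\chi_V(\tau)=q^{-c/24}(1+\dim V_1\,q+\cdots)$ is a solution of $(\sharp_s)$ of CFT type, so its exponent $-c/24$ is an indicial root. The indicial equation of $(\sharp_s)$ is $\alpha^2-\tfrac16\alpha-s=0$, so substituting $\alpha=-c/24$ gives at once $s=(-c/24)^2-\tfrac16(-c/24)=c(c+4)/576=\mu(c/2)$, which is the asserted value. Comparing the coefficient of $q^{1-c/24}$ (the $n=1$ term of the analogue of~\eqref{eqn:recursion1} for $(\sharp_s)$) then yields the Mathur--Mukhi--Sen dimension formula
\[
\dim V_1=\frac{c(5c+22)}{10-c}.
\]
Condition (M3) forces $\dim V_1\ge 3$, so $V_1\neq0$ and $0<c<10$.

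Next I would bring in the vertex-algebraic structure. Since $V$ is simple, $C_2$-cofinite, rational, self-dual and of CFT type, its weight-one space $V_1$ is a reductive Lie algebra; write $V_1=\mathfrak{a}\oplus\bigoplus_i\g_i$ with $\mathfrak{a}$ abelian and the $\g_i$ simple. Each simple ideal generates a simple affine subVOA $V_{k_i}(\g_i)$ with positive-integer level $k_i$ (rationality forces integrability), and $\mathfrak{a}$ generates a Heisenberg subVOA. The corresponding Sugawara--Heisenberg conformal vector $\omega_{\mathrm{Sug}}$ has central charge $\tilde c=\dim\mathfrak{a}+\sum_i k_i\dim\g_i/(k_i+h_i^\vee)$, and because $\omega_V-\omega_{\mathrm{Sug}}$ is a commuting conformal vector of non-negative central charge, one has $\tilde c\le c$. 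The aim is to show $\mathfrak{a}=0$, that there is a single simple ideal $\g$, that the level is $1$, and that $\tilde c=c$ (conformal embedding).

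The main obstacle is precisely this structural reduction from the single identity $\dim V_1=c(5c+22)/(10-c)$ together with $\tilde c\le c$. To kill the abelian part I would argue that if $\mathfrak{a}\neq0$ then, since a Heisenberg subVOA of a rational $C_2$-cofinite VOA is of lattice type, $V$ contains a rank-one lattice VOA, which already supplies at least two inequivalent simple modules with linearly independent characters; as these characters lie in the two-dimensional solution space of $(\sharp_s)$ alongside $\chi_V$, the exponent bookkeeping forces a contradiction, so $\mathfrak{a}=0$. A similar count of the modules forced by a product $\bigotimes_i V_{k_i}(\g_i)$ against the dimension two of the solution space rules out more than one simple factor. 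Once $V_1=\g$ is simple at level $k$, the chain $\dim\g/(1+h^\vee)\le k\dim\g/(k+h^\vee)=\tilde c\le c$ combined with the dimension formula reduces to a finite case-check over the simple types (this is essentially the computation of \cite{MMS}, \cite{MMS2}), which singles out the Deligne series with $c=\dim\g/(1+h^\vee)$; any $k\ge2$ would give $\tilde c>c$, so $k=1$ and $\tilde c=c$. Finally, for these values the two indices of $(\sharp_{\mu(c/2)})$ differ by the non-integer $(2+c)/12$, so the CFT-type solution with exponent $-c/24$ is unique and hence $\chi_V=\chi_{V_1(\g)}$; thus $\dim V_n=\dim V_1(\g)_n$ for all $n$, the conformal embedding $V_1(\g)\subseteq V$ is an equality, and $V\cong V_1(\g)$.
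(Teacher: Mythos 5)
Your overall strategy is the same as the paper's (read $s=\mu(c/2)$ and $\dim V_1=c(5c+22)/(10-c)$ off the indicial equation and the $n=1$ recursion, use reductivity of $V_1$ and integrality of the levels to embed an affine subVOA $U$, reduce to a single simple ideal at level $1$, and finish by matching characters), and the analytic part and the final character-matching step are fine. But the structural reduction --- which you correctly single out as the real obstacle --- is carried out by two arguments that do not work as stated. First, to kill the abelian part and the possibility of several simple ideals you count irreducible modules of the lattice subVOA (resp.\ of $\bigotimes_i V_{k_i}(\g_i)$) and claim their characters ``lie in the two-dimensional solution space of $(\sharp_s)$.'' They do not: $(\sharp_s)$ is satisfied by characters of \emph{$V$-modules}, and an irreducible module over a proper subVOA $U\subsetneq V$ is in general not a $V$-module, nor is its character a solution of the MLDE attached to $V$; the branching of $V$-modules over $U$ can collapse many $U$-characters into few $V$-characters. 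Second, your inequality $\tilde c_{\mathrm{Sug}}\le c$ rests on the commutant $\mathrm{Com}(U,V)$ having non-negative central charge. That is a unitarity-type statement; for merely simple, $C_2$-cofinite, rational VOAs the coset can perfectly well have negative central charge (e.g.\ minimal models such as $L(-22/5,0)$ occur as rational VOAs), so ``any $k\ge 2$ would give $\tilde c>c$'' is not justified in this generality.

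The paper closes exactly this hole by elementary dimension counting in weight $2$: the MLDE recursion \eqref{eqn:recursion1} (in its second-order form) determines $\dim V_2$ from $(\tilde c,\dim V_1)$, and for every candidate $U=\bigotimes_i V_{k_i}(\g_i)$ one computes $\dim U_2$ and uses $U_2\subseteq V_2$ to get a contradiction unless $n=1$, $k_1=1$ and $\g$ is in the Deligne series (e.g.\ for $d=28$ one has $\dim V_2=134$, while $G_2^2$ or any level $\ge 2$ forces $\dim U_2\ge 280$). It also uses the bound $\mathrm{rank}\,V_1\le\tilde c$ from Dong--Mason to make the list of candidate reductive Lie algebras of the given dimension finite in the first place --- something your ``finite case-check over the simple types'' silently assumes. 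If you replace your module-counting and central-charge comparisons with the $\dim U_2\le\dim V_2$ comparison (and import the rank bound), your argument becomes the paper's proof.
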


As we shall see in Remark~\ref{sec:mms} below, we can replace the condition (M3) in the theorem with the condition 
(M4): the conformal weights of all irreducible modules over $V$ are non-negative.
Note that the conformal weights of the non-vacuum irreducible modules over the chiral algebra (VOA)
of a~unitary RCFT are positive.
In order to prove the theorem, we first show a~lemma.

\begin{lemma}\label{sec:lemindex}
Let $L(f)=0$ be a~modular linear differential equation of order $n\leq 4$ and weight $0$.
Let $\alpha_1,\ldots,\alpha_n$ be all indices of $L(f)=0$ and suppose that $\alpha_i\in\mathbb{Q}$ 
and $\alpha_i<\alpha_j$ for $1\leq i\neq j\leq n$.
Let $f_1,\ldots,f_n$ be holomorphic functions on $\mathbb{H}$ such that $f_i$ has the form
$f_i(\tau)=q^{\alpha_i}(1+O(q))$ for $1\leq i\leq n$, and that $F={}^t(f_1,\ldots,f_n)$ forms 
a~vector-valued modular form of weight~$0$. 
Then $(f_1,\ldots,f_n)$ is a~fundamental system of solutions of $L(f)=0$.
\end{lemma}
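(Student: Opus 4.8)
The plan is to identify the given equation $L(f)=0$ with the monic modular linear differential equation canonically attached to the vector-valued modular form $F$, and then to read off that $f_1,\dots,f_n$ span its solution space. First I would record that $f_1,\dots,f_n$ are linearly independent: their $q$-expansions begin with the distinct powers $q^{\alpha_1},\dots,q^{\alpha_n}$, so any nontrivial linear relation would force its lowest-order term to cancel, which is impossible when the $\alpha_i$ are distinct. Since the solution space of $L(f)=0$ has dimension $n$, it then suffices to show that $\mathrm{span}\{f_1,\dots,f_n\}$ \emph{is} that solution space.

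The first main step builds a companion equation from $F$. Because $F={}^t(f_1,\dots,f_n)$ is a weight-$0$ vector-valued modular form with linearly independent components, its modular Wronskian $W(F)=\det(F,\vartheta_0(F),\dots,\vartheta_0^{n-1}(F))$ is nonzero, and the standard modular-Wronskian construction produces a monic weight-$0$ MLDE $\widetilde L$ of the form $\vartheta_0^n(f)+\sum_{j=2}^{n}P_j\,\vartheta_0^{n-j}(f)=0$, whose coefficients $P_j$ are ratios of modular Wronskian determinants and whose solution space is precisely $\mathrm{span}\{f_1,\dots,f_n\}$. The decisive point here is that, because $F$ is a genuine vector-valued modular form and $\vartheta_0$ is equivariant, each $P_j$ is a \emph{holomorphic} modular form of weight $2j$ (this is where the hypothesis on $F$ is used; cf.~\cite{Mas}). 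Since $f_i=q^{\alpha_i}(1+O(q))$ with the $\alpha_i$ distinct, these are $n$ solutions of $\widetilde L$ with $n$ distinct leading exponents, so the indices of $\widetilde L$ are exactly $\alpha_1,\dots,\alpha_n$, the same as those of $L$.

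The second main step is a uniqueness-by-indices argument, and it is where the bound $n\le4$ enters. Writing $L$ in the same normalized Serre form $\vartheta_0^n(f)+\sum_{j=2}^{n}Q_j\,\vartheta_0^{n-j}(f)=0$ with $Q_j\in M_{2j}(SL_2(\mathbb{Z}))$ (there is no $\vartheta_0^{n-1}$ term, as $M_2(SL_2(\mathbb{Z}))=0$), and using that $\vartheta_0^m\bigl(q^\alpha(1+O(q))\bigr)=\Bigl(\prod_{i=0}^{m-1}(\alpha-\tfrac{i}{6})\Bigr)q^\alpha(1+O(q))$, the indicial polynomial of such an equation is
\[
I(\alpha)=\prod_{i=0}^{n-1}\Bigl(\alpha-\tfrac{i}{6}\Bigr)+\sum_{j=2}^{n}c_j\prod_{i=0}^{n-j-1}\Bigl(\alpha-\tfrac{i}{6}\Bigr),
\]
where $c_j$ is the constant term of $Q_j$. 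The assignment $(c_2,\dots,c_n)\mapsto I$ is triangular with unit diagonal, so $I$ determines the constants $c_2,\dots,c_n$; and for $n\le4$ each space $M_{2j}$ (weights $4,6,8$) is one-dimensional, spanned by $E_4,E_6,E_8$, all of constant term $1$, so $c_j$ in turn determines $Q_j$ itself. Hence a monic weight-$0$ MLDE of order $n\le4$ is determined by its indicial polynomial.

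To conclude, $L$ and $\widetilde L$ are both monic weight-$0$ MLDEs of order $n\le4$ whose indicial polynomials are the same monic degree-$n$ polynomial $\prod_{i=1}^n(\alpha-\alpha_i)$ (it has the $n$ distinct roots $\alpha_1,\dots,\alpha_n$ in both cases). By the previous step $L=\widetilde L$, so the solution space of $L(f)=0$ is $\mathrm{span}\{f_1,\dots,f_n\}$, and $(f_1,\dots,f_n)$ is a fundamental system. I expect the main obstacle to be the second step of the first paragraph above, namely verifying that the modular-Wronskian coefficients $P_j$ are honest holomorphic modular forms so that $\widetilde L$ is a genuine weight-$0$ MLDE; the dimension count $\dim M_{2j}\le1$ for $2j\le8$ is elementary but is precisely the feature that breaks down once $n$ grows, explaining the restriction $n\le4$.
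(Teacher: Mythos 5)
Your overall route --- pass to the companion equation $\widetilde L$ attached to $F$ via the modular Wronskian, then identify $\widetilde L$ with $L$ by uniqueness of a monic weight-$0$ MLDE of order $n\le 4$ with a prescribed indicial polynomial --- is essentially the paper's route (the paper delegates both halves to \cite{Mas}), and your explicit triangularity plus $\dim M_{2j}\le 1$ argument for the uniqueness step is correct and usefully fills in what the paper leaves to the citation. But there is a genuine gap exactly at the point you yourself flag and then do not resolve: the holomorphy of the coefficients $P_j$ of $\widetilde L$. Equivariance of $\vartheta_0$ only makes $P_j=W_j(F)/W(F)$ a \emph{meromorphic} modular form of weight $2j$; it is holomorphic only if $W(F)$ has no zeros on $\mathbb{H}$ and $\mathrm{ord}_\infty W_j(F)\ge \mathrm{ord}_\infty W(F)$, and neither is automatic for a vector-valued modular form with distinct leading exponents. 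By \cite[Theorem~3.7]{Mas} one has $W(F)=G\cdot\eta^{24\lambda}$ with $\lambda=\sum_i\alpha_i$ and $G$ a nonzero holomorphic modular form of weight $n^2-n-12\lambda$; if that weight is positive, $G$ may vanish in $\mathbb{H}$, the $P_j$ acquire poles, $\widetilde L$ is not an MLDE with holomorphic coefficients, and your comparison with $L$ collapses.

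The missing input --- which is the entire content of the paper's own proof --- is that $\lambda=n(n-1)/12$. This comes from the hypothesis that $\alpha_1,\ldots,\alpha_n$ are \emph{all} the indices of the weight-$0$ equation $L$: the coefficient of $E_2D^{n-1}(f)$ in a monic weight-$0$ MLDE of order $n$ is $-n(n-1)/12$, so the roots of its indicial polynomial sum to $n(n-1)/12$ (you in fact compute this yourself in your second step, since the $\alpha^{n-1}$-coefficient of your $I(\alpha)$ is $-\sum_{i=0}^{n-1}i/6$). Hence $G$ has weight $0$, is a nonzero constant, $W(F)=c\,\eta^{2n(n-1)}$ is nonvanishing on $\mathbb{H}$ with the minimal possible order at $\infty$, and only then are the $P_j$ genuinely holomorphic (in particular only then does the $\vartheta_0^{n-1}$-term vanish, since $M_2=0$ applies to holomorphic forms). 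A symptom of the gap is that your argument uses the hypothesis ``the $\alpha_i$ are the indices of $L$'' only in the final comparison of indicial polynomials, whereas it is needed already to make $\widetilde L$ exist as an MLDE. Once this computation is inserted, your proof closes and agrees with the paper's.
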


\begin{proof}
Since $F$ forms a~vector-valued modular form of weight~$0$, we have a~non-zero classical 
modular form~$G$ of weight~$n^2-n-12\sum_{i=1}^n \alpha_i$ such that 
$W(F)=G\cdot \eta^{24\sum_{i=1}^n \alpha_i}$ (\cite[Theorem~3.7]{Mas})\,.
By evaluating the coefficient of $E_2D^{n-1}(f)$ in~$L(f)=0$, which is equal to $-n(n-1)/12$, 
we find that $n(n-1)/12$ is equal to the sum of all exponents.
Thus we find $n^2-n-12\sum_{i=1}^n \alpha_i=0$ so that $G$ is a~non-zero constant. 
Therefore $(f_1,\ldots,f_n)$ is a~fundamental system of solutions of~$L(f)=0$ by~\cite[Theorem~4.3]{Mas}\,.
\end{proof}

Recall that the indices of $(\sharp_{\mu(t)})$ are given by $-t/12$ and
$(t+2)/12$ (\cite{KNS}).
We now prove Theorem~\ref{sec:propsecond}.

\begin{proof}[Proof of Theorem~\ref{sec:propsecond}]
Put $d=\dim\,V_1$ and let $\tilde{c}$ be the effective central charge of $V$.
By using (M1) and (M2), we see that 
$(\tilde{c},d)=(2/5,0)$, $(2/5,1)$, $(1,1)$, $(1,3)$,
$(2,3)$, $(2,8)$, $(14/5,14)$, $(4,28)$, $(4,8)$, $(26/5,52)$, $(6,78)$, $(6,14)$,
$(7,133)$, $(38/5,190)$ or
$(8,248)$,
and $s=\mu(\tilde{c}/2)$ (\cite{MMS} and~\cite{KNS}).
The assumption (M3) forces $(\tilde{c},d)\neq (2/5,0)$, $(2/5,1)$ and $(1,1)$.
Since $V_1$ is a~reductive Lie algebra  and $\mathrm{rank}\,V_1\leq \tilde{c}$
by \cite[Theorem~1.1 and Theorem~1.2]{DM1},
we see that $V_1$ is one of the Lie algebras listed in Table~\ref{tb:table2}.

\begin{table}[hb]
\caption{Possibility of $\g$.}\label{tb:table2}
\begin{tabular}{|c|c|c|c|c|c|c|c|c|c|}
\hline
$(\tilde{c},d)$ & & $(2,8)$ & $(14/5,14)$ & & $(4,8)$ & & & &  \\
$d$ & 3 & & &28&&52&78&133&248\\
$V_1$ & $A_1$ & $A_2$ & $G_2$ & $G_2^2$, $D_4$ & $A_2$, $A_1^2\times \mathbb{C}^2$ & $F_4$ & $B_6$, $C_6$, $E_6$ & $E_7$ &$E_8$\\
\hline
\end{tabular}
\end{table}

It follows from~\cite[Theorem 1.1]{DM2} that the tensor product affine VOA $U=\bigotimes _{i=1}^n V_{k_i}(\mathfrak{g}_i)$ with 
$k_1,\ldots,k_n\in\mathbb{Z}_{>0}$ is embedded in $V$, where 
$n\in\Z_{>0}$ and $\g_1,\ldots,\g_n$ are simple ideals of $\g$ such that $\g$ decomposes into the sum $\g=\mathfrak{a}\oplus\bigoplus_{i=1}^n\g$ with an~abelian Lie algebra $\mathfrak{a}$.
By comparing $\dim\,V_2$ with $\dim\,U_2$,
we see that $n=1$, $k_1=1$ and $\mathfrak{g}$ is a~Lie algebra in the Deligne exceptional series.
(The dimension of $V_2$ can be computed by using the
MLDE $(\sharp_{\mu(\tilde{c}/2)}$).
We here only give a~proof of the case $d=28$.
The other cases are proved in a~similar way.

Since $\chi_V(\tau)$ is a~solution of CFT type of $(\sharp_{\mu(2)})$ and $\dim\,V_1=28$,
it follows that $\chi_V(\tau)=q^{\alpha}(1+28q+134q^2+O(q^3))$ with $\alpha\in\mathbb{Q}$,
and hence we have $\dim\,V_2=134$.
Suppose that $V_1\cong G_2^2$.
It then follows that $U\cong V_{k_1}(G_2)\otimes V_{k_2}(G_2)$
with $k_1,k_2\in \mathbb{Z}_{>0}$.
We see that $\dim\,U_2=280$ if $k_1=k_2=1$,
$\dim\,U_2=434$ if $k_1\neq 1$ and $k_2\neq 1$
and $\dim\,U_2=357$ otherwise,
which contradicts $\dim\,V_2=134$.
Hence, $V_1\cong D_4$.
It then follows that $U\cong V_k(D_4)$ with $k\in\mathbb{Z}_{>0}$.
Then we have $k=1$ since $\dim\,U_2=434$ otherwise.
Hence $V_1(D_4)$ is embedded in $V$.
Since the space of the characters of $V_1(D_4)$-modules is $2$-dimensional
and the exponents of $V_1(D_4)$-modules are $-1/6$ and 
$1/3$, it follows that the characters of $V_1(D_4)$
satisfy $(\sharp_{\mu(2)})$ by Lemma~\ref{sec:lemindex}. Therefore, as $V_1(D_4)\subset V$, we have
$V\cong V_1(D_4)$, and since the central charge of $V_1(D_4)$ is $4$, we see that $s=\mu(c/2)$.
\end{proof}

\begin{remark}\label{sec:mms}
(i) In~\cite{MMS} and~\cite{MMS2}, it is also proved that
a~RCFT whose characters satisfy~$(\sharp_{\mu(1/5)})$ coincides with 
the Virasoro minimal model of central charge $-22/5$.

\medskip\noindent
(ii) In Theorem~\ref{sec:propsecond}, we can replace condition (M3)  with (M4).
(The condition (M4) is stated just after Theorem~\ref{sec:propsecond}.)
It is because in the proof of Theorem~\ref{sec:propsecond}, we use 
(M3) in order to exclude the cases $(\tilde{c},d)=(2/5,0)$, $(2/5,1)$ and
$(1,1)$.
However, (M4) also excludes the cases of $(\tilde{c},d)=(2/5,0)$ and
$(1,1)$.
Since $\tilde{c}\geq \mathrm{rank}(V_1)$, we also have $(\tilde{c},d)\neq(2/5,1)$.
Hence Theorem~\ref{sec:propsecond} holds even if we replace (M3) with (M4).
\end{remark}

\subsection{Extensions of a~Virasoro minimal model}\label{sec:virasoro}

We denote by $L(-3/5,0)$ the Virasoro minimal model of central charge $-3/5$.
In this subsection we study extensions of $L(-3/5,0)$, which will be used to study
$\W$-algebras in the subsequent sections \S~\ref{sec:identification} and
\S~\ref{sec:extension}.

All irreducible $L(-3/5,0)$-modules up to the isomorphisms are given by
$
L(-3/5,0)$, $L(-3/5,-1/20)$, $L(-3/5,3/4)$ and $L(-3/5,1/5)$.
The module $L(-3/5,3/4)$ is a~simple current with the fusion products
$L(-3/5,3/4)\boxtimes L(-3/5,3/4)=L(-3/5,0)$
and $L(-3/5,3/4)\boxtimes L(-3/5,-1/20)=L(-3/5,1/5)$.

Let $W$ be a~$\frac{1}{2}\mathbb{Z}_{\geq 0}$-graded VOA of the form
$W^{(0)}=V\otimes L(-3/5,0)$ and $W^{(1)}=P\otimes L(-3/5,0)$
with a~simple, rational, $C_2$-cofinite VOA $V$ and a~simple current $V$-module $P$.
It then follows that $P\boxtimes P=V$.
Let $M$ be an~irreducible $V$-module and 
$h$ an~element of $\{0,-1/20,3/4,1/5\}$.
\begin{definition}\label{sec:def1}
Define the $V\otimes L(-3/5,0)$-modules $L(M;h)$ by
\[
L(M;h)=M\otimes L(-3/5,h)\oplus (M\boxtimes P)\otimes (L(-3/5,h)\boxtimes L(-3/5,3/4)).
\]
\end{definition}

\begin{lemma}\label{sec:3414}
Suppose that $h_{M\boxtimes P}-h_{M}\in 3/4+\mathbb{Z}$ {\rm(}resp., $1/4+\mathbb{Z}${\rm)}. Then the following hold.
{\rm{\bf(1)}}  $L(M;-1/20)$ and $L(M;3/4)$ are irreducible
Ramond-twisted $W$-modules {\rm(}resp., untwisted $W$-modules{\rm)}.
{\rm{\bf(2)}} $L(M\boxtimes P;-1/20)$ and $L(M\boxtimes P;3/4)$ are irreducible untwisted $W$-modules {\rm(}resp., Ramond-twisted $W$-modules{\rm)}.
\end{lemma}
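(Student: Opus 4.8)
The plan is to determine, for the composite $W$-modules $L(M;h)$ defined in Definition~\ref{sec:def1}, both their irreducibility and the parity of their grading (untwisted versus Ramond-twisted) directly from the tensor-product structure over $W^{(0)}=V\otimes L(-3/5,0)$. The key observation is that an irreducible $W$-module decomposes over $W^{(0)}$ into at most two $W^{(0)}$-isotypic pieces, interchanged by the action of the odd part $W^{(1)}=P\otimes L(-3/5,3/4)$; whether the module is untwisted or Ramond-twisted is governed by the $\tfrac12\mathbb Z$-grading, which in turn is detected by the conformal weights modulo $\tfrac12\mathbb Z$ of the two summands relative to each other.

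First I would recall the relevant fusion data for the minimal model already recorded just before the statement, namely that $L(-3/5,3/4)$ is a simple current with $L(-3/5,3/4)\boxtimes L(-3/5,3/4)=L(-3/5,0)$ and $L(-3/5,3/4)\boxtimes L(-3/5,-1/20)=L(-3/5,1/5)$, together with the hypothesis that $P$ is a simple current $V$-module with $P\boxtimes P=V$. Combining these, I would compute the fusion products $L(-3/5,h)\boxtimes L(-3/5,3/4)$ appearing in the second summand of each $L(M;h)$ for the two relevant values $h=-1/20$ and $h=3/4$: this identifies the $L(-3/5,\cdot)$-factor in each summand explicitly. Then I would compute the conformal weights of the two summands of each candidate module, using that the conformal weight of a tensor product is the sum of the weights of its factors, and compare them modulo $\tfrac12\mathbb Z$.

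Next, the parity dichotomy enters through the hypothesis $h_{M\boxtimes P}-h_M\in 3/4+\mathbb Z$ (resp. $1/4+\mathbb Z$). I would show that the difference of the conformal weights of the two $W^{(0)}$-summands of $L(M;h)$ equals $(h_{M\boxtimes P}-h_M)+(h_{L(-3/5,h)\boxtimes L(-3/5,3/4)}-h)$, and then evaluate the second bracket from the fusion computation. The point is that the two summands lie in the $\tfrac12\mathbb Z$-graded module with a weight gap in $\mathbb Z$ or in $\tfrac12+\mathbb Z$ according to the two hypotheses, and a standard criterion for simple-current extensions identifies a module as untwisted precisely when the relative weight gap is integral and as Ramond-twisted when it lies in $\tfrac12+\mathbb Z$ (this is where the order-two automorphism $\sigma$ from \S\ref{sec:voa} acts by $\pm1$). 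Carrying this out for $L(M;-1/20)$, $L(M;3/4)$, $L(M\boxtimes P;-1/20)$ and $L(M\boxtimes P;3/4)$ yields the stated assignment in parts {\bf(1)} and {\bf(2)}, with the resp.-clause following from swapping the roles of integral and half-integral gaps.

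The main obstacle will be the irreducibility assertion rather than the parity bookkeeping. I expect the parity to fall out cleanly from the weight computations, but to prove that each $L(M;h)$ is irreducible as a $W$-module I would need to invoke the simple-current extension machinery: since $M$ is irreducible over $V$ and $L(-3/5,h)$ is irreducible over the minimal model, each summand is irreducible over $W^{(0)}$, and the odd operators $Y(w,\cdot)$ with $w\in W^{(1)}$ must map one summand isomorphically onto the other — nonvanishing of this intertwining action is guaranteed because $P$ and $L(-3/5,3/4)$ are simple currents so the fusion with each summand is again a single irreducible $W^{(0)}$-module. The delicate point is verifying that no proper nonzero $W$-submodule exists, i.e. that the two summands are genuinely glued and not merely a direct sum of $W$-modules; this reduces to checking that the relevant intertwining operator among simple-current modules is nonzero, which follows from the associativity of fusion together with $P\boxtimes P=V$ and the minimal-model simple-current relations, so that $L(M;h)$ is an indecomposable hence irreducible simple-current extension.
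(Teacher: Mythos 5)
Your overall strategy is the same as the paper's: compute the conformal weights of the two $W^{(0)}$-summands of $L(M;h)$ from the fusion rules $L(-3/5,3/4)\boxtimes L(-3/5,-1/20)=L(-3/5,1/5)$ and $L(-3/5,3/4)\boxtimes L(-3/5,3/4)=L(-3/5,0)$, compare the weight gap modulo $\mathbb{Z}$ versus modulo $\tfrac12+\mathbb{Z}$, and read off the parity; irreducibility is then a consequence of the simple-current construction (the paper dismisses it in one line, and your more detailed discussion of why the two summands are genuinely glued is reasonable). Your reduction of part {\bf(2)} by direct recomputation also works, where the paper instead invokes $(M\boxtimes P)\boxtimes P\cong M$ to reduce {\bf(2)} to {\bf(1)}.

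However, there is a genuine error in the step that carries the entire content of the lemma: your parity criterion is stated backwards. You assert that a module is \emph{untwisted} when the weight gap between the two summands is integral and \emph{Ramond-twisted} when it lies in $\tfrac12+\mathbb{Z}$. The correct criterion is the opposite. For $a\in W^{(1)}$ one has $\mathrm{wt}(a)\in\tfrac12+\mathbb{Z}$; on an untwisted module the modes $a_n$ are indexed by $n\in\mathbb{Z}$ and shift conformal weight by $\mathrm{wt}(a)-n-1\in\tfrac12+\mathbb{Z}$, so an untwisted module spreads over two cosets of $\mathbb{Z}$ differing by $\tfrac12$; on a $\sigma$-twisted (Ramond) module the modes are indexed by $n\in\tfrac12+\mathbb{Z}$ and shift weight by integers, so all weights lie in a single coset of $\mathbb{Z}$. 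Concretely, for $L(M;-1/20)$ under the hypothesis $h_{M\boxtimes P}-h_M\in 3/4+\mathbb{Z}$, your own gap formula gives $(h_{M\boxtimes P}-h_M)+(1/5-(-1/20))=(h_{M\boxtimes P}-h_M)+1/4\in\mathbb{Z}$; with your criterion you would conclude that $L(M;-1/20)$ is untwisted, which is the negation of assertion {\bf(1)}. Since the \emph{resp.}-clause is obtained by swapping the two cases, the reversed criterion flips every conclusion of the lemma. The weight computations themselves are correct; only the final identification of ``integral gap'' with ``Ramond-twisted'' (as in the paper's proof) needs to replace your identification with ``untwisted.''
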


\begin{proof}
{\rm{\bf(1)}} Suppose $h_{M\boxtimes P}-h_{M}\in 3/4+\mathbb{Z}$.
We show that $L(M;-1/20)$ is an~irreducible Ramond-twisted $W$-module.
The other statements can be proved in a similar way.
We have $L(M;-1/20)= M\otimes L(-3/5,-1/20)\oplus (M\boxtimes P)\otimes
L(-3/5,1/5)$.
The conformal weight of the first summand is $h_M-1/20$, and that of the 
second summand is $h_{M\boxtimes P}+1/5\in h_M+19/20+\mathbb{Z}$.
Hence, the conformal weights of $L(M;-1/20)$ lie in $h_M-1/20+\mathbb{Z}$.
Therefore, $L(M;-1/20)$ is Ramond-twisted.
The irreducibility follows from the the construction of $L(M;h)$.

\noindent{\rm{\bf(2)}} Since $(M\boxtimes P)\boxtimes P\cong M$ by the associativity of the fusion products and $P\boxtimes P\cong V$, the statement follows from {\rm{\bf(1)}}.
\end{proof}

\section{Identification and $\mathcal{W}$-algebras}\label{sec:identification}

In this section we show the following theorem.

\begin{theorem}\label{sec:identification1}
If $\mathfrak{g}$ is a~Lie algebra in the Deligne exceptional series,
then the characters of the Ramond-twisted modules over the $\mathcal{W}$-algebra $\mathcal{W}_{-h^\vee/6}(\mathfrak{g},f_\theta)$ satisfy~\eqref{eqn:fourth}, where $s$ is given by~\eqref{eqn:deligne1}.
\end{theorem}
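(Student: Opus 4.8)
The plan is to prove Theorem~\ref{sec:identification1} by directly computing the characters of the Ramond-twisted $\W_{-h^\vee/6}(\g,f_\theta)$-modules and verifying that they solve $(\flat_s)$ for the value of $s$ in~\eqref{eqn:deligne1}. The key structural input is the decomposition of the minimal $\W$-algebra coming from the Drinfeld-Sokolov reduction: by the general theory of minimal $\W$-algebras (cf.~\cite{KRW,KW2}), at level $k=-h^\vee/6$ the graded pieces $W^{(0)}$ and $W^{(1)}$ take the form described in Main Condition (b), namely $W^{(0)}\cong V\otimes L(-3/5,0)$ and $W^{(1)}\cong P\otimes L(-3/5,3/4)$, with $V$ the simple affine VOA $V_1(\g)$ (or its relevant subalgebra) and $P$ a simple current module. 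The central charge $c$ of $\W_{-h^\vee/6}(\g,f_\theta)$ is a standard formula in $h^\vee$; I would record it at the outset, since the exponent $-c/24$ enters the $q$-expansion of every character.

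First I would identify the conformal weights of all the Ramond-twisted irreducible $W$-modules. Using Definition~\ref{sec:def1} and Lemma~\ref{sec:3414}, the Ramond-twisted modules arise as the $L(M;-1/20)$ and $L(M;3/4)$ (or the $L(M\boxtimes P;h)$ variants, depending on the parity of $h_{M\boxtimes P}-h_M$) built from the irreducible $V$-modules $M$. Because $V\cong V_1(\g)$ lies in the Deligne exceptional series, its modules and their conformal weights are explicitly known, and the Virasoro factor contributes the weights $-1/20$, $3/4$, $1/5$ of $L(-3/5,0)$. From these I would extract the conformal weights $h$ of the Ramond-twisted modules, hence the exponents $h-c/24$ of their characters. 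The crux is then to check that these exponents coincide with the indicial roots $\alpha_1,\ldots,\alpha_4$ of $(\flat_s)$ computed in \S\ref{sec:indicial}, with $s$ as in~\eqref{eqn:deligne1}. This is where the somewhat unusual-looking formula~\eqref{eqn:deligne1} gets pinned down: solving $\alpha_i=h_i-c/24$ for the four roots forces exactly the stated value of $s$, and I would present this matching of exponents as the central verification.

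Having matched the exponents, I would argue that the span of the Ramond-twisted characters is genuinely contained in the solution space of $(\flat_s)$. Since the space spanned by the Ramond-twisted characters is $SL_2(\Z)$-invariant (see \cite[Theorem~1.3]{V} and the discussion in \S\ref{sec:voa}) and $(\flat_s)$ is, by construction, a monic weight-$0$ MLDE of order $4$ whose indicial roots are precisely these four exponents, the standard Wronskian/indicial argument à la Lemma~\ref{sec:lemindex} applies: a four-dimensional modular-invariant space whose exponents sum correctly and match the indicial roots of an order-$4$ MLDE must coincide with the solution space of that MLDE. Concretely, I would assemble the characters into a vector-valued modular form $F$ and invoke the modular Wronskian theory (the bound on $W(F)/\eta^{24\sum\alpha_i}$ from \cite[Theorem~3.7]{Mas}) to conclude that the minimal monic MLDE annihilating $F$ has order $4$ and the indicated symbol, hence agrees with $(\flat_s)$.

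I expect the main obstacle to be the bookkeeping in the character computation rather than any single conceptual hurdle: one must correctly enumerate which $V$-modules $M$ contribute Ramond-twisted (as opposed to untwisted) $W$-modules via the parity condition $h_{M\boxtimes P}-h_M\in 3/4+\Z$ versus $1/4+\Z$ in Lemma~\ref{sec:3414}, and then verify that the resulting exponents exhaust all four indicial roots with the correct multiplicities. A secondary subtlety is confirming that the coefficient functions of $(\flat_s)$ — not merely its indicial roots — are reproduced; matching indices alone determines the equation only once one knows the order is exactly $4$ and the leading and subleading symbols are fixed, which is where the modular-invariance of the character span and the Wronskian degree count must be combined carefully. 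I would handle the case $d=28$ ($\g=E_7$, giving $\W_{-5}(E_8,f_\theta)$) explicitly as a representative computation and remark that the remaining Lie algebras in the series are treated identically.
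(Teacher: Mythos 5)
Your proposal follows essentially the same route as the paper: decompose $W^{(0)}\cong U_\g\otimes L(-3/5,0)$ and $W^{(1)}\cong P_\g\otimes L(-3/5,3/4)$ via the branching lemma (Lemma~\ref{sec:branching}), enumerate the Ramond-twisted modules $L(M;-1/20)$ and $L(M;3/4)$ case by case using Lemma~\ref{sec:3414}, match their exponents to the indicial roots of $(\flat_s)$, and conclude by the indicial/Wronskian argument of Lemma~\ref{sec:lemindex}. The one wrinkle you do not anticipate is $\g=E_8$, where the Ramond-twisted character space is only two-dimensional, so the paper first shows the characters satisfy the second-order equation $(\sharp_{\mu(19/5)})$ and then uses the factorization~\eqref{eqn:rewritten} of $(\flat_{32/5})$ through that operator; also note that $V$ is $U_\g$ (e.g.\ $V_1(E_7)$ for $\g=E_8$), not $V_1(\g)$ itself.
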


We first introduce some notations and show a lemma, which is needed to give a~proof of Theorem~\ref{sec:identification1}.

Let $\mathfrak{g}$ be a~Lie algebra in the Deligne exceptional series
and $W(\mathfrak{g})=\mathcal{W}_{-h^\vee/6}(\mathfrak{g},f_\theta)$ 
the simple $\mathcal{W}$-algebra at level $k=-h^\vee/6$ associated with
$\mathfrak{g}$ and a~minimal nilpotent element $f_\theta$ of $\mathfrak{g}$.
If $\mathfrak{g}=A_1$, then it is well known that $W(A_1)\cong L(-3/5,0)$ (see e.g.~\cite{KRW}).
Therefore, we see that the characters of $W(A_1)$ satisfy~$(\flat_{-3/5})$.

We now suppose that $\mathfrak{g}\neq A_1$.
Then $W(\mathfrak{g})$ is a~proper $\frac{1}{2}\mathbb{Z}_{\geq 0}$-graded conical VOA.
Define a~VOA $U_\mathfrak{g}$ and simple current $V$-module $P_\mathfrak{g}$ by the formula
$V_1(\mathfrak{g})\cong U_\mathfrak{g}\otimes V_1(A_1{(\theta)})\oplus P_\mathfrak{g}\otimes V_1(A_1{(\theta)};\theta/2)$,
where $\theta$ is the highest root of $\mathfrak{g}$ and $A_1{(\theta)}$ is 
the simple Lie algebra of type $A_1$ generated by the highest and lowest root vectors of $\mathfrak{g}$.

\begin{lemma}\cite{Kaw}\label{sec:branching}
The vertex operator algebra $W(\mathfrak{g})$ is $C_2$-cofinite and $\Z_2$-rational. Moreover, $W(\mathfrak{g})$ is isomorphic to the simple current extension of $U_\mathfrak{g}\otimes L(-3/5,0)$ by $P_\mathfrak{g}\otimes L(-3/5,3/4)$.
\end{lemma}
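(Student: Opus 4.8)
The plan is to dispose of $\mathfrak{g}=A_1$ separately (there $W(A_1)\cong L(-3/5,0)$ is the definition and nothing is to be proved) and, for $\mathfrak{g}\neq A_1$, to exhibit $W(\mathfrak{g})$ as an extension built from its affine subalgebra and a commuting copy of $L(-3/5,0)$. By the Kac--Roan--Wakimoto structure theorem~\cite{KRW}, the minimal $\mathcal{W}$-algebra $\mathcal{W}_{-h^\vee/6}(\mathfrak{g},f_\theta)$ is strongly generated by the currents $J^{\{a\}}$ ($a\in\mathfrak{g}^\natural$) of conformal weight $1$, the fields $G^{\{v\}}$ ($v\in\mathfrak{g}_{-1/2}$) of conformal weight $3/2$, and the conformal vector $L$ of weight $2$, where $\mathfrak{g}^\natural$ is the centralizer of the minimal $\mathfrak{sl}_2$-triple. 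The weight-$1$ and weight-$2$ generators lie in $W^{(0)}$ and the weight-$3/2$ generators in $W^{(1)}$.

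First I would produce the commuting tensor factor $L(-3/5,0)$. Writing $L^\natural$ for the Sugawara vector of the affine subalgebra $\langle J^{\{a\}}\rangle=V_{k^\natural}(\mathfrak{g}^\natural)$, the difference $\widetilde L=L-L^\natural$ is a Virasoro vector commuting with $\mathfrak{g}^\natural$, of central charge $c(\mathfrak{g},-h^\vee/6)-c^\natural$. Using the central-charge formula for minimal $\mathcal{W}$-algebras together with the Deligne-series dimension data, one checks that this central charge equals $-3/5$ for every $\mathfrak{g}$ in the series. I would then verify that $\widetilde L$ generates the simple minimal model $L(-3/5,0)$ and that the affine subalgebra is the prescribed rational VOA $U_\mathfrak{g}$: for $\mathfrak{g}\in\{G_2,D_4,F_4,E_6,E_7,E_8\}$ the algebra $\mathfrak{g}^\natural$ is semisimple, $V_{k^\natural}(\mathfrak{g}^\natural)$ is an affine VOA at a positive integer level, and one checks this level coincides with that of $\mathfrak{g}^\natural$ inside the commutant $U_\mathfrak{g}=\mathrm{Com}(V_1(A_1(\theta)),V_1(\mathfrak{g}))$ appearing in the defining branching, so that $V_{k^\natural}(\mathfrak{g}^\natural)\cong U_\mathfrak{g}$. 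This yields $W^{(0)}\cong U_\mathfrak{g}\otimes L(-3/5,0)$.

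Next I would identify $W^{(1)}$. The odd generators $G^{\{v\}}$ have conformal weight $3/2=3/4+3/4$, which forces them into the factor $L(-3/5,3/4)$ (the unique non-vacuum $L(-3/5,0)$-module of weight $3/4$) paired with a weight-$3/4$ $U_\mathfrak{g}$-module. Since $\mathfrak{g}_{1/2}$, as a $\mathfrak{g}^\natural$-module, corresponds under the level-one branching to the $U_\mathfrak{g}$-module $P_\mathfrak{g}$, and since $P_\mathfrak{g}$ is a simple current (being paired with the simple-current module $V_1(A_1(\theta);\theta/2)$) while $L(-3/5,3/4)$ is a simple current with $L(-3/5,3/4)\boxtimes L(-3/5,3/4)=L(-3/5,0)$, I would conclude $W^{(1)}\cong P_\mathfrak{g}\otimes L(-3/5,3/4)$ and that $W(\mathfrak{g})=W^{(0)}\oplus W^{(1)}$ is a simple current extension of $U_\mathfrak{g}\otimes L(-3/5,0)$; the parity of this extension is exactly what is governed by Lemma~\ref{sec:lemsuper}, which guarantees the extension is an honest $\frac{1}{2}\mathbb{Z}_{\geq 0}$-graded VOA.

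Finally, rationality and $C_2$-cofiniteness follow in two stages: $W^{(0)}\cong U_\mathfrak{g}\otimes L(-3/5,0)$ is rational and $C_2$-cofinite, being the tensor product of the rational minimal model $L(-3/5,0)$ with the rational $C_2$-cofinite VOA $U_\mathfrak{g}$, and a simple current extension by an order-two simple current is again $C_2$-cofinite and both rational and $\sigma$-rational, i.e.\ $\mathbb{Z}_2$-rational. The hardest step is the identification $V_{k^\natural}(\mathfrak{g}^\natural)\cong U_\mathfrak{g}$ together with the matching of $\mathfrak{g}_{1/2}$ with $P_\mathfrak{g}$, since this is what ties the $\mathcal{W}$-algebra at the negative fractional level $-h^\vee/6$ to the level-one affine data; it is genuinely delicate when $\mathfrak{g}^\natural$ has a centre (the case $\mathfrak{g}=A_2$), where $U_\mathfrak{g}$ is a rational lattice VOA properly containing the Heisenberg subalgebra generated by the $\mathcal{W}$-algebra currents, so the coset must be completed to its full rational form. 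For this reason I would alternatively establish $C_2$-cofiniteness and rationality first via Arakawa's theory of exceptional minimal $\mathcal{W}$-algebras~\cite{A}, and then pin down the VOA structure as the unique simple current extension realizing the prescribed characters, as carried out in~\cite{Kaw}.
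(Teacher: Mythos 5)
The paper does not prove this lemma at all: it is stated with the citation \cite{Kaw} and imported wholesale from that reference, so there is no in-paper argument to compare yours against. Judged on its own, your sketch is a faithful reconstruction of the strategy actually used in \cite{Kaw}: decompose $W(\mathfrak{g})$ via the Kac--Roan--Wakimoto generators, split off the coset Virasoro vector $\widetilde L=L-L^{\natural}$ (and your central-charge check is right --- $c(W(\mathfrak{g}))-c(U_{\mathfrak{g}})=-3/5$ in every case of the series), identify the even part with $U_{\mathfrak{g}}\otimes L(-3/5,0)$ and the odd part with $P_{\mathfrak{g}}\otimes L(-3/5,3/4)$, and then invoke simple-current-extension technology for $C_2$-cofiniteness and $\Z_2$-rationality. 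You have also correctly located the genuinely delicate steps rather than waved them away: the simplicity of the coset Virasoro (i.e.\ that $\widetilde L$ generates the \emph{simple} $(3,5)$ minimal model and not a nonsimple quotient of the universal $c=-3/5$ Virasoro VOA), the identification of the affine subalgebra generated by the weight-one currents with the rational VOA $U_{\mathfrak{g}}$ at the correct positive integral level, and the lattice completion needed when $\mathfrak{g}=A_2$ and $\mathfrak{g}^{\natural}$ is abelian. These are precisely the points where the full proof in \cite{Kaw} does real work (and where Arakawa's results enter), so your outline is an honest plan rather than a complete proof; but as a blind reconstruction of a cited external result it is on target, and the fallback you propose --- establish $C_2$-cofiniteness and rationality first and then pin down the structure as the unique simple current extension with the prescribed characters --- is exactly the route taken in the reference.
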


We denote by $\mathrm{Mod}$ and $\mathrm{Mod}_{\mathrm{tw}}$ the sets of the equivalence classes of the irreducible untwisted and Ramond-twisted modules over $W(\mathfrak{g})$, respectively.
Let $S$ be the space of characters of Ramond-twisted  $W(\mathfrak{g})$-modules
and $T$ the space of characters of $U_\mathfrak{g}$-modules.

\begin{proof}[{\bf Proof of Theorem~\ref{sec:identification1}}]
For $\g=A_2,G_2,D_4,F_4,E_6,E_7$ and $E_8$, the numbers $s$ in~\eqref{eqn:deligne1} are $s=2/5,6/5,12/5,18/5,22/5,27/5$ and $32/5$, respectively.
We show the assertion in a~case by case basis.

\medskip\noindent{{\bf 1. $\mathfrak{g}=A_2$.}}
Since $U_{A_2}=V_{\sqrt{3}A_1}$ and $P_{A_2}=V_{\sqrt{3}A_1+\sqrt{3}\alpha/2}$,
the complete set of the irreducible modules over $V_{\sqrt{3}A_1}$ up to isomorphisms is given by
$
N_k:=V_{\sqrt{3}A_1+k\sqrt{3}\alpha/6}$ with $k\in\mathbb{Z}_6$.
The conformal weights of $N_i$ ($0\leq i\leq 5$) are $0,1/12,1/3,3/4,1/3$ and $1/12$, respectively.
The space $T$ has a~basis
$(\chi_{N_i})_{0\leq i\leq 3}$.
Since  $P_{A_2}\boxtimes N_k=N_{k+3}$ for each $k\in\mathbb{Z}_6$, 
it follows by Lemma~\ref{sec:3414} that
$
\mathrm{Mod}=\{L(N_k;-1/20),L(N_k;3/4)\,|\,k\in\mathbb{Z}_6,k\mbox{ is odd}\}$ and
$\mathrm{Mod}_{\mathrm{tw}}=\{L(N_k;-1/20),L(N_k;3/4)\,|\,k\in\mathbb{Z}_6,k\mbox{ is even}\}.
$
The space $S$  has a~basis
$\chi_{L(N_0;-1/20)},\chi_{L(N_4;3/4)},\chi_{L(N_2;-1/20)},\chi_{L(N_0;3/4)}$ with the conformal weights $-1/20,1/12,17/60,3/4$, respectively.
Since the central charge of $W(A_2)$ is $2/5$, the exponents of this basis of $S$ are $-1/15, 1/15, 4/15, 11/15$.
As they correspond to the indices of MLDE~$(\flat_{2/5})$, we see that
each element of $S$ satisfies~$(\flat_{2/5})$  by Lemma~\ref{sec:lemindex}.

\medskip\noindent{{\bf 2. $\mathfrak{g}=G_2$.}}
Since $U_{G_2}=V_3(A_1)$ and $P_{G_2}=V_3(A_1;3\alpha/2)$,
the complete set of the irreducible modules over $V_3(A_1)$ up to the isomorphisms is 
given by
$
N_k:=V_3(A_1;k\alpha/2)$ with $k\in\mathbb{Z}_4.
$
The conformal weights of $N_i$ $(0\leq i\leq 3)$ are $0,3/20,2/5$ and $3/4$.
Since $P_{G_2}\boxtimes  N_k=N_{3-k}$ for each $k\in\mathbb{Z}_4$, 
it follows that
$
\mathrm{Mod}=\{L(N_k;-1/20),L(N_k;3/4)\,|\,k\in\mathbb{Z}_4,k\mbox{ is odd}\}$ and
$\mathrm{Mod}_{\mathrm{tw}}=\{L(N_k;-1/20),L(N_k;3/4)\,|\,k\in\mathbb{Z}_4,k\mbox{ is even}\}.
$
The space $S$  has a~basis
$\chi_{L(N_0;-1/20)}$, $\chi_{L(N_2;3/4)}$, $\chi_{L(N_2;-1/20)}$, 
$\chi_{L(N_0;3/4)}$ with the conformal weights $-1/20,3/20,7/20,3/4$, respectively.
Since the central charge of $W(G_2)$ is $6/5$, the exponents of the basis of $S$ are $-1/10, 1/10, 3/10, 7/10$, which shows that
each element of $S$ satisfies~$(\flat_{6/5})$  by Lemma~\ref{sec:lemindex}.

\medskip\noindent{{\bf 3. $\mathfrak{g}=D_4$.}}
Since $U_{D_4}=V_1(A_1)^{\otimes 3}$ and $P_{D_4}=V_1(A_1;\alpha/2)^{\otimes 3}$,
the complete set of the irreducible modules over $V_1(A_1)^{\otimes 3}$ up to the isomorphisms is given by
$
N_{k_1,k_2,k_3}:=V_1(A_1;k_1\alpha/2)\otimes V_1(A_1;k_2\alpha/2)\otimes V_1(A_1;k_3\alpha/2)$ with $k_1,k_2,k_3\in \mathbb{Z}_2.
$
The conformal weight of $N_{k_1,k_2,k_3}$ is $(k_1^2+k_2^2+k_3^2)/4$.
The space $T$ has a~basis $\chi_{N_{0,0,0}},\chi_{N_{1,0,0}},\chi_{N_{1,1,0}},\chi_{N_{1,1,1}}$.
Since  $P_{D_4}\boxtimes  N_{k_1,k_2,k_3}=N_{1-k_1,1-k_2,1-k_3}$ for each $k_1,k_2,k_3\in\mathbb{Z}_2$,  
it follows that
$
\mathrm{Mod}=\{L(N_{k_1,k_2,k_3};-1/20),
L(N_{k_1,k_2,k_3};3/4),\,|\,k_1,k_2,k_3\in\mathbb{Z}_2,k_1+k_2+k_3\mbox{ is odd}\}$ and
$\mathrm{Mod}_{\mathrm{tw}}=\{L(N_{k_1,k_2,k_3};-1/20),
L(N_{k_1,k_2,k_3};3/4),\,|\,k_1,k_2,k_3\in\mathbb{Z}_2,k_1+k_2+k_3\mbox{ is even}\}.
$
The space $S$  has a~basis
$\chi_{L(N_{0,0,0};-1/20)}$, $\chi_{L(N_{0,1,1};3/4)}$, $\chi_{L(N_{1,1,0};-1/20)}$, $\chi_{L(N_{0,0,0};3/4)}$ with the conformal weights $-1/20,1/4,9/20,3/4$, respectively.
Since the central charge of $W(D_4)$ is $12/5$, the exponents of the basis of $S$ are $-3/20, 3/20, 7/20, 13/20$, which shows that
each element of $S$ satisfies~$(\flat_{12/5})$ by Lemma~\ref{sec:lemindex}.

\medskip\noindent{{\bf 4. $\mathfrak{g}=F_4$.}}
Since $U_{F_4}=V_1(C_3)$ and $P_{F_4}=V_1(C_3;\varpi_3)$,
the complete set of the irreducible modules over $V_1(C_3)$ is 
given by
$
N_k:=V_1(C_3;\varpi_k)$ with $k\in\mathbb{Z}_4,
$ 
where $\varpi_0=0$.
The conformal weights of $N_k$ $(0\leq k\leq 3)$ are $0,7/20,3/5$ and $3/4$, respectively.
Since $P_{F_4}\boxtimes  N_k=N_{3-k}$ for each $k\in\mathbb{Z}_4$, 
it follows that
$
\mathrm{Mod}=\{L(N_k;-1/20),L(N_k;3/4)\,|\,k\in\mathbb{Z}_4,k\mbox{ is odd}\}$ and
$\mathrm{Mod}_{\mathrm{tw}}=\{L(N_k;-1/20),L(N_k;3/4)\,|\,k\in\mathbb{Z}_4,k\mbox{ is even}\}.
$
The space $S$  has a~basis
$\chi_{L(N_0;-1/20)}$, $\chi_{L(N_2;3/4)}$, $\chi_{L(N_2;-1/20)}$, $\chi_{L(N_0;3/4)}$ with the conformal weights $-1/20,7/20,11/20,3/4$, respectively.
Since the central charge of $W(F_4)$ is $18/5$, the exponents of the basis of $S$ are $-1/5, 1/5, 2/5, 3/5$, which shows that
each element of $S$ satisfies~$(\flat_{18/5})$ by Lemma~\ref{sec:lemindex}.

\medskip\noindent{{\bf 5. $\mathfrak{g}=E_6$.}}
Since $U_{E_6}=V_1(A_5)\cong V_{A_5}$ and $P_{E_6}=V_1(A_5;\varpi_3)\cong V_{A_5+3\varpi_1}$,
the complete set of the irreducible modules over $V_{A_5}$ is 
given by
$
N_k:=V_{A_5+k\varpi_1}$ with $0\leq k\leq 5.
$
The conformal weights of $N_i$ $(0\leq i\leq 5)$ are $0, 5/12, 2/3, 3/4, 2/3$ and  $5/12$, respectively.
The space $T$ has a~basis
$(\chi_{N_i})_{0\leq i\leq 3}$.
Since $P_{E_6}\boxtimes  N_k=N_{k+3}$ for each $k\in\mathbb{Z}_6$, 
it follows that
$
\mathrm{Mod}=\{L(N_k;-1/20),L(N_k;3/4)\,|\,k\in\mathbb{Z}_6,k\mbox{ is odd}\}$ and
$\mathrm{Mod}_{\mathrm{tw}}=\{L(N_k;-1/20),L(N_k;3/4)\,|\,k\in\mathbb{Z}_6,k\mbox{ is even}\}.
$
The space $S$  has a~basis
$\chi_{L(N_0;-1/20)}$, $\chi_{L(N_4;3/4)}$, $\chi_{L(N_2;-1/20)}$, 
$\chi_{L(N_0;3/4)}$ with the conformal weights $-1/20,5/12,37/60,3/4$, respectively.
Since the central charge of $W(E_6)$ is $22/5$, the exponents of the basis of $S$ are $-7/30, 7/30, 13/30, 17/30$, which shows that
each element of $S$ satisfies~$(\flat_{22/5})$ by Lemma~\ref{sec:lemindex}.

\medskip\noindent{{\bf 6. $\mathfrak{g}=E_7$.}}
Since $U_{E_7}=V_1(D_6)\cong V_{D_6}$ and $P_{E_7}=V_1(D_6;\varpi_6)\cong V_{D_6+\varpi_6}$,
the complete set of the irreducible modules over $V_{D_6}$ up to the isomorphisms is given by
$N_0:=V_{D_6},N_1:=V_{D_6+k\varpi_1},N_2:=V_{D_6+\varpi_5}$
and $N_3:=V_{D_6+\varpi_6}$.
The conformal weights of $N_k$ $(0\leq k\leq 3)$ are $0,1/2,3/4$ and $3/4$, respectively.
The space $T$ has a~basis
$(\chi_{N_k})_{0\leq k\leq 2}$.
Since $P_{E_7}\boxtimes  N_k=N_{3-k}$ for each $k\in\mathbb{Z}_4$, 
it follows that
$
\mathrm{Mod}=\{L(N_k;-1/20),L(N_k;3/4)\,|\,k\in\mathbb{Z}_4,k\mbox{ is odd}\}$ and
$\mathrm{Mod}_{\mathrm{tw}}=\{L(N_k;-1/20),L(N_k;3/4)\,|\,k\in\mathbb{Z}_4,k\mbox{ is even}\}.
$
The space $S$  has a~basis
$\chi_{L(N_0;-1/20)}$, $\chi_{L(N_2;3/4)}$, $\chi_{L(N_2;-1/20)}$, 
$\chi_{L(N_0;3/4)}$ with the conformal weights $(-1/20,1/2,7/10,3/4)$, respectively.
Since the central charge of $W(E_7)$ is $27/5$, the exponents of the basis of $S$ are $-11/40, 11/40, 19/40, 21/40$, which shows that
each element of $S$ satisfies~$(\flat_{27/5})$ by Lemma~\ref{sec:lemindex}.

\medskip\noindent{{\bf 7. $\mathfrak{g}=E_8$.}}
Since $U_{E_8}=V_1(E_7)\cong V_{E_7}$ and $P_{E_8}=V_1(E_7;\varpi_7)\cong V_{E_7+\varpi_7}$,
the complete set of the irreducible modules over $V_{E_7}$ is 
$
N_k:=V_{E_7+k\varpi_7}$ with $k\in\mathbb{Z}_2.
$
The conformal weights of $N_0$ and $N_1$ are $0$ and $3/4$, respectively.
Since $P_{E_8}\boxtimes  N_k=N_{1+k}$ for each $k\in\mathbb{Z}_2$, 
it follows that
$
\mathrm{Mod}=\{L(N_k;-1/20),L(N_k;3/4)\,|\,k\in\mathbb{Z}_2,k\mbox{ is odd}\}$ and
$\mathrm{Mod}_{\mathrm{tw}}=\{L(N_k;-1/20),L(N_k;3/4)\,|\,k\in\mathbb{Z}_2,k\mbox{ is even}\}.
$
The space $S$ has a~basis
$\chi_{L(N_0;-1/20)}$, $\chi_{L(N_0;3/4)}$ with the conformal weights $-1/20,3/4$, respectively.
Since the central charge of $W(E_8)$ is $32/5$, the exponents of the basis of $S$ are $-19/60, 29/60$, which shows that
each element of $S$ satisfies $(\sharp_{\mu(19/5)})$ by Lemma~\ref{sec:lemindex}.
Since the left-hand side of~$(\flat_{32/5})$ is rewritten as a~linear combination of derivatives of the left-hand side of $(\sharp_{\mu(19/5)})$ as in~\eqref{eqn:rewritten} in Appendix~\ref{sec:basisrational}, each element of $S$ satisfies~$(\flat_{32/5})$.

\medskip\noindent
These proves the statements.
\end{proof}

\begin{remark}\label{e712}
(a) If we formally set $h^\vee=24$ (the number $24$ is not the dual Coxeter number of any Lie algebra
in the Deligne exceptional series) into~\eqref{eqn:deligne1},
we see that $(\flat_6)$ also has a~solution of CFT type.
The number $h^\vee=24$ appears in many studies related with the Deligne exceptional series (see e.g.~\cite{CdM}, \cite{MMS}, \cite{LM} and \cite{Kaw1}).
A similar phenomenon is observed in the study of the second order MLDEs (see \cite{MMS} and \cite{Kaw1}).
We now explain this phenomenon.
Let $\g$ be a Lie algebra in the Deligne exceptional series.
The characters of the affine VOA $V_1(\g)$ associated with 
$\g$ satisfy
$(\sharp_{\mu(c_{\g}/2)})$, where $c_\g$ is the central charge $c_\g=\dim\g/(1+h^\vee)$ of $V_1(\g)$. 
Since the Lie algebra $\g$ in the Deligne exceptional series satisfies $\dim\g=2(5h^\vee-6)(h^\vee+1)/(h^\vee+6)$ by the {\it Deligne dimension
formula} (see e.g.~\cite{CdM} and \cite{D}), the number $c_\g$ is a rational
function in $h^\vee$.
If we formally substitute $h^\vee=24$ into the rational function $c_\g$ in $h^\vee$,
we have $c_\g=38/5$, and $(\sharp_{\mu(19/5)})$
has solutions of CFT type and character type.
A vertex algebra $V_{E_{7+1/2}}$ associated with
the intermediate Lie algebra $E_{7+1/2}$ (\cite{LM,W}) was constructed in
\cite{Kaw1}, and modular invariant characters of $V_{E_{7+1/2}}$ were shown to satisfy $(\sharp_{\mu(19/5)})$ (see~\cite{Kaw1}).
If we formally substitute $h^\vee=3/2$ into the rational functions $c_\g$ and~\eqref{eqn:deligne1},
we have $c_\g=2/5$ and $s=-6/5$.
The MLDE~$(\sharp_{\mu(1/5)})$ has a~solution of CFT type with
the exponent $-c_\g/24(=-1/60)$, and~$(\flat_{-6/5})$ has a~solution $f=1$.


\medskip
\noindent (b) Let $\mathfrak{g}$ be a~Lie algebra in the Deligne exceptional series
and $M$ a~unique Ramond-twisted irreducible $\mathcal{W}_{-h^\vee/6}(\mathfrak{g},f_\theta)$-module with the minimum conformal weight.
Since the character of $M$ is of CFT type, we can obtain for any given $n\geq 0$ a~formula for $\dim\,M_n$
in terms of the variable $h^\vee$ by using~\eqref{eqn:recursion1} with $a_0=1$ (cf.~\cite{T}).
\end{remark}

\section{Characterization of $\mathcal{W}$-algebras associated with the Deligne exceptional series}\label{sec:extension}

Recall the {\bf Main Conditions} given in Introduction.
In this section we prove the following theorem.

\begin{theorem}\label{sec:identification2}
Let $W$ be a~simple, $C_2$-cofinite, $\Z_2$-rational, $\frac{1}{2}\mathbb{Z}_{\geq 0}$-graded, conical vertex operator algebra.
Suppose that $W$ satisfies the Main Conditions.
Then $W$ is isomorphic to $\mathcal{W}_{-h^\vee/6}(\mathfrak{g},f_\theta)$ 
associated with a~Lie algebra $\mathfrak{g}\neq A_1$ in the Deligne exceptional series.
\end{theorem}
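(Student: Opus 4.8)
The plan is to realize $W$ as a simple-current extension and transport the problem to the affine classification of Theorem~\ref{sec:propsecond} via the branching of Lemma~\ref{sec:branching}. By condition (b), $W$ is the extension of $V\otimes L(-3/5,0)$ by the simple current $P\otimes L(-3/5,3/4)$, so by Lemma~\ref{sec:3414} every irreducible Ramond-twisted $W$-module has the form $L(M;-1/20)$ or $L(M;3/4)$ for an irreducible $V$-module $M$, with conformal weight read off from $h_M$ and $h_{M\boxtimes P}$. Condition (a) identifies the exponents of these modules with the indicial roots $\alpha_1,\dots,\alpha_4$ of $(\flat_s)$ and forbids non-zero integral differences among them; together with \S\ref{sec:cft} and Theorem~\ref{sec:modularcases} this restricts $s$ to the finite list \eqref{eqn:cand6}, and, through $c_W=c_V-3/5$, restricts the admissible conformal weights of the irreducible $V$-modules. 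Conditions (c) and (d) discard low-weight modules, (e) and (f) say that characters of $V$-modules and of Ramond-twisted $W$-modules are determined by their conformal weights, and (g) forces the minimal Ramond-twisted module to pair with its own dual; I would use these to compute the dimension of the space of Ramond-twisted characters exactly.

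I would then split on the number of isomorphism classes of irreducible $V$-modules, which by condition (h) governs the size of the character space. If there are exactly two---necessarily $V$ and its simple current $P$---the space of Ramond-twisted characters is two-dimensional, modular invariant, and spanned by a function of CFT type together with one of character type whose exponents differ by $4/5=\alpha_2-\alpha_1$. Propositions~\ref{sec:modularinv1} and~\ref{sec:modularinv2}, supplemented by the positivity in (c) and (d) to discard the few remaining candidates in \eqref{eqn:cand6}, force $s=32/5$ and hence $c_V=7$. Lemma~\ref{sec:lemindex} then shows that the two characters of $V$ satisfy $(\sharp_{\mu(7/2)})$, and Theorem~\ref{sec:propsecond} (whose hypotheses follow from (c), (d) and $\dim V_1\geq 3$) gives $V\cong V_1(E_7)$; the uniqueness of the simple-current extension in Lemma~\ref{sec:branching} then yields $W\cong\mathcal{W}_{-5}(E_8,f_\theta)$.

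If there are more than two irreducible $V$-modules, I would form $R=V\otimes V_1(A_1)\oplus P\otimes V_1(A_1;\alpha/2)$, the simple-current extension of $V\otimes V_1(A_1)$ by the simple current $P\otimes V_1(A_1;\alpha/2)$. Lemma~\ref{sec:lemsuper} supplies the integrality needed for $R$ to be a genuine VOA, and $R$ inherits simplicity, $C_2$-cofiniteness, rationality and CFT type from $V$ and $V_1(A_1)$. The key point is that tensoring with $V_1(A_1)$ and extending cancels the Virasoro factor $L(-3/5,0)$ carried by $W$, collapsing the four-dimensional solution space of $(\flat_s)$ to a two-dimensional, modular-invariant space of $R$-characters; using (d)--(g) to control which $V\otimes V_1(A_1)$-modules induce to $R$-modules and Lemma~\ref{sec:lemindex} to read off the indices, these characters satisfy $(\sharp_{\mu(t)})$ for some $t\in\mathbb{Q}$. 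Theorem~\ref{sec:propsecond} then gives $R\cong V_1(\mathfrak{g})$ for a Lie algebra $\mathfrak{g}$ in the Deligne exceptional series; matching the definition of $R$ against the branching $V_1(\mathfrak{g})\cong U_\mathfrak{g}\otimes V_1(A_1(\theta))\oplus P_\mathfrak{g}\otimes V_1(A_1(\theta);\theta/2)$ identifies $V\cong U_\mathfrak{g}$ and $P\cong P_\mathfrak{g}$, so that $\mathfrak{g}\neq A_1$ (as $W$ is proper) and $W\cong\mathcal{W}_{-h^\vee/6}(\mathfrak{g},f_\theta)$ by Lemma~\ref{sec:branching}.

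The hardest step is the reduction to a second-order equation in the case of more than two $V$-modules: one must establish, before knowing $R\cong V_1(\mathfrak{g})$, that the $R$-character space is exactly two-dimensional and modular invariant. This requires showing that the many irreducible $V$-modules, after tensoring with $V_1(A_1)$ and inducing to $R$, organize into the single non-vacuum character allowed by an order-two MLDE, which is precisely where conditions (e)--(g) are indispensable: they collapse distinct modules of equal conformal weight to equal characters and impose the self-dual pairing at minimal weight. Verifying the parity hypothesis of Lemma~\ref{sec:lemsuper} so that $R$ is truly a VOA, and ruling out the appearance of any further independent $R$-character, is the delicate heart of the argument.
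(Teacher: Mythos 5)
Your proposal follows essentially the same route as the paper: the same case split on whether $V$ has exactly two or more than two irreducible modules, the same use of Propositions~\ref{sec:modularinv1} and~\ref{sec:modularinv2} to force $s=32/5$ in the first case, and the same construction of $R=V\otimes V_1(A_1)\oplus P\otimes V_1(A_1;\alpha/2)$ reduced to Theorem~\ref{sec:propsecond} in the second, with Lemma~\ref{sec:branching} closing both cases. You also correctly locate the delicate point — pinning down the dimension of the character space via conditions (e)--(g) — which is exactly what the paper's Proposition~\ref{sec:vmodules} carries out.
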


In order to prove Theorem~\ref{sec:identification2}, we will show several propositions.
Let $W=\bigoplus_{n\in(1/2)\mathbb{Z}}W_n$ be a~simple, $C_2$-cofinite, 
$\Z_2$-rational, $\frac{1}{2}\mathbb{Z}_{\geq 0}$-graded, conical VOA with central charge $c\in\mathbb{C}$.
Assume that $W$ satisfies the Main Conditions.
Then we have a~complex number $s$ which satisfies~(a) in the Main Conditions.
Moreover, we have a~simple, $C_2$-cofinite, rational VOA
$V$ of CFT type,
and a~simple current $V$-module $P$ satisfying~(b).

We first give candidates of the number $s$.

\begin{proposition}\label{sec:prop1}
The number $s$ is one of $-3/5,2/5,6/5,12/5,18/5,22/5,27/5,6$ and $32/5$.
\end{proposition}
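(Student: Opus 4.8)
The plan is to combine the structural input from the Main Conditions with the arithmetic classification already established in \S~\ref{sec:cft} and Theorem~\ref{sec:modularcases}. By condition (a), the space of characters of Ramond-twisted $W$-modules lies in the solution space of some $(\flat_s)$, and by condition (b) the graded pieces $W^{(0)}$ and $W^{(1)}$ factor through the minimal model $L(-3/5,0)$ with a genuinely rational, $C_2$-cofinite VOA $V$. First I would exploit the decomposition in (b) together with Lemma~\ref{sec:3414} to produce explicit Ramond-twisted irreducible $W$-modules of the form $L(M;-1/20)$ and $L(M;3/4)$, where $M$ runs over irreducible $V$-modules. The character of each such module is, by construction, a product of a character of an $L(-3/5,0)$-module (whose $q$-expansion is explicit, since $L(-3/5,0)$ is a fixed minimal model) and a character of a $V$-module, so each is manifestly of character type. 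Hence the solution space of $(\flat_s)$ contains a function of character type.

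The key step is then to argue that at least one of these characters is actually of \emph{CFT type}, i.e. has leading coefficient exactly $1$. Here I would use the minimum conformal weight module: since $L(-3/5,0)$ contributes the weight $-1/20$ summand and $V$ is conical with one-dimensional degree-zero piece, the Ramond-twisted module $L(V;-1/20)$ built from the vacuum $V$-module has a one-dimensional lowest graded piece, so its character is of CFT type. Combined with Theorem~\ref{sec:modularcases}, if no solution of $(\flat_s)$ is quasimodular of positive depth — which I would verify holds here because the characters, being products of minimal-model and rational-VOA characters, are honest (quasi)modular objects of the expected shape — then $s$ must lie in the $17$-element list~\eqref{eqn:cand6}.

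It then remains to prune \eqref{eqn:cand6} down to the nine values listed. The strategy is to use the modular invariance of the character space (guaranteed for Ramond-twisted modules of a $C_2$-cofinite, $\Z_2$-rational VOA by~\cite{V}) together with Propositions~\ref{sec:modularinv1} and~\ref{sec:modularinv2}. Specifically, the difference between the two natural exponents coming from $L(-3/5,-1/20)$ versus $L(-3/5,3/4)$ (i.e. the shift $3/4-(-1/20)=4/5$ in formal conformal weight) matches exactly the hypothesis $\beta-\alpha=4/5$ of Proposition~\ref{sec:modularinv1}; so whenever the character space is two-dimensional, that proposition forces $s=32/5$, while for the negative values $s=-48/5,-38/5$, Proposition~\ref{sec:modularinv2} rules out the existence of the required modular-invariant character-type subspace entirely. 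The remaining values of \eqref{eqn:cand6} outside $\{-3/5,2/5,6/5,12/5,18/5,22/5,27/5,6,32/5\}$ — namely $-66/5$, $-6$, $-8/5$, and $18$ — must be excluded by checking that the indicial roots $\alpha_1,\ldots,\alpha_4$ of $(\flat_s)$ cannot be realized as the exponents of characters of the form dictated by (b), in particular that the required spacing by $4/5$ between the $-1/20$- and $3/4$-sectors is incompatible with the index differences computed in \S~\ref{sec:indicial}.

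The main obstacle I anticipate is the bookkeeping at the junction between the abstract indices of $(\flat_s)$ and the concrete exponents forced by condition (b): one must match $-24\alpha_i$ and the formal conformal weights $\alpha_i-\alpha$ against the central charge $c=\tilde{c}_V-3/5$ and the weights $\{-1/20,3/4\}$ coming from the minimal model, and then confirm that modular invariance plus conicality leaves precisely the nine tabulated values. Getting the exclusion of the four spurious values $-66/5,-6,-8/5,18$ clean — rather than by brute inspection — is the delicate part, and I expect it to rest on showing that for these $s$ the indices do not pair up with the correct $4/5$-difference that the $W^{(0)}\oplus W^{(1)}$ structure imposes.
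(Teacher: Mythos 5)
Your first half tracks the paper's proof closely and is essentially correct: condition (b) together with (c) produces the Ramond-twisted module $L(V;-1/20)$ whose character is of CFT type, the $SL_2(\Z)$-invariance of the span of Ramond-twisted characters (which follows from $\Z_2$-rationality and \cite{V}) rules out quasimodularity of positive depth so that Theorem~\ref{sec:modularcases} places $s$ in~\eqref{eqn:cand6}, and Proposition~\ref{sec:modularinv2} eliminates $-48/5$ and $-38/5$. (One caution: your remark that the characters are ``products of minimal-model and rational-VOA characters, hence honest modular objects'' is not by itself a proof of non-quasimodularity; the modular invariance of the whole character span is what does the work here, and you should lead with that.)

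The second half has two genuine gaps. First, your list of values still to be excluded is incomplete: besides $-66/5$, $-6$, $-8/5$ and $18$ you must also eliminate $-6/5$ and $54/5$, both of which lie in~\eqref{eqn:cand6} but not in the final nine-element list. Second, the mechanism you propose --- incompatibility of the $4/5$ spacing between the $-1/20$- and $3/4$-sectors with the index differences --- cannot work, because for every $s$ the indicial roots satisfy $\alpha_2-\alpha_1=(-s/24+3/4)-(-s/24-1/20)=4/5$ identically; that spacing is always present and distinguishes nothing. The paper instead uses two separate arguments. For $s=54/5,\,18,\,-66/5,\,-6$ one checks that some pair of indicial roots has a \emph{non-zero integral} difference (e.g.\ $\alpha_3-\alpha_2=s/12-1/2=\pm1$ for $s=18,-6$, and $\alpha_4-\alpha_1=s/12+1/10=\pm1$ for $s=54/5,-66/5$), which directly contradicts the second clause of condition (a). For $s=-6/5$ and $-8/5$ one uses condition (c) to obtain the coefficientwise domination $\chi_{L(V;-1/20)}\geq\chi_{L(-3/5,-1/20)}=q^{1/40}(1+q+q^2+2q^3+\cdots)$ and then checks against the explicit CFT-type solutions in Appendix~\ref{sec:basisrational} {\bf(c)} and {\bf(q)} that no such solution dominates this series. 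Without these two ingredients the pruning from~\eqref{eqn:cand6} to the nine stated values does not go through.
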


\begin{proof}
We first show that $s$ belongs to~\eqref{eqn:cand6} and that $s\neq -48/5$ nor $-38/5$.
By using (b),  we have an~irreducible Ramond-twisted $W$-module $L(V;-1/20)$
(this notation is defined in Definition~\ref{sec:def1}).
Since $c+3/5$ is the central charge of $V$, we have $c\in\Q$ (\cite{AM}).
Since $\chi_{L(V;-1/20)}(\tau)=q^{-c/24-1/20}(1+O(q))$ by (c),
the character of $L(V;-1/20)$ is of CFT type.
Since $W$ is $\Z_2$-rational, the space of characters of Ramond-twisted $W$-modules is $SL_2(\Z)$-invariant (\cite{V}), which shows that
 $\chi_{L(V;-1/20)}$ is not a~quasimodular form
of positive depth.
It then follows from Theorem~\ref{sec:modularcases} that $s$ belongs to~\eqref{eqn:cand6}.
The above $SL_2(\Z)$-invariance of characters also shows that $s\neq -48/5$ nor $-38/5$
by Proposition~\ref{sec:modularinv2}.

We finally show $s\neq -6/5,54/5,18,-66/5,-6$ nor $-8/5$.
Since the indices of $(\flat_s)$ do not have non-zero integral differences by (a),
we have $s\neq 54/5,18,-66/5$ nor $-6$.
By using (c) again, we have $\chi_{L(V;-1/20)}\geq \chi_{L(-3/5,-1/20)}$.
Here we say that two functions $f$ and $g$ of character type satisfy $f\geq g$ if $a_n\geq b_n$ for any
$n\in\Z_{\geq 0}$, where $f(\tau)=\sum_{n=0}^\infty a_n q^{n+\alpha}$ and
$g(\tau)=\sum_{n=0}^\infty b_n q^{n+\beta}$ denote the Fourier expansions of $f$
and $g$ with $a_0,b_0\neq 0$.
However, the solutions of CFT type of $(\flat_{-6/5})$ and $(\flat_{-8/5})$,
which can be found in Appendix~\ref{sec:basisrational} {\bf (c)} and {\bf (q)},
are not greater than nor equal to $\chi_{L(-3/5,-1/20)}$ because
 $\chi_{L(-3/5,-1/20)}(\tau)=q^{1/40}(1+q+q^2+2q^3+O(q))$.
Thus $s\neq -6/5$ nor $-8/5$, which completes the proof.
\end{proof}

We next construct a~VOA $R$ from $V$ and $P$.
We will later show that the characters of $R$ satisfy 
a~specific MLDE of order 2
if $s\neq 32/5$. 
(Then by Theorem~\ref{sec:identification1}, we specify $R$, which enables us to determine $V$ and $P$.)
Let $A_1$ be a~simple Lie algebra of type $A_1$ and $\alpha$ a~positive root of $A_1$.
The affine VOA $V_1(A_1)$ has a~unique non-vacuum irreducible module $V_1(A_1;\alpha/2)$.
It is well known that $V_1(A_1;\alpha/2)$ is a~simple current with $V_1(A_1;\alpha/2)\boxtimes V_1(A_1;\alpha/2)=V_1(A_1)$ and the conformal weight of $V_1(A_1;\alpha/2)$ is $1/4$.

\begin{proposition}\label{sec:scer}
There is a~unique vertex operator algebra $R$ of the form
 $R\cong V\otimes V_1(A_1)\oplus P\otimes V_1(A_1;\alpha/2)$,
 which is called the {\it simple current extension} of $V\otimes V_1(A_1)$ by $P\otimes V_1(A_1;\alpha/2)$.
\end{proposition}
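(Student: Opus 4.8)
The plan is to realize $R$ as a simple current extension and then promote it from a vertex operator superalgebra to an honest vertex operator algebra by means of Lemma~\ref{sec:lemsuper}. First I would set $U=V\otimes V_1(A_1)$ and $M=P\otimes V_1(A_1;\alpha/2)$. Since $V$ (of CFT type by the running hypotheses) and $V_1(A_1)$ are simple, $C_2$-cofinite, rational vertex operator algebras of CFT type, their tensor product $U$ is again simple, $C_2$-cofinite, rational and of CFT type; in particular $U_0=V_0\otimes (V_1(A_1))_0=\C\mathbf 1$. Because $P$ and $V_1(A_1;\alpha/2)$ are simple currents with $P\boxtimes P\cong V$ and $V_1(A_1;\alpha/2)\boxtimes V_1(A_1;\alpha/2)\cong V_1(A_1)$, the $U$-module $M$ is a simple current of order two, $M\boxtimes_U M\cong U$, and it is self-dual, $M'\cong M$ (the contragredient of a simple current is its fusion inverse).

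Next I would invoke the theory of simple current extensions to produce a vertex operator superalgebra structure on $R=U\oplus M$, with $U$ the even part and $M$ a priori the odd part, extending the module action of $U$ on $M$ by the (unique up to scalar) intertwining operator of type $\binom{U}{M\,M}$ (cf.~\cite{Kaw}). The one-dimensionality of this intertwining space for a simple current forces the extension to be unique up to isomorphism: two choices of the structure differ by a rescaling of $M$ and hence yield isomorphic algebras. This gives both existence and uniqueness of $R$ as a vertex operator superalgebra.

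The remaining, and main, point is to show that $R$ is in fact an (even) vertex operator algebra, for which I would apply Lemma~\ref{sec:lemsuper}. The conformal weight of $M$ is $h_M=h_P+1/4$. By condition (b) the subspace $W^{(1)}=P\otimes L(-3/5,3/4)$ has conformal weights in $1/2+\Z$, and since $L(-3/5,3/4)$ has weight $3/4$ this forces $h_P\in 3/4+\Z$; hence $h_M\in\Z$. Now let $w$ be a lowest-weight vector of $M$ and consider the self-product $Y(w,z)w\in U((z))$. A mode $w_nw$ has weight $2h_M-n-1$, which is negative for $n>2h_M-1$, so $w_nw=0$ there by nonnegativity of the grading of $U$; thus the pole order is at most $2h_M$. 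The most singular coefficient $w_{2h_M-1}w$ lies in $U_0=\C\mathbf 1$ and equals the value of the invariant pairing $M\times M\to\C$ coming from $M'\cong M$, which is nonzero. Therefore the integer $a$ of Lemma~\ref{sec:lemsuper} equals $a=2h_M\in 2\Z$, and the lemma yields that $R$ is a vertex operator algebra.

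The main obstacle I anticipate is the careful justification of the two structural inputs used above: the existence and uniqueness of the simple current extension as a vertex operator superalgebra (resting on the tensor-product rationality of $U$ and the one-dimensionality of the relevant fusion space), and the nonvanishing of the leading self-pairing term $w_{2h_M-1}w$, i.e.\ the self-duality $M'\cong M$ together with nondegeneracy of the induced invariant form. Once these are in place, the parity computation $a=2h_M$ is immediate from $h_M\in\Z$, and uniqueness of $R$ as a vertex operator algebra follows from its uniqueness as a vertex operator superalgebra.
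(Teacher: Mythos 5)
Your skeleton is the same as the paper's: build $R=U\oplus M$ as a simple current extension, get a super VOA structure (unique up to isomorphism) from the integrality of $h_M=h_P+1/4$ (your derivation $h_P\in 3/4+\Z$ from condition (b) is correct and matches the paper's implicit use of this fact), and then invoke Lemma~\ref{sec:lemsuper} to upgrade to a genuine VOA. The divergence — and the gap — is in how you verify that the integer $a$ of Lemma~\ref{sec:lemsuper} is even.

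You argue that the exact pole order of $Y(w,z)w$ is $2h_M$ because the top coefficient $w_{2h_M-1}w\in U_0=\C\mathbf 1$ ``equals the value of the invariant pairing coming from $M'\cong M$, which is nonzero.'' This does not hold up. First, nondegeneracy of a pairing on $M_{h_M}$ does not give $\langle w,w\rangle\neq 0$ for your chosen lowest-weight vector $w$; if the relevant form were alternating it would vanish on every vector, and even in the symmetric case you would at least need to choose $w$ anisotropic and to justify that the leading coefficient of the extension's intertwining operator of type $\binom{U}{M\,M}$ restricted to the lowest-weight space really is a nonzero multiple of the canonical nondegenerate pairing (it could a priori vanish there, pushing the actual pole order down by an odd amount). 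Second, and more fundamentally, integrality of $h_M$ alone does not decide between a commutative (VOA) and a super-commutative (proper SVOA) extension: for an order-two self-dual simple current the two possibilities differ by a sign (a self-braiding/Frobenius--Schur-type datum) that is not a function of $h_M$. If your argument were complete, Lemma~\ref{sec:lemsuper} and the entire second half of the paper's proof would be unnecessary, since one would conclude ``VOA'' directly from $h_M\in\Z$; the authors explicitly only conclude ``super VOA'' from that hypothesis.

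The paper fixes this sign by importing it from the VOA $W$, which is the input your proof never uses. Concretely: take $u\in P$ nonzero and $v$ the highest-weight vector of $L(-3/5,3/4)$; simplicity of $W$ gives $Y(u\otimes v,z)(u\otimes v)\neq 0$, and since $W$ is an honest VOA its exact pole order $a$ is even by (the converse direction of) Lemma~\ref{sec:lemsuper}. One then computes the exact pole orders of the two ``second factors'': $v(1/2)v$ is a nonzero multiple of the vacuum, so $Y(v,z)v$ has exact pole order $3/2$, while $e^{\alpha/2}(-3/2)e^{\alpha/2}$ is a nonzero multiple of $e^{\alpha}$, so $Y(e^{\alpha/2},z)e^{\alpha/2}$ has exact pole order $-1/2$. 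Dividing out the first and multiplying in the second shows $Y(u\otimes e^{\alpha/2},z)(u\otimes e^{\alpha/2})$ has exact pole order $a-3/2-1/2=a-2$, which is even, and Lemma~\ref{sec:lemsuper} applies. To repair your proof you would need either this transfer argument from $W$ or a citation to a theorem computing the relevant sign for $P$; the nondegenerate-pairing argument as written does not supply it.
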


\begin{proof}
Put $A=V\otimes V_1(A_1)$ and $B=P\otimes V_1(A_1;\alpha/2)$.
Since the conformal weight of $B$ is an~integer,
we have a~super\,VOA $R$ such that $R=A\oplus B$,
which is unique up to isomorphisms of super\,VOAs (see~\cite{C}).
Let $u$ be a~non-zero element of $P$ and $v$ a~highest weight vector
of $L(-3/5,3/4)$.
Since $W$ is simple, $w=u\otimes v\in W$ satisfies $Y(w,z)w\neq 0$.
Take $a\in\mathbb{Z}$ such that $z^a Y(w,z)w\in W[[z]]$ and $z^aY(w,z)w|_{z=0}\neq 0$.
As $W$ is a~VOA, it follows from Lemma~\ref{sec:lemsuper} that $e^{-\pi \sqrt{-1}a}=1$.
Since $v$ satisfies $v(1/2)v=k|0\rangle$ with non-zero $k\in\mathbb{C}$, 
where $|0\rangle$ is the vacuum vector of $L(-3/5,0)$, it follows that $z^{3/2} Y(v,z)v\in L(-3/5,0)[[z]]$ and $z^{3/2}Y(v,z)v|_{z=0}\neq 0$.
Therefore, the element $u$ satisfies $z^{a-3/2} Y(u,z)u\in V[[z]]$ and $z^{a-3/2}Y(u,z)u|_{z=0}\neq 0$.
A~highest weight vector $e^{\alpha/2}\in V_1(A_1;\alpha/2)$ of $\mathfrak{h}$-weight $\alpha/2$ satisfies
$e^{\alpha/2}(-3/2)e^{\alpha/2}=k_1e^\alpha$ with a~non-zero number $k_1\in\mathbb{C}$, 
where $e^\alpha$ is a~homogeneous element of $\mathfrak{h}$-weight $\alpha$.
It then follows that $z^{-1/2} Y(e^{\alpha/2},z)e^{\alpha/2}\in V_1(A_1)[[z]]$ and $z^{-1/2}Y(e^{\alpha/2},z)e^{\alpha/2}|_{z=0}\neq 0$.
Therefore, $u_1=u\otimes e^{\alpha/2}\in R$ satisfies
$z^{a-2} Y(u_1,z)u_1\in R[[z]]$ and $z^{a-2}Y(u_1,z)u_1|_{z=0}\neq 0$.
It follows from Lemma~\ref{sec:lemsuper} that
$R$ is a~VOA since $e^{-\pi\sqrt{-1}(a-2)}=1$.
\end{proof}

We next describe the irreducible modules over $V$. 
It turns out in the following proposition that the space of characters of $V$-modules is
 at most 4-dimensional, which will show that the space of characters of $R$-modules is at most $2$-dimensional.
We denote by $h_M$ the conformal weight of an~irreducible $V$-module $M$.
Let $\cA$ denote the set of isomorphism classes of irreducible $V$-modules
$M$ which satisfy $h_M\not\in\Z$ and $h_{M\boxtimes P}-h_M\in 3/4+\Z$.

\begin{proposition}\label{sec:vmodules}
The set $\{V,P\}\sqcup \cA\sqcup (\cA\boxtimes P)$
is the complete set of the isomorphism classes of irreducible $V$-modules,
where $\cA\boxtimes P=\{M\boxtimes P|M\in\cA\}$.
Moreover, for any $M,N\in\cA$, both equalities $\chi_M=\chi_N$ and $\chi_{M\boxtimes P}
=\chi_{N\boxtimes P}$ hold.
\end{proposition}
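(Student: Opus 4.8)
The plan is to reduce the classification to a statement about a single residue class modulo $\Z$, to pin down conformal weights via the indicial roots of $(\flat_s)$, and to recover the characters by inverting a $2\times2$ system of Virasoro characters. First I would record the structural facts. Since $P\boxtimes P=V$, the module $P$ is a simple current of order $2$, and its monodromy charge forces $h_{M\boxtimes P}-h_M\in\frac14+\frac12\Z$ for every irreducible $V$-module $M$; I call $M$ of \emph{type A} if $h_{M\boxtimes P}-h_M\in\frac34+\Z$ and of \emph{type B} if $h_{M\boxtimes P}-h_M\in\frac14+\Z$, the two types being interchanged by $\boxtimes P$. Here $V$ is type A and $P=V\boxtimes P$ is type B, $\cA$ consists of the type-A modules with $h_M\notin\Z$, and $\cA\boxtimes P$ of their type-B images; hence the first assertion is equivalent to the claim that $V$ is the only type-A irreducible $V$-module with integral conformal weight. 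I would also use $c=s\in\Q$, which follows from Proposition~\ref{sec:prop1} together with the fact that the exponent $-c/24-1/20$ of $L(V;-1/20)$ must be an indicial root, so that the four indices are $\alpha_1=-s/24-1/20$, $\alpha_2=-s/24+3/4$, $\alpha_3=s/24+1/4$, $\alpha_4=s/24+1/20$; by (a) these are pairwise incongruent modulo $\Z$, so each class mod $\Z$ contains at most one of them. Throughout I may assume $s>0$ (the value $s=-3/5$ gives $V=\C$ and $\cA=\varnothing$), so that $\alpha_1$ is the least index.

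For the first assertion, suppose $N$ is type A with $h_N\in\Z\setminus\{0\}$. By Lemma~\ref{sec:3414}, $L(N;-1/20)$ and $L(N;3/4)$ are irreducible Ramond-twisted $W$-modules, so by (a) their exponents are indicial roots. The two summands of $L(N;-1/20)$ have weights $h_N-1/20$ and $h_{N\boxtimes P}+1/5$, both $\equiv-1/20\pmod\Z$, so its exponent lies in the class of $\alpha_1$ and therefore equals $\alpha_1$, giving minimal weight $-1/20$. If $h_N<0$ this is already impossible (the minimal weight would be $<-1/20$, hence an exponent below $\alpha_1$); if $h_N\ge1$ it forces $h_{N\boxtimes P}=-1/4$, and then $L(N;3/4)=N\otimes L(-3/5,3/4)\oplus(N\boxtimes P)\otimes L(-3/5,0)$ has minimal weight $-1/4$ and exponent $\alpha_2-1$, which is not an indicial root---a contradiction. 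This proves the first assertion, since type A $=\{V\}\sqcup\cA$ and, applying $\boxtimes P$, type B $=\{P\}\sqcup(\cA\boxtimes P)$.

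For the second assertion, I would run the same bookkeeping on $L(M;-1/20)$ and $L(M;3/4)$ for $M\in\cA$: each exponent, being an index, constrains $h_M\bmod\Z$, and intersecting the two constraints (discarding $h_M\equiv0$, excluded since $h_M\notin\Z$) pins $h_M\equiv s/12+3/10\pmod\Z$, whence the exponent of $L(M;-1/20)$ is $\alpha_3$ and that of $L(M;3/4)$ is $\alpha_4$ for \emph{every} $M\in\cA$. By (f) all the modules $L(M;-1/20)$ then share a single character $\Phi$ and all the $L(M;3/4)$ a single character $\Psi$. Using the $W^{(0)}$-decomposition of Definition~\ref{sec:def1},
\begin{align*}
\Phi&=\chi_{M}\,\chi_{L(-3/5,-1/20)}+\chi_{M\boxtimes P}\,\chi_{L(-3/5,1/5)},\\
\Psi&=\chi_{M}\,\chi_{L(-3/5,3/4)}+\chi_{M\boxtimes P}\,\chi_{L(-3/5,0)},
\end{align*}
the matrix of Virasoro characters has determinant $\chi_{L(-3/5,-1/20)}\chi_{L(-3/5,0)}-\chi_{L(-3/5,1/5)}\chi_{L(-3/5,3/4)}=1+O(q)\ne0$; inverting it expresses $\chi_M$ and $\chi_{M\boxtimes P}$ through $\Phi,\Psi$ alone, so they are independent of $M\in\cA$, which is exactly the assertion.

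The hard part will be the uniform identification of the exponents with $\alpha_3,\alpha_4$: the residue computation always admits the solution $h_M\equiv s/12+3/10$, but for the exceptional central charges (e.g.\ $s=6/5$ or $s=6$) extra congruences among the four indicial residues appear, so the data modulo $\Z$ alone leaves a spurious second residue. I expect to close this either by sharpening to the genuine weight inequalities---using that $\alpha_1$ is the least index, as in the first assertion, to exclude the exponents $\alpha_1,\alpha_2$---or, since Proposition~\ref{sec:prop1} leaves only finitely many $s$ and $\#\cA$ is small, by reading the exponents directly off the explicit fundamental systems of $(\flat_s)$ in Appendix~\ref{sec:basisrational}. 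A related subtlety, already visible in Lemma~\ref{sec:3414}, is that the two $V$-summands of a Ramond-twisted module occupy a single $L_0$-coset, so equality of one-variable characters cannot by itself separate $\chi_M$ from $\chi_{M\boxtimes P}$; this is precisely why both $L(M;-1/20)$ and $L(M;3/4)$ must be used together, as above.
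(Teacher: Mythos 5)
Your handling of the first assertion is essentially correct (and your contradiction via ``$\alpha_2-1$ is not an indicial root'' is a clean variant of the paper's Lemma~\ref{sec:lemvac}, which instead uses condition (c)), and the $2\times2$ inversion against the Virasoro characters at the end is a valid and arguably nicer substitute for the case analysis of Lemma~\ref{sec:lemcharv}. But the step you yourself flag as ``the hard part'' is a genuine gap, and neither of your proposed repairs closes it. The problem is the uniform identification of the exponent pair of $\bigl(L(M;-1/20),L(M;3/4)\bigr)$ as $(\alpha_3,\alpha_4)$ for all $M\in\cA$. For $s=6/5$ the residue computation also admits $h_M\equiv 1/5\pmod{\Z}$, giving the pair $(\alpha_4,\alpha_1)$; for $s=18/5$ it also admits $h_M\equiv 4/5$, giving $(\alpha_2,\alpha_3)$ (since $\alpha_3-\alpha_2=s/12-1/2\equiv 4/5$ there --- a case the paper's own proof overlooks when it asserts $C_s=\{(1,2),(3,4)\}$ for $s\neq 6,6/5$). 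Your first repair, excluding $\alpha_1$ and $\alpha_2$ by weight inequalities, does not touch these pairs: e.g.\ $(\alpha_4,\alpha_1)$ at $s=6/5$ would force $r_{L(M;3/4)}=-1/20$, which nothing in the Main Conditions forbids (condition (c) bounds only $h_P$, and negative conformal weights of irreducible $V$-modules are not excluded). Your second repair, reading exponents off Appendix~\ref{sec:basisrational}, is also inconclusive: for $s=6/5$ and $s=18/5$ all four indicial roots are realized by scalar multiples of character-type solutions, so the fundamental system rules nothing out. (By contrast $s=6$ is actually harmless in your scheme, because the double root $\alpha_2=\alpha_3$ collapses the two residues to one.)

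The missing ingredient is condition (g), which appears nowhere in your proposal. In the paper's $s=6/5$ case one supposes three type-A modules realize all three admissible exponents $\alpha_3,\alpha_4,\alpha_1$ for $L(\,\cdot\,;-1/20)$; then two distinct irreducible Ramond-twisted modules attain the minimal conformal weight, and (g) forces a $W^{(0)}$-summand of the form $N\otimes L(-3/5,-1/20)$ to be isomorphic to one of $N'\otimes L(-3/5,3/4)$, $(N'\boxtimes P)\otimes L(-3/5,0)$ or their restricted duals, contradicting $L(-3/5,0)'\cong L(-3/5,0)$ and $L(-3/5,3/4)'\cong L(-3/5,3/4)$. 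Some input of this strength is unavoidable, and it would also have to be checked at $s=18/5$. Two smaller points: the dichotomy $h_{M\boxtimes P}-h_M\in\tfrac14+\tfrac12\Z$ is not a consequence of $P\boxtimes P=V$ and ``monodromy charge'' alone --- the paper derives Lemma~\ref{sec:lemcw}(I) from condition (h); and $c=s$ requires the short extra observation that the ordered exponent pair of $\bigl(L(V;-1/20),L(V;3/4)\bigr)$ can only be $(\alpha_1,\alpha_2)$, since $\alpha_4<\alpha_3$.
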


To show the proposition, we will prove several lemmas.

%
%
We write the conformal weight of each irreducible Ramond-twisted $W$-module $L$ as $r_L$.

\begin{lemma}\label{sec:lemcw}
{\rm(I)} For any irreducible $V$-module $M$, the conformal weights of $M$ and $M\boxtimes P$ satisfy either
$h_{M\boxtimes P}-h_{M}\in3/4+\mathbb{Z}$ or $1/4+\mathbb{Z}$.
{\rm(II)}
Let $M$ be an~irreducible $V$-module such that $h_{M\boxtimes P}-h_{M}\in3/4+\mathbb{Z}$. Then the following assertions {\rm (1)--(6)} hold:
{\rm(1)}~If $h_{M\boxtimes P}-h_M\geq -1/4$, then $r_{L(M;-1/20)}=h_M-1/20$.
{\rm(2)}~If $h_{M\boxtimes P}-h_M\leq -1/4$, then $r_{L(M;-1/20)}=h_{M\boxtimes P}+1/5$.
{\rm(3)}~If $h_{M\boxtimes P}-h_M\geq 3/4$, then $r_{L(M;3/4)}=h_M+3/4$.
{\rm(4)}~If $h_{M\boxtimes P}-h_M\leq 3/4$, then $r_{L(M;3/4)}=h_{M\boxtimes P}$.
{\rm(5)}~$r_{L(M;3/4)}-r_{L(M;-1/20)}\in 4/5+\Z$.
{\rm(6)}~$r_{L(V;-1/20)}=-1/20$ and $r_{L(V;3/4)}=3/4$.
\end{lemma}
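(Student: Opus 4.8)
The plan is to prove Lemma~\ref{sec:lemcw} by directly analysing the conformal weights that occur in the modules $L(M;h)$ of Definition~\ref{sec:def1}, using the known data of the Virasoro minimal model $L(-3/5,0)$ recalled in \S~\ref{sec:virasoro}. The key numerical inputs are the conformal weights $0,-1/20,3/4,1/5$ of the four irreducible $L(-3/5,0)$-modules together with the fusion rules $L(-3/5,3/4)\boxtimes L(-3/5,3/4)=L(-3/5,0)$ and $L(-3/5,3/4)\boxtimes L(-3/5,-1/20)=L(-3/5,1/5)$.

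For part~(I), I would observe that $P\boxtimes P\cong V$, so $M\boxtimes P\boxtimes P\cong M$ and the simple current $P$ has order dividing $2$ on the fusion group; combined with the fact that the conformal weight of $P$ equals $3/4$ (condition (b) gives $W^{(1)}\cong P\otimes L(-3/5,3/4)$, and $W$ is $\frac12\Z_{\geq 0}$-graded so $h_P+3/4\in\frac12+\Z$), a short computation with the balancing of conformal weights under fusion by a simple current forces $h_{M\boxtimes P}-h_M$ to lie in $\frac14+\Z$, and then the residue mod~$1$ must be either $3/4$ or $1/4$. For part~(II), assertions~(1)--(4) follow by writing out $L(M;-1/20)$ and $L(M;3/4)$ as in the proof of Lemma~\ref{sec:3414}: each is a direct sum of two tensor-product summands whose conformal weights I can read off (e.g. $L(M;-1/20)=M\otimes L(-3/5,-1/20)\oplus (M\boxtimes P)\otimes L(-3/5,1/5)$, with summand weights $h_M-1/20$ and $h_{M\boxtimes P}+1/5$), and the conformal weight $r_L$ of the whole module is the minimum of the two summand weights. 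The case splits in~(1)--(4) are exactly the inequalities deciding which summand weight is smaller, so each assertion is immediate once I track which of $h_M-1/20$ and $h_{M\boxtimes P}+1/5$ (respectively $h_M+3/4$ and $h_{M\boxtimes P}$) is the minimum.

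Assertion~(5) I would derive by combining~(1)--(4): in each of the regimes determined by the size of $h_{M\boxtimes P}-h_M$, I can compute both $r_{L(M;3/4)}$ and $r_{L(M;-1/20)}$ explicitly and check that their difference lands in $4/5+\Z$; this amounts to verifying the arithmetic $3/4-(-1/20)=4/5$ and $0-1/5=-1/5\equiv 4/5 \pmod 1$ in the relevant branches, using the hypothesis $h_{M\boxtimes P}-h_M\in 3/4+\Z$. Assertion~(6) is the special case $M=V$: here $M\boxtimes P=P$, and $r_{L(V;-1/20)}=-1/20$, $r_{L(V;3/4)}=3/4$ follow by observing that the minimal-weight summand comes from $V\otimes L(-3/5,-1/20)$ and $V\otimes L(-3/5,3/4)$ respectively, since the vacuum $V$ has conformal weight $0$ while $h_P\geq 0$ by condition~(c) and $h_P-h_V\in 3/4+\Z$ gives $h_P\geq 3/4>0$, so the $V$-summand always dominates.

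The main obstacle I anticipate is assertion~(6), and more precisely the need to be sure that for $M=V$ the minimum in the branch analysis is genuinely realised by the $V$-tensor summand rather than by $(M\boxtimes P)=P$. This is where condition~(c) (nonnegativity of $h_P$) is essential: without it one could not rule out a negative contribution $h_P+1/5$ undercutting $-1/20$, and the identification of the minimum conformal weights $-1/20$ and $3/4$ would fail. I would therefore be careful to invoke~(c) explicitly and to confirm the inequality $h_P+1/5\geq -1/20$, i.e. $h_P\geq -1/4$, which holds a fortiori. The remaining steps are routine bookkeeping of the four minimal-model weights through the fusion rules.
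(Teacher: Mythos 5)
Your treatment of part (II) is essentially the paper's: all of (1)--(5) are read off from the formulas $r_{L(M;-1/20)}=\min\{h_M-1/20,\,h_{M\boxtimes P}+1/5\}$ and $r_{L(M;3/4)}=\min\{h_M+3/4,\,h_{M\boxtimes P}\}$, and your arithmetic check for (5) in the two regimes ($4/5$ versus $-1/5\equiv 4/5 \bmod 1$) is exactly what is needed. For (6) you argue from condition (c) together with $h_P\in 3/4+\mathbb{Z}$ (forced by the half-integral grading of $W^{(1)}$), hence $h_P\geq 3/4$; the paper attributes (6) to condition (e), but it is indeed (c) that makes both minima land on the $V$-tensor summand, so your version is the more transparent one. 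The genuine divergence is in part (I): the paper deduces it in one line from condition (h) --- an irreducible $V$-module sits inside an untwisted or Ramond-twisted $W$-module, whose $L_0$-spectrum lies in a single coset of $\mathbb{Z}$, and the weights $0,-1/20,3/4,1/5$ of the $L(-3/5,0)$-modules shift by $\pm 3/4$ or $\pm 1/4$ modulo $\mathbb{Z}$ under fusion with $L(-3/5,3/4)$ --- whereas you invoke the monodromy-charge quantization for the order-two simple current $P$, namely $h_{M\boxtimes P}-h_M-h_P\in\tfrac12\mathbb{Z}\pmod{\mathbb{Z}}$, combined with $h_P\in 3/4+\mathbb{Z}$. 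Your route avoids condition (h) entirely, which is a real structural difference and arguably more conceptual, but it costs you the braided tensor category with twist on the category of $V$-modules (Huang--Lepowsky--Zhang machinery, whose applicability to $V$ is not discussed at this stage of the paper) together with the balancing computation showing that $c_{M,P}c_{P,M}$ squares to the identity; the paper's appeal to (h) is elementary given the Main Conditions, which is presumably why (h) is listed among them. Finally, a small slip: the intermediate conclusion should be that $h_{M\boxtimes P}-h_M$ lies in $h_P+\tfrac12\mathbb{Z}=(1/4+\mathbb{Z})\cup(3/4+\mathbb{Z})$, not in ``$\tfrac14+\mathbb{Z}$'' as you wrote, which by itself would exclude the residue $3/4$ that you need.
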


\begin{proof}
The assertion (I) follows from condition (h).
Since $r_{L(M;-1/20)}=\min\{h_M-1/20,h_{M\boxtimes P}+1/5\}$
and $r_{L(M;3/4)}=\min\{h_M+3/4,h_{M\boxtimes P}\}$,
we have assertions (1)--(5).
The assertion (6) follows from (e).
\end{proof}

\begin{lemma}\label{sec:lemvac}
Let $M$ be an~irreducible $V$-module such that $h_{M\boxtimes P}-h_M\in3/4+\mathbb{Z}$.
If $h_M\in\mathbb{Z}$, then $M$ is isomorphic to $V$.
\end{lemma}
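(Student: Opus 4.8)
The plan is to reduce the whole statement to the single numerical claim $h_M=0$. Once that is established, condition (d) finishes the argument: the only irreducible $V$-module of conformal weight $0$ is the vacuum module $V$ (every other one has nonzero conformal weight), so $M\cong V$. Thus the real task is to pin down the conformal weight of $M$. As a first step I would record the two Ramond-twisted modules attached to $M$: since $h_{M\boxtimes P}-h_M\in 3/4+\Z$, Lemma~\ref{sec:3414} makes $L(M;-1/20)$ and $L(M;3/4)$ irreducible Ramond-twisted $W$-modules, and Lemma~\ref{sec:lemcw} supplies the closed formulas $r_{L(M;-1/20)}=\min\{h_M-1/20,\,h_{M\boxtimes P}+1/5\}$ and $r_{L(M;3/4)}=\min\{h_M+3/4,\,h_{M\boxtimes P}\}$, together with $r_{L(V;-1/20)}=-1/20$ and $r_{L(V;3/4)}=3/4$ from Lemma~\ref{sec:lemcw}(6). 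Writing $h=h_M\in\Z$ and $h'=h_{M\boxtimes P}\in 3/4+\Z$, both candidates inside the first $\min$ are $\equiv -1/20\pmod{\Z}$ and both inside the second are $\equiv 3/4\pmod{\Z}$, so $r_{L(M;-1/20)}\equiv -1/20$ and $r_{L(M;3/4)}\equiv 3/4$ modulo $\Z$.

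The key step, and the one I expect to be the main obstacle, is upgrading these easy congruences to exact equalities by invoking condition (a). By (a) the character of every Ramond-twisted $W$-module solves $(\flat_s)$, so its exponent is one of the four indices $\alpha_1,\ldots,\alpha_4$; and since no two of these indices differ by a nonzero integer, each residue class modulo $\Z$ contains at most one index. Now the exponent of a Ramond-twisted module $L$ is $r_L-c/24$, so the exponents of $L(M;-1/20)$ and of $L(V;-1/20)$ share the common shift $-c/24$ and, by the previous paragraph, lie in the same residue class mod $\Z$; being both indices, they must coincide, which forces $r_{L(M;-1/20)}=r_{L(V;-1/20)}=-1/20$. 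The identical comparison applied to $L(M;3/4)$ and $L(V;3/4)$ yields $r_{L(M;3/4)}=3/4$. The whole delicacy here is that condition (a)'s no-integral-difference hypothesis is exactly what converts the trivial mod-$\Z$ information into these two exact values, using that $L(V;-1/20)$ and $L(V;3/4)$ realize the indices themselves.

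Finally I would feed the two exact values back into the $\min$ formulas. From $\min\{h+3/4,\,h'\}=3/4$ I read off $h\ge 0$ and $h'\ge 3/4$; hence $h'+1/5\ge 19/20>-1/20$, so the remaining equation $\min\{h-1/20,\,h'+1/5\}=-1/20$ can only be satisfied through its first argument, giving $h-1/20=-1/20$, that is $h_M=0$. Condition (d) then gives $M\cong V$, as desired. After the residue-class matching of the second paragraph, this last deduction is a short inequality chase and should present no difficulty.
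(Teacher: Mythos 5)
Your proof is correct and follows essentially the same strategy as the paper: use condition (a) to force $r_{L(M;k)}=r_{L(V;k)}$ for $k=-1/20,3/4$, and then extract $h_M=0$ from the two $\min$ formulas of Lemma~\ref{sec:lemcw}. Your middle step (both exponents are indicial roots lying in the same residue class mod $\mathbb{Z}$, hence equal by (a)) is a slightly more direct version of the paper's contradiction argument via linear independence and logarithmic solutions, and your closing inequality chase replaces the paper's reductio from $h_M\in\mathbb{Z}_{>0}$, but the substance is identical.
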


\begin{proof}
If $h_M=0$, then $M\cong V$ by (d).
We show $h_M=0$ by contradiction.
Suppose on the contrary that $h_M\in \Z_{>0}$ and
let $k$ be an~element of $\{-1/20,3/4\}$.
Since both $L(M;k)$ and $L(V;k)$ are Ramond-twisted by Lemma~\ref{sec:3414}, the number $r_{L(M;k)}-r_{L(V;k)}$ is an~integer.

We now show $r_{L(M;k)}=r_{L(V;k)}$ by contradiction.
Suppose that $r_{L(M;k)}\neq r_{L(V;k)}$, which implies that $\chi_{L(M;k)}$ and $\chi_{L(V;k)}$ are linearly-independent.
Then we have $i,j\in\{1,\ldots,4\}$ such that
$i\neq j$, $\alpha_i-\alpha_j\in\Z$ and $r_{L(V;k)}\in\alpha_i+\Z$.
Here $\alpha_1,\ldots,\alpha_4$ are the indices of $(\flat_s)$.
It follows by (a) that $\alpha_i=\alpha_j$.
Since $(\flat_s)$ has at most one pair of multiple indicial roots,
we have a~solution with a~logarithmic term, and the space
spanned by the solutions of $(\flat_s)$ of character type whose exponents belong to 
$\alpha_i+\Z$ is one-dimensional.
It contradicts the linearly-independence of $\chi_{L(M;k)}$ and $\chi_{L(V;k)}$.
Hence, $r_{L(M;k)}= r_{L(V;k)}$.

The equality $r_{L(M;-1/20)}=r_{L(V;-1/20)}$, Lemma~\ref{sec:lemcw} and $h_M\neq 0$ show
 that $h_{M\boxtimes P}+1/5=-1/20$ and $h_{M\boxtimes P}-h_M<-1/4$.
Therefore, $h_{M\boxtimes P}=-1/4$ and $h_M>0$.
By using Lemma~\ref{sec:lemcw} again, we have $r_{L(M;3/4)}=-1/4$.
As $r_{L(V;3/4)}=\min\{3/4,h_P\}$, condition (c) leads to $r_{L(V;3/4)}\geq 0$, 
which contradicts $r_{L(M;3/4)}=r_{L(V;3/4)}$.
Thus we have $h_M=0$, which completes the proof.
\end{proof}

\begin{lemma}\label{sec:lemcharv}
Let $M$ and $N$ be elements of $\cA$ and suppose that {\rm(A)} $r_{L(M;-1/20)}=r_{L(N;-1/20)}$ and 
{\rm(B)} $r_{L(M;3/4)}=r_{L(N;3/4)}$ hold.
Then $\chi_{M}=\chi_{N}$ and $\chi_{M\boxtimes P}=\chi_{N\boxtimes P}$.
\end{lemma}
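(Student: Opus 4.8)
The plan is to convert the two hypotheses on conformal weights into equalities of Ramond-twisted characters by invoking condition (f) of the Main Conditions, and then to peel off the desired equalities $\chi_M=\chi_N$ and $\chi_{M\boxtimes P}=\chi_{N\boxtimes P}$ by a determinant argument on the characters of the minimal model $L(-3/5,0)$. First, since $M,N\in\cA$ we have $h_{M\boxtimes P}-h_M,\ h_{N\boxtimes P}-h_N\in 3/4+\Z$, so Lemma~\ref{sec:3414}~{\bf(1)} guarantees that $L(M;-1/20),L(M;3/4),L(N;-1/20),L(N;3/4)$ are all irreducible Ramond-twisted $W$-modules. Hypotheses (A) and (B) say that $L(M;-1/20)$ and $L(N;-1/20)$ share a conformal weight, and likewise $L(M;3/4)$ and $L(N;3/4)$, so condition (f) yields
\[
\chi_{L(M;-1/20)}=\chi_{L(N;-1/20)},\qquad \chi_{L(M;3/4)}=\chi_{L(N;3/4)}.
\]

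Next I would expand both identities using Definition~\ref{sec:def1} together with the fusion rules $L(-3/5,3/4)\boxtimes L(-3/5,-1/20)=L(-3/5,1/5)$ and $L(-3/5,3/4)\boxtimes L(-3/5,3/4)=L(-3/5,0)$ recorded in \S~\ref{sec:virasoro}. Since the character of a tensor-product $W^{(0)}$-module is the product of the factor characters (the prefactor $q^{-c/24}$ of $V\otimes L(-3/5,0)$ splitting correctly as $q^{-c_V/24}\cdot q^{1/40}$, so that no spurious shift appears), the two equalities become, after setting $a=\chi_M-\chi_N$ and $b=\chi_{M\boxtimes P}-\chi_{N\boxtimes P}$,
\[
a\,\chi_{L(-3/5,-1/20)}+b\,\chi_{L(-3/5,1/5)}=0,\qquad a\,\chi_{L(-3/5,3/4)}+b\,\chi_{L(-3/5,0)}=0.
\]

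Finally I would argue that the determinant $\Delta=\chi_{L(-3/5,-1/20)}\chi_{L(-3/5,0)}-\chi_{L(-3/5,1/5)}\chi_{L(-3/5,3/4)}$ of this homogeneous linear system is a nonzero formal $q$-series; because the ring of such series is an integral domain, $a\,\Delta=b\,\Delta=0$ then forces $a=b=0$, which is exactly $\chi_M=\chi_N$ and $\chi_{M\boxtimes P}=\chi_{N\boxtimes P}$. The non-vanishing of $\Delta$ is the one genuine point, and I expect it to be the main (indeed only) obstacle; it is settled by comparing leading exponents, using $\chi_{L(-3/5,h)}=q^{h+1/40}\bigl(1+O(q)\bigr)$. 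The first product then starts at $q^{0}$ with coefficient $1$ while the second starts at $q^{1}$, so $\Delta=1+O(q)\neq 0$ with no cancellation. The mild care required is the bookkeeping of the $q^{-c/24}$ normalizations ensuring the tensor-product character identities hold exactly; once that is in place the leading-term comparison is immediate and the conclusion follows.
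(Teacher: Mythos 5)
Your proof is correct, but it takes a genuinely different route from the paper's. The paper first proves, by a four-way case analysis on $h_{M\boxtimes P}-h_M$ using Lemma~\ref{sec:lemcw}, that either $h_M=h_N$ or $h_{M\boxtimes P}=h_{N\boxtimes P}$ must hold; it then invokes condition (e) to get one of the two character equalities and condition (f) to transfer it to the other via a single cancellation. You bypass both the case analysis and condition (e) entirely: you apply condition (f) twice (legitimately, since Lemma~\ref{sec:3414} guarantees all four modules $L(M;\pm)$, $L(N;\pm)$ are irreducible Ramond-twisted, and (A), (B) say exactly that the relevant pairs have equal conformal weights), and then solve the resulting $2\times 2$ homogeneous linear system in $a=\chi_M-\chi_N$ and $b=\chi_{M\boxtimes P}-\chi_{N\boxtimes P}$ over the integral domain of Puiseux $q$-series. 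Your verification that the determinant $\Delta=\chi_{L(-3/5,-1/20)}\chi_{L(-3/5,0)}-\chi_{L(-3/5,1/5)}\chi_{L(-3/5,3/4)}$ is a unit is right: the exponents $h+1/40$ give leading terms $q^0$ and $q^1$ respectively, so $\Delta=1+O(q)$. What your approach buys is economy — it shows condition (e) is not needed for this particular lemma and replaces the case-by-case bookkeeping of conformal weights with one leading-exponent computation; what the paper's approach buys is the intermediate dichotomy $h_M=h_N$ or $h_{M\boxtimes P}=h_{N\boxtimes P}$, which is conceptually aligned with how the surrounding results (Proposition~\ref{sec:vmodules}) are organized. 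Both are valid proofs.
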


\begin{proof}
We first show that either
\begin{equation}\label{eqn:lemcharv1}
h_{M}=h_{N} \quad\mbox{or}\quad h_{M\boxtimes P}=h_{N\boxtimes P}
\end{equation}
holds,
whose proof is divided into 4 cases: {\bf1.}~$h_{M\boxtimes P}-h_{M}<-1/4$, 
{\bf2.}~$h_{M\boxtimes P}-h_{M}=-1/4$, 
{\bf3.}~$h_{M\boxtimes P}-h_{M}=3/4$, and
{\bf4.}~$h_{M\boxtimes P}-h_{M}> 3/4$.

\medskip\noindent
{\bf1.}~By Lemma~\ref{sec:lemcw}, we have $r_{L(M;-1/20)}=h_{M\boxtimes P}+1/5$
and $r_{L(M;3/4)}=h_{M\boxtimes P}$.
Suppose that $h_{N\boxtimes P}-h_N\leq 3/4$, which implies $r_{L(N;3/4)}=h_{N\boxtimes P}$.
Then we have $h_{M\boxtimes P}=h_{N\boxtimes P}$ by assumption~(2).
Suppose that $h_{N\boxtimes P}-h_N>3/4$.
It then follows that $r_{L(N;-1/20)}=h_{N}-1/20$
and $r_{L(N;3/4)}=h_{N}+3/4$,
which contradict (A) or (B).
Thus, we have~\eqref{eqn:lemcharv1} in this case.

\medskip\noindent
{\bf2.} Suppose $h_{M\boxtimes P}-h_{M}=-1/4$.
Then $r_{L(M;-1/20)}=h_{M}-1/20=h_{M\boxtimes P}+1/5$
by Lemma~\ref{sec:lemcw}.
Since $r_{L(N;-1/20)}=\min\{h_{N}-1/20,h_{N\boxtimes P}+1/5\}$,
the assumption (A) forces~\eqref{eqn:lemcharv1}.

\medskip\noindent
The assertions {\bf3}--{\bf4} are proved in a~similar way, which completes the proof
of~\eqref{eqn:lemcharv1}.

We now prove the lemma. Suppose first that $h_M=h_N$. Then we have $\chi_M=\chi_N$ by (e).
It then follows from (A) and (f) that $\chi_{L(M;-1/20)}=\chi_{L(N;-1/20)}$, which shows $\chi_{M\boxtimes P}=\chi_{N\boxtimes P}$.
In a~similar way, we have $\chi_M=\chi_N$ 
and $\chi_{M\boxtimes P}=\chi_{N\boxtimes P}$ when $h_{M\boxtimes P}=h_{N\boxtimes P}$.
Thus we have the lemma.
\end{proof}

Set $C_s=\{(i,j)\,|\,1\leq i,j\leq 4, \alpha_j-\alpha_i\in 4/5+\mathbb{Z}\}$.
We have completed the preliminaries on the proof of Proposition~\ref{sec:vmodules}.

\begin{proof}[Proof of Proposition~\ref{sec:vmodules}]
Let $M$ and $N$ be elements of $\cA$.
Since $P\boxtimes P\cong V$, by Lemma~\ref{sec:lemcw} (I) and Lemmas~\ref{sec:lemvac}--\ref{sec:lemcharv}, it suffices to show $r_{L(M;k)}=r_{L(N;k)}$ for each $k=-1/20,3/4$. 
We devide the proof into 3 parts: {\bf1.}~$s\neq 6$ nor $6/5$, {\bf2.}~$s=6$, 
and {\bf3.}~$s=6/5$.

\medskip\noindent
{\bf1.}~
Suppose that $s\neq 6$ nor $6/5$.
It then follows by Proposition~\ref{sec:prop1} that $C_s=\{(1,2),(3,4)\}$.
Since $h_M,h_N\not\in\Z$, Lemma~\ref{sec:lemcw} (5) shows that the following (A) or (B) holds:

\noindent
(A) $r_{L(V;-1/20)}=\alpha_1$, $r_{L(V;3/4)}=\alpha_2$,
 $r_{L(M;-1/20)}=r_{L(N;-1/20)}=\alpha_3$, and $r_{L(M;3/4)}=r_{L(N;3/4)}=\alpha_4$;

\noindent
  (B) $r_{L(V;-1/20)}=\alpha_3$, $r_{L(V;3/4)}=\alpha_4$,
 $r_{L(M;-1/20)}=r_{L(N;-1/20)}=\alpha_1$, and $r_{L(M;3/4)}=r_{L(N;3/4)}=\alpha_2$.

\noindent
Hence we have $r_{L(M;k)}=r_{L(N;k)}$ for $k=-1/20,3/4$.

\medskip\noindent
{\bf2.}~
If $s=6$, then there exists a~unique pair of $1\leq i<j\leq 4$
such that $\alpha_i=\alpha_j$.
Therefore, there is a~solution of~$(\flat_s)$ with a~logarithmic term, and the space of characters of Ramond-twisted $W$-modules is at most $3$-dimensional.
Hence, there are at most 3 conformal weights of $W$-modules.
Since $h_M,h_N\not\in\Z$, the numbers $r_{L(M;-1/20)}$ and $r_{L(N;-1/20)}$
do not belong to $r_{L(V;-1/20)}+\Z$.
Hence, $r_{L(M;-1/20)}=r_{L(N;-1/20)}=r_{L(V;3/4)}$,
and eventually, $r_{L(M;3/4)}=r_{L(N;3/4)}$.

\medskip\noindent
{\bf3.}~
If $s=6/5$, then we see that 
$(\alpha_i)_{1\leq i\leq 4}=(-1/10,7/10,3/10,1/10)$ and $C_{6/5}=\{(3,4),(4,1),(1,2)\}$.
Since there are no integral differences between the indices, the exponent
of any irreducible Ramond-twisted $W$-module belongs to
$\{\alpha_i|1\leq i\leq 4\}$.
Let $N^0,N^1$ and $N^2$ be irreducible $V$-modules such that
$h_{N^i\boxtimes P}-h_{N^i}\in3/4+\mathbb{Z}$ for each $0\leq i\leq 2$.
It then follows that $r_{L(N^i;-1/20)}\in\{\alpha_1,\alpha_3,\alpha_4\}$.
Hence, it suffices to show that $\#\{r_{L(N^i;-1/20)}|i=0,1,2\}\leq 2$.
Suppose on the contrary that 
the exponents of $L(N^0;-1/20),L(N^1;-1/20)$ and $L(N^2;-1/20)$ are 
$\alpha_3,\alpha_4$ and $\alpha_1$, respectively.
It then follows that the exponent of  $L(N^1;3/4)$ is $\alpha_1$.
Therefore, (g) implies that the $W^{(0)}$-module $N^2\otimes L(-3/5,-1/20)$ is isomorphic to one of
the modules $N^1\otimes L(-3/5,3/4)$, $(N^1\boxtimes P)\otimes L(-3/5,0)$,
$(N^1)'\otimes L(-3/5,3/4)'$ and $(N^1\boxtimes P)'\otimes L(-3/5,0)'$.
It contradicts $L(-3/5,0)'\cong L(-3/5,0)$ and $L(-3/5,3/4)'\cong L(-3/5,3/4)$.
Thus,  $\#\{r_{L(N^i;-1/20)}|i=0,1,2\}\leq 2$, which shows $r_{L(M;k)}=r_{L(N;k)}$
for $k=-1/20$ and $3/4$.

The discussions {\bf1}--{\bf3} show the lemma.
\end{proof}

We now give a~proof of Theorem~\ref{sec:identification2}.

\begin{proof}[{\bf Proof of Theorem~\ref{sec:identification2}}]
We divide a~proof into two parts (A): $\cA=\emptyset$ and (B): $\cA\neq \emptyset$.

\medskip\noindent
{\bf The case (A)}
Suppose that $\cA=\emptyset$.
It then follows from Proposition~\ref{sec:vmodules} and~(h) that the space of characters of Ramond-twisted  $W$-modules
has a~basis $(\chi_{L(V;-1/20)},\chi_{L(V;3/4)})$.
Then we see that $s=32/5$ by Propositions~\ref{sec:modularinv1} and~\ref{sec:prop1}. 
Since any solution of $(\flat_{32/5})$ of CFT type has an~exponent $-19/60$ by \S~\ref{sec:cft},
we see that $\chi_{L(V;-1/20)}$ has the exponent $-19/60$ and $\chi_{L(V;3/4)}$ is not of CFT type.
Therefore, the central charge of $V$ is 7 and $h_P=3/4$.
Then Lemma~\ref{sec:lemindex} shows that the characters of $V$ satisfy~$(\sharp_{\mu(7/2)})$, which implies $\dim V_1=133$.
By using (d), we see that the VOA $V$ is self-dual.
Therefore, by Theorem~\ref{sec:propsecond} we have
 $V\cong V_1(E_7)$ and $P\cong V_1(E_7;\varpi_7)$,
and then $V\cong U_{E_8}$ and $P\cong P_{E_8}$.
By Lemma~\ref{sec:branching}, we see that
$W\cong \mathcal{W}_{-5}(E_8,f_\theta)$.

\medskip\noindent
{\bf The case (B)}
Suppose that $\cA\neq\emptyset$.
Then we have an~element $M$ of $\cA$.
Set $L^0=L(V;-1/20)$, 
$L^1=
L(V;3/4)$,
$L^2=L(M;-1/20)$ and
$L^3=L(M;3/4)$ with the characters $\chi_0,\chi_1,\chi_2$ and $\chi_3$, respectively.
It then follows from Proposition~\ref{sec:vmodules} that the space of characters of Ramond-twisted  $W$-modules
has a~spanning set $\{\chi_0,\chi_1,\chi_2,\chi_3\}$.
Define $m\in\Z$ by $h_{M\boxtimes P}-h_M=3/4+m$. 
By Lemma~\ref{sec:lemcw}, the conformal weights $r_i$ of $L^i$ are
\[
(r_0,r_1,r_2,r_3)=\begin{cases}(-1/20,3/4,h_M-1/20,h_M+3/4)&
\mbox{if}\ m\geq0,\\
(-1/20,3/4,h_M+m+19/20,h_M+m+3/4)&
\mbox{if}\ m\leq -1.
\end{cases}
\]
By comparing the exponents of $L^0,L^1,L^2$ and $L^3$ and indicial roots of~\eqref{eqn:fourth}, we see that 
$m\leq-1$, $s=c$ and $c=12h_M+12m+42/5$.

By Proposition~\ref{sec:scer}, we have the simple current extension $R$
of $V\otimes V_1(A_1)$ by $P\otimes V_1(A_1;\alpha/2)$.
It follows from Proposition~\ref{sec:vmodules} that the space of characters of $R$-modules has a~basis which consists of the characters of 
$R$ itself
and $K=M\otimes V_1(A_1)\oplus (M\boxtimes P)\otimes V_1(A_1;\alpha/2)$.
Since $m\leq -1$, the conformal weight of $K$ is $h_M+m+1$.
As the central charge of $V$ is $12h_M+12m+9$, 
 that of $R$ is $12h_M+12m+10$.
Since the exponents of the characters of $R$ coincide with the indices of~$(\sharp_{\mu(6h_M+6m+5)})$, the characters of $R$ satisfy 
$(\sharp_{\mu(6h_M+6m+5)})$ by Lemma~\ref{sec:lemindex}.
It follows from (d) that $R$ is self-dual.
As $A_1\subset R_1$, Theorem~\ref{sec:propsecond} implies that $R\cong V_1(\mathfrak{g})$
with a~Lie algebra $\mathfrak{g}$ in the Deligne exceptional series.
Since $P\neq 0$ and $R$ has two irreducible modules, it follows that $\mathfrak{g}\neq A_1$ and $E_8$.
Therefore, we have $V=U_\mathfrak{g}$, $P=P_\mathfrak{g}$ and that
 $W$ is isomorphic to $\mathcal{W}_{-h^\vee/6}(\mathfrak{g},f_\theta)$
by Lemma~\ref{sec:branching}.
Thus we have proved Theorem~\ref{sec:identification2}.
\end{proof}

\section{The other identification problems}\label{sec:complements}

In Theorem~\ref{sec:identification1}, we find rational VOAs whose characters satisfy~\eqref{eqn:fourth} when $s$ has the form~\eqref{eqn:deligne1} with
the dual Coxeter number $h^\vee$ of one of the Lie algebras in the Deligne exceptional series. 
Moreover, we discuss the case of $s=6$ and $s=-6/5$ (these numbers are given by \eqref{eqn:deligne1} with $h^\vee=24$ and $3/2$) in Remark~\ref{e712}.
In this section we discuss about the problem how to find rational VOAs whose characters satisfy~\eqref{eqn:fourth}, where $s$ is one of the remaining numbers appearing in~\eqref{eqn:cand6}.

\medskip\noindent
{\bf (1) $\mathbf{s=-48/5}$.} By Proposition~\ref{sec:modularinv2}, the characters of a~simple $C_2$-cofinite rational VOA of CFT type 
do not satisfy~$(\flat_{-48/5})$.
The affine VOA $V=V_{-3/2}(G_2)$ at admissible level
$-3/2$ is not $C_2$-cofinite but semi-simple in the category $\mathcal{O}$.
The VOA $V$ has two ordinary irreducible modules with the conformal weights  $0$ and $4/5$.
The characters of ordinary modules over affine VOAs at admissible levels
satisfy MLDEs (\cite{AK}).
Since the central charge of $V_{-3/2}(G_2)$ is $-42/5$,
the irreducible characters have the forms
$q^{7/20}(1+14q+\sum_{n=2}^\infty a_n q^n)$ and $q^{23/20}(7+\sum_{n=1}^\infty b_nq^n)$ with non-negative integers $a_n$ and $b_n$.
Since the solutions $f_0$ and $7 f_{4/5}$ of~\eqref{eqn:fourth} defined in {\bf (a)} of
Appendix~\ref{sec:basisrational}
also have this form,
we propose a~conjecture below.
\begin{conjecture}
The characters of the irreducible ordinary modules over $V_{-3/2}(G_2)$
coincide with the solutions $f_0$ and $7 f_{4/5}$ of~$(\flat_{-48/5})$.
\end{conjecture}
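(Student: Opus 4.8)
The plan is to reduce the conjecture to the fact, established in \S~\ref{sec:cft}, that the indices $7/20,23/20,-3/20$ and $-7/20$ of~$(\flat_{-48/5})$ have no pairwise integral differences. By the recursion~\eqref{eqn:recursion1}, this means that the solution of~$(\flat_{-48/5})$ with exponent $7/20$ and leading coefficient $1$ is unique and equals $f_0$, while the solution with exponent $23/20$ and leading coefficient $1$ is $f_{4/5}$ (in the notation of Appendix~\ref{sec:basisrational}~{\bf(a)}). Since the normalized character of the vacuum module of $V_{-3/2}(G_2)$ has the form $q^{7/20}(1+14q+\cdots)$ and that of the weight-$4/5$ module has the form $q^{23/20}(7+\cdots)$, it therefore suffices to prove the single assertion that both ordinary characters are solutions of~$(\flat_{-48/5})$; the identifications $\chi_0=f_0$ and $\chi_1=7f_{4/5}$ then follow immediately from their exponents and leading terms.

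To produce a common annihilating operator I would combine two inputs. First, since $V_{-3/2}(G_2)$ is quasi-lisse, the results of~\cite{AK} guarantee that its normalized character satisfies a monic MLDE, and that the same equation annihilates the character of every ordinary module, the operator arising from a singular vector in the vacuum module. Second, the Kac-Wakimoto character formula for admissible $\g$-modules at level $-3/2$ (cf.~\cite{KW2}) makes the $q$-expansions of $\chi_0$ and $\chi_1$ explicitly computable. I would compute these expansions to sufficiently high order, verify that the left-hand side of~$(\flat_{-48/5})$ annihilates them, and then use the guarantee from~\cite{AK} that they are genuine solutions of a finite-order MLDE with the expected indices in order to pin the annihilating operator down as that of~$(\flat_{-48/5})$.

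The delicate point, and the reason the statement is only conjectural, is to control the full modular span of the ordinary characters, and hence the precise order of the MLDE produced by~\cite{AK}. Because $V_{-3/2}(G_2)$ is not $C_2$-cofinite, the two ordinary characters do not by themselves span an $SL_2(\Z)$-invariant space; indeed the proof of Proposition~\ref{sec:modularinv2} shows that $f_0$ transforms under $S$ with a nonzero component along $f_{-7/10}$, so the remaining solutions $f_{-1/2}$ and $f_{-7/10}$ of~$(\flat_{-48/5})$ can occur only as modular images of $\chi_0,\chi_1$ and presumably correspond to non-ordinary (spectral-flow twisted or relaxed-highest-weight) admissible modules. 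The hard part will thus be to prove that this full admissible span is exactly four-dimensional, equal to the solution space of~$(\flat_{-48/5})$, with no further modular images intruding. Concretely this amounts to a complete enumeration of the admissible $G_2$-modules at level $-3/2$ via Kac-Wakimoto, together with a check, using the index-sum and Wronskian formula of~\cite[Theorem~3.7]{Mas}, that their exponents account for all four indices of~$(\flat_{-48/5})$ and no more. Once such an enumeration is in hand, the identification of the two ordinary characters with $f_0$ and $7f_{4/5}$ is immediate from the uniqueness argument of the first paragraph.
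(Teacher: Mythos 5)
The statement you are trying to prove is not proved in the paper at all: it is stated as a conjecture, and the only evidence the authors offer is that the two ordinary irreducible characters of $V_{-3/2}(G_2)$ have exponents $7/20$ and $23/20$ and leading terms $1+14q+\cdots$ and $7+\cdots$, matching the solutions $f_0$ and $7f_{4/5}$ of $(\flat_{-48/5})$ listed in Appendix~B~{\bf(a)}. Your first paragraph is a correct and worthwhile reduction: since the four indices $7/20$, $23/20$, $-3/20$, $-7/20$ of $(\flat_{-48/5})$ have no pairwise integral differences, a solution is determined by its exponent and leading coefficient via the recursion, so the whole conjecture does reduce to the single claim that both ordinary characters are annihilated by the operator of $(\flat_{-48/5})$. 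This matches the implicit logic behind the authors' phrasing.

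The gap is in your second and third paragraphs, and it is the same gap that keeps the statement conjectural in the paper. The result of Arakawa--Kawasetsu guarantees that the character of a quasi-lisse vertex algebra satisfies \emph{some} monic MLDE, but gives no control over its order or its coefficients, so there is no a priori reason that the annihilating operator is the fourth-order operator $(\flat_{-48/5})$ rather than an equation of higher order containing the two ordinary characters among its solutions together with extra modular images. Checking finitely many Fourier coefficients of $\chi_0$ and $\chi_1$ against $(\flat_{-48/5})$, as you propose, can falsify but never establish that they are genuine solutions. The proposed remedy --- a complete enumeration of the admissible (including twisted or relaxed) modules at level $-3/2$ whose characters span the modular closure, combined with the Wronskian/index-sum criterion of Mason to show that this span is exactly four-dimensional with indices $7/20,23/20,-3/20,-7/20$ --- is the right kind of argument, and is essentially what Lemma~6.2 of the paper accomplishes in the $C_2$-cofinite rational cases, but you have only described it, not carried it out. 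As written, the proposal is a sound strategy with a correctly identified missing step, not a proof.
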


Note that if this conjecture is true, The solution $f_0$ in {\bf(a)} in Appendix~\ref{sec:basisrational} gives a~character formula of $V_{-3/2}(G_2)$
in terms of modular forms in Table~\ref{tb:forms}.

\medskip\noindent
{\bf (2) $\mathbf{s=-38/5}$.} The same argument as in~{\bf(1)} suggests the following conjecture.

\begin{conjecture}
There exists an~extension $V$ of the admissible affine vertex operator algebra $V_{-4/3}(A_2)$
of the form $V\cong V_{-4/3}(A_2)\oplus V_{-4/3}(A_2;2\varpi_1)\oplus V_{-4/3}(A_2;2\varpi_2)$.
Moreover, the characters of the ordinary irreducible modules over $V$ coincide with
the solutions $f_{-8/15}$ and $f_{-1/3}$ of~$(\flat_{-38/5})$ up to
a~scalar multiple.
\end{conjecture}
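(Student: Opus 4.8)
The plan is to run the same argument indicated for $V_{-3/2}(G_2)$ in~\textbf{(1)}, with the essential extra step of first producing the extension $V$. I would therefore split the proof into two parts: (i) the construction of $V$ as a simple current extension of $V_{-4/3}(A_2)$ by the $\Z_3$ of simple currents $\{V_{-4/3}(A_2),V_{-4/3}(A_2;2\varpi_1),V_{-4/3}(A_2;2\varpi_2)\}$, and (ii) the identification of the characters of the ordinary irreducible $V$-modules with $f_{-8/15}$ and $f_{-1/3}$.

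For part (i), I would first check that $V_{-4/3}(A_2;2\varpi_1)$ and $V_{-4/3}(A_2;2\varpi_2)$ are the simple currents attached to the two non-trivial classes of the center $\Z_3$ of $A_2$, so that under the fusion product they and the vacuum form a group isomorphic to $\Z_3$ (here $2\varpi_1$ and $2\varpi_2$ represent the two generators modulo the root lattice). Using $h_\lambda=(\lambda,\lambda+2\rho)/2(k+h^\vee)$ with $k=-4/3$ and $h^\vee=3$ one finds $h_{2\varpi_1}=h_{2\varpi_2}=2$, so the generating fields have even integer conformal weight; this is the parity condition that, through the mechanism of Lemma~\ref{sec:lemsuper}, guarantees the extension is a genuine vertex operator algebra rather than a super\,VOA. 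I would then invoke the existence theory for simple current extensions: once the category of ordinary $V_{-4/3}(A_2)$-modules is known to carry a (rigid) braided vertex tensor category structure, the integer-weight $\Z_3$ simple currents assemble into a commutative algebra object whose associated extension is the desired $V$.

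For part (ii), I would use that the characters of the ordinary irreducible modules over an admissible affine vertex operator algebra, and hence over $V$, satisfy a modular linear differential equation and span a vector-valued modular form~(\cite{AK}). The central charge of $V$ equals that of $V_{-4/3}(A_2)$, namely $c=-32/5$, so $-c/24=4/15$. Realizing the ordinary irreducible $V$-modules as the $\Z_3$-orbits of ordinary $V_{-4/3}(A_2)$-modules, I would compute that besides the vacuum (of exponent $4/15$) there are exactly two, of lowest conformal weights $-8/15$ and $-1/3$, and hence of exponents $-8/15+4/15=-4/15$ and $-1/3+4/15=-1/15$. These are precisely the indicial roots $\alpha_4$ and $\alpha_3$ of $(\flat_{-38/5})$, while the vacuum realizes $\alpha_1=4/15$. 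Since the four indicial roots $\{-4/15,-1/15,4/15,16/15\}$ of $(\flat_{-38/5})$ have no integral differences, for each of them there is, up to a scalar, a unique solution of the form $q^{\alpha}(1+O(q))$, determined by the recursion~\eqref{eqn:recursion1}; thus once each ordinary character is shown to satisfy $(\flat_{-38/5})$ it is forced to be the scalar multiple, by its lowest weight multiplicity, of the corresponding $f$. The latter verification is a finite computation, matching the explicitly known orbit-sum characters against the explicit (quasi)modular solutions built in Appendix~\ref{sec:basisrational}, exactly as the solutions were confirmed in Section~\ref{sec:construction}.

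The hard part is part (i). For a rational vertex operator algebra the extension would be routine, but $V_{-4/3}(A_2)$ is admissible and non-rational, so its module category is neither semisimple nor manifestly rigid; establishing the vertex tensor category structure and the vanishing of the $\Z_3$ associativity obstruction (a class in $H^3(\Z_3,\C^\times)$) is exactly what keeps this a conjecture, and an explicit free-field or coset realization of $V$ would be the most convincing route. A secondary difficulty is to prove that the list of ordinary irreducible $V$-modules is complete --- that no further ordinary characters appear once relaxed and logarithmic modules at this admissible level are taken into account --- so that the two solutions $f_{-8/15}$ and $f_{-1/3}$ really exhaust the non-vacuum ordinary characters.
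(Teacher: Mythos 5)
The statement you are proving is left as a \emph{conjecture} in the paper: \S\ref{sec:complements}~(2) offers no proof and only remarks that ``the same argument as in (1) suggests'' it, where (1) is itself the conjectural identification of the $V_{-3/2}(G_2)$ characters with solutions of $(\flat_{-48/5})$. So there is no proof in the paper to compare against, and your proposal should be judged as an attack plan. As such it is sensible and your numerics check out: $c(V_{-4/3}(A_2))=-32/5$, the indices of $(\flat_{-38/5})$ are $\{-4/15,-1/15,4/15,16/15\}$ with no integral differences, $h_{2\varpi_1}=h_{2\varpi_2}=2$, and the exponents of $f_{-8/15}$ and $f_{-1/3}$ in Appendix~\ref{sec:basisrational}~{\bf(b)} are indeed $-4/15$ and $-1/15$. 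You also correctly locate the principal obstruction in part~(i): $V_{-4/3}(A_2)$ is admissible but not $C_2$-cofinite or rational, so the simple-current extension machinery (vertex tensor structure, rigidity, the fact that $V_{-4/3}(A_2;2\varpi_i)$ are simple currents with $\Z_3$ fusion, and the vanishing of the $H^3(\Z_3,\C^\times)$ obstruction) is not available off the shelf; this is exactly why the statement is a conjecture, and you say so. In that sense the proposal is honest but is not a proof.

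Beyond the admitted gap in part~(i), there is a genuine gap in part~(ii) that you underestimate. Citing~\cite{AK} only gives that the ordinary characters satisfy \emph{some} MLDE of unspecified order; it does not place them in the solution space of the specific fourth-order equation $(\flat_{-38/5})$. Your fallback, ``the latter verification is a finite computation, matching the explicitly known orbit-sum characters against the explicit (quasi)modular solutions,'' is not correct as stated: agreement of finitely many Fourier coefficients never proves equality of two $q$-series unless one first knows both lie in a common finite-dimensional modular-invariant space. The tool the paper uses elsewhere for this, Lemma~\ref{sec:lemindex}, requires a vector-valued modular form whose components realize \emph{all} the indices of the MLDE; here you produce only three ordinary characters, with exponents $4/15$, $-4/15$, $-1/15$, and nothing realizing the fourth index $16/15$, so that lemma does not apply. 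You would need either a closed-form (coset or free-field) expression for the characters that can be matched against the polynomials $G_1,G_2,G_3$ of Appendix~\ref{sec:poly}, or an independent argument that the three ordinary characters span an $SL_2(\Z)$-invariant subspace of the four-dimensional solution space of $(\flat_{-38/5})$. Finally, your assertion that the non-vacuum ordinary irreducible $V$-modules have lowest conformal weights exactly $-8/15$ and $-1/3$ is stated as something you ``would compute''; it depends on the classification of ordinary modules at this admissible level and on how they assemble into $\Z_3$-orbits for the (so far hypothetical) extension $V$, and is itself part of what must be proved.
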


\medskip\noindent
{\bf (3)} 
Let $s$ be an~element of $\{54/5,18,-66/5,-6\}$.
Let $V$ be a~rational VOA and suppose that the characters of $V$ satisfy~\eqref{eqn:fourth}.
There is a~solution of vacuum type of~\eqref{eqn:fourth} of exponent $-s/24$.
However, since $1-s/24$ is the exponent of a~solution of~\eqref{eqn:fourth},  
it is difficult to know the dimension of the Lie algebra $V_1$.
If we know $\dim V_1$, we have finite candidates of the structure of $V_1$,
and it often enables us to find affine sub\,VOA of $V$.
Then by using the representation theory of affine VOA's, there is a chance
to determine the structure of $V$.
However, since it is difficult to determine $\dim\,V_1$ in this case,
we can not use this discussion, which makes it difficult to find  $V$.
When $s=18$ or $-6$, the character of the trivial VOA $V=\mathbb{C}$ satisfies~\eqref{eqn:fourth} since $f=1$ is a~solution of~\eqref{eqn:fourth}.

\medskip\noindent
{\bf (4) $\mathbf{s=-8/5}$.}
The characters of the Virasoro minimal model $L(-22/5,0)$ coincide with
the solutions $f_0$ and $f_{-1/5}$ of~$(\flat_{-8/5})$.

\appendix

\section{Modular forms, differential relations and functional equations}\label{sec:modularforms}
In this appendix we introduce some modular forms of levels $N=2$--$5$ and $15$
 to describe solutions of~\eqref{eqn:fourth} in Appendices~\ref{sec:basisrational}--\ref{sec:quasimodular}.
 The modular forms are listed in Table~\ref{tb:forms}.
  Moreover, we list some differential relations and functional equations of modular forms in {\bf(a)}--{\bf(g)} below.
By using these, we can prove that the functions listed in Appendices~\ref{sec:basisrational}--\ref{sec:quasimodular} are solutions of~$(\flat_s)$.

The first column of Table~\ref{tb:forms} is the names of the modular forms $f$.
The second column shows the Fourier expansion $f(\tau)=\sum_{n=0}^\infty a_nq^{n+\alpha}$ of $f$, where $\tau\in\mathbb{H}$ and $q=e^{2\pi i\tau}$.
The third and fourth columns are the weight and level of $f$, respectively.

\begin{table}[bht]
\caption{Modular Forms}\label{tb:forms}
\begin{tabular}{|c|l|c|c|}
\hline
Name& Fourier Expansion & Weight & Level \\ \hline
$H_2$& $1+24\sum_{n=1}^{\infty}\left( \sum_{\substack{d | n,\ d\text{:odd}}}d\right) q^{n}$ & 2 & 2\\
$\Delta_2$ & $\eta(q^2)^{16}/\eta(q)^8$ & 2 & 2\\
$    I_{3}$ &$1+6\sum_{n=1}^{\infty}\left( \sum_{d | n}\left(\frac{d}{3}\right)\right) q^{n}$
&$1$ & $3$\\
$    \Delta_3$ &$\eta(q^3)^3/\eta(q)$ &$1$&$3$\\
$    \theta$&$\sum_{n\in\mathbb{Z}}q^{n^2}$&$1/2$&$4$\\
$\Delta_4$&$\eta(q^4)^2/\eta(q^2)$&$1/2$&$4$\\
$\psi_1$ & $\eta(q)^{2/5}\Bigl\{q^{-1/60}\prod_{\substack{n>0 \\ n\not\equiv0, \pm2\bmod{5}}}(1-q^n)^{-1}\Bigr\}$
&$1/5$ & $5$\\
$\psi_2$&$\eta(q)^{2/5}\Bigl\{q^{11/60}\prod_{\substack{n>0 \\ n\not\equiv0, \pm1\bmod{5}}}(1-q^n)^{-1}\Bigr\}$
&$1/5$ & $5$\\
$    I_{15}$&$\eta(q^3)^2\eta(q^5)^2 / \eta(q)\eta(q^{15})$
&$1$&$15$\\
$    \Delta_{15}$&$\eta(q)^2\eta(q^{15})^2/\eta(q^3)\eta(q^5)$
&$1$&$15$\\ \hline
\end{tabular}
\end{table}

These modular forms and Eisenstein series satisfy the following differential relations and functional equations.

\medskip\noindent
{\bf (a) Level~2}
\begin{align*}
&6H_2'=E_2 H_2-H_2^2+192\Delta_2^2\,,
&
&E_4=H_2^2+192\Delta_2^2\,,\\
&6\Delta_2'=(E_2+2H_2)\Delta_2\,,
&
&E_6=(H_2^2-576\Delta_2^2)H_2\,.
\end{align*}

\medskip\noindent
{\bf (b) Level~3}
\begin{align*}
&12I_3'=E_2I_3-I_3^3+108\Delta_3^3\,,
&
&E_4=I_3(I_3^3+216\Delta_3^3)\,,\\
&12\Delta_3'=(E_2+3I_3^2)\Delta_3\,,
&
&E_6=I_3^6-540I_3^3\Delta_3^3-5832\Delta_3^6\,.
\end{align*}

\medskip\noindent
{\bf (c) Level~4}
\begin{align*}
&24\theta'=(E_2-\theta^4+80\Delta_4^4)\theta\,,
&
&E_4=\theta^8+224\theta^4\Delta_4^4+256\Delta_4^8\,,\\
&24\Delta_4'=(E_2+5\theta^4-16\Delta_4^4)\Delta_4\,,
&
&E_6=(\theta^4+16\Delta_4^4)(\theta^8-544\theta^4\Delta_4^4+256\Delta_4^8)\,.
\end{align*}

\medskip\noindent
{\bf (d) Level~5}
\begin{align*}
&60\psi_1'=(E_2-\psi_1^{10}+66\psi_1^5\psi_2^5+11\psi_2^{10})\psi_1\,,
\quad
60\psi_2'=(E_2+11\psi_1^{10}-66\psi_1^5\psi_2^5-\psi_2^{10})\psi_2\,,\\
&E_4=\psi_1^{20}+228\psi_1^{15}\psi_2^5+494\psi_1^{10}\psi_2^{10}-228\psi_1^5\psi_2^{15}+\psi_2^{20}\,,\\
&E_6=(\psi_1^{10}+\psi_2^{10})(\psi_1^{20}-522\psi_1^{15}\psi_2^5-10006\psi_1^{10}\psi_2^{10}+522\psi_1^5\psi_2^{15}+\psi_2^{20})\,.
\end{align*}

\medskip\noindent
{\bf (e) Level~10}
\begin{align*}
&6H_2(q^5)'=5\{E_2(q^5)H_2(q^5)-H_2(q^5)^2+192\Delta_2(q^5)^2\}\,,\\
&6\Delta_2(q^5)'=5\Delta_2(q^5)\{E_2(q^5)+H_2(q^5)\}\,,\\
&5E_2(q^5)=E_2(q)+4\{\psi_1(q)^{10}+\psi_2(q)^{10}\}\,,\\
&55H_2(q^5)=-3\{5\psi_2(q)^{10}+7\psi_1(q^2)^5\psi_1(q)^5-21\psi_1(q^2)^{5}\psi_2(q)^5+30\psi_2(q^2)^5\psi_1(q)^5\}\\
&\hspace*{30ex}+76\psi_1(q^2)^{10}-78\psi_1(q^2)^5\psi_2(q^2)^5+70\psi_2(q^2)^{10}\,,\\
&3000\Delta_2(q^5)^2=2\{H_2(q)^2-60\Delta_2(1)^2\}-H_2(q)\psi_1(q)^{10}\\
&\hspace*{15ex}-H_2(q^5)\{38\psi_1(q)^{10}+132\psi_1(q)^5\psi_2(q)^5+37\psi_2(q)^{10}\}-720\Delta_2(q)\Delta_2(q^5)\\
&\hspace*{20ex}+2H_2(q)\{8\psi_1(q^2)^{10}-26\psi_1(q^2)^5\psi_2(q^2)^{5}+3\psi_2(q^2)^{10}\}\\
&\hspace*{30ex}+10H_2(q^5)\{2\psi_1(q^2)^{10}-2\psi_1(q^2)\psi_2(q^2)^{5}+3\psi_2(q^2)^{10}\}
\end{align*}

\medskip\noindent
{\bf (f) Level~15}
\begin{align*}
&12I_{15}'=(E_2-5I_{15}^2-2I_{15}\Delta_{15}-13\Delta_{15}^2+I_3^2)I_{15}\,,\\
&12\Delta_{15}'=(E_2+13I_{15}^2-2I_{15}\Delta_{15}+5\Delta_{15}^2-2I_3^2)\Delta_{15}\,.\\
&12I_3(q^5)'=E_2(q)I_3(q^5)+I_3(q)\{4I_{15}(q)+4I_{15}(q)\Delta_{15}(q)+2\Delta_{15}(q)^2\}\\
&\hspace*{30ex}
-I_3(q^5)\{5I_{15}(q)^2+2I_{15}(q)\Delta_{15}(q)-5\Delta_{15}(q)^2\}\,,\\
&I_3(q)^2=6I_{15}(q)^2+6\Delta_{15}(q)^2-5I_{3}(q^5)^2\,,\\ 
&I_3(q)I_3(q^5)=I_{15}(q)^2+4I_{15}(q)\Delta_{15}(q)-\Delta_{15}(q)^2\,,\\
&I_3(q)^3=6I_3(q)\{I_{15}(q)^2+\Delta_{15}(q)^2\}-5I_{3}(q^5)\{I_{15}(q)^2+4I_{15}(q)\Delta_{15}(q)-\Delta_{15}(q)^2\}\,,\\
&108\Delta_3(q)^3=I_{3}(q)\{25I_{15}(q)^2-2I_{15}(q)\Delta_{15}(q)-\Delta_{15}(q)^2\}\\
&\hspace*{30ex}-5I_{3}(q^5)\{5I_{15}(q)^2+8I_{15}(q)\Delta_{15}(q)+\Delta_{15}(q)^2\}\,,\\
&120\psi_2(q)^{10}=45\{I_{15}(q)^2-5\Delta_{15}(q)^2\}-6I_{3}(q)\{14I_{15}(q)-2I_{15}(q)\Delta_{15}(q)-5\Delta_{15}(q)^2\}\\
&\hspace*{30ex}+3I_{3}(q^5)\{8I_{15}(q)+64\Delta_{15}(q)-5I_{3}(q)\}.
\end{align*}

\medskip\noindent
{\bf (g) Level~20}
\begin{align*}
&\theta(q^5)'=\theta(q^5)\{E_2(q)+4\psi_1(q)^{10}+4\psi_2(q)^{10}-5\theta(q^5)^4+400\Delta_4(q^5)^4\}\,,\\
&\Delta_4(q^5)'=\Delta_4(q^5)\{E_2(q)+4\psi_1(q)^{10}+4\psi_2(q)^{10}+25\theta(q^5)^4-80\Delta_4(q^5)^4\}\,,\\
&\theta(q)^5=80\theta(q)\Delta_4(q)^4-5\theta(q)\{\psi_1(q)^5+\psi_2(q)^5\}\\
&\hspace*{35ex}+6\theta(q^5)\{\psi_1(q)^{10}-11\psi_1(q)^5\psi_2(q)^5-\psi_2(q)^{10}\}\,,\\
&\Delta_{4}(q)^5=5\Delta_4(q)\{\theta(q)^4-\psi_1(q)^{10}-\psi_2(q)^{10}\}\\
&\hspace*{35ex}+6\Delta_4(q^5)\{\psi_1(q)^{10}-11\psi_1(q)^{5}\psi_2(q)^5-\psi_2(q)^{10}\}\,,\\
&40\Delta_{4}(q)^3\Delta_{4}(q^5)=5\theta(q)^3\theta(q^5)+\psi_1(q)^{10}-36\psi_1(q)^5\psi_2(q)^5-\psi_2(q)^{10}\\
&\hspace*{30ex}-6\{\psi_1(q^4)^{10}-36\psi_1(q^4)^{5}\psi_2(q^4)^{5}-\psi_2(q^4)^{10}\}\,,\\
&48\psi_1(q^4)^{10}=2\{5\psi_1(q)^{10}+36\psi_1(q)^5\psi_2(q)^5+\psi_2(q)^{10}\}
+45\theta^2(q^5)\{\theta(q^5)^2+\theta(q)^2\}\\
&\hspace*{20ex}-2\theta(q)\theta(q^5)\{135\theta(q^5)^2+71\theta(q)^2\}\\
&\hspace*{25ex}
-160\Delta_{4}(q)^3\Delta_{4}(q^5)+360\psi_1(q^4)^5\psi_1(q)^{5}-504\psi_2(q^4)^5\psi_2(q)^5\,,\\
&48\psi_2(q^4)^{10}=2\{\psi_1(q)^{10}-36\psi_1(q)^5\psi_2(q)^5+5\psi_2(q)^{10}\}
+45\theta^2(q^5)\{\theta(q^5)^2+\theta(q)^2\}\\
&\hspace*{20ex}+2\theta(q)\theta(q^5)\{135\theta(q^5)^2+71\theta(q)^2\}\\
&\hspace*{25ex}
+160\Delta_{4}(q)^3\Delta_{4}(q^5)-504\psi_1(q^4)^5\psi_1(q)^{5}+360\psi_2(q^4)^5\psi_2(q)^5\,.
\end{align*}

By using the above {\bf (a)}--{\bf(g)}, we can prove that the functions listed in Appendices~\ref{sec:basisrational}--\ref{sec:quasimodular} are solutions of~$(\flat_s)$.

\section{Modular linear differential equations with a~solution of CFT type}
\label{sec:basisrational}

In this appendix we list a~fundamental system of solutions of~\eqref{eqn:fourth} in a~case by case basis
when~\eqref{eqn:fourth} has a~solution of CFT type and does not have a~quasimodular 
solution of positive depth.
Among $s$ in~\eqref{eqn:cand5}, the numbers in~\eqref{eqn:cand6} have the required conditions.
The solutions of the remaining cases are given in Appendix~\ref{sec:quasimodular}.
For $r\in\mathbb{Q}$, the solution $f_r$ in this section has the form
$f_r(\tau)=q^{r-\alpha/24}(1+O(q))$, where $\alpha$ is a~formal central charge
of~\eqref{eqn:fourth}.

\medskip\noindent
{\bf (a) $\mathbf{s=-48/5}$.}
A~fundamental system of solutions of~$(\flat_{-48/5})$
 is given by 

\begin{align*}
f_{0}&\=\frac{\psi_2(q)\Delta_2(q)\big(11\psi_1(q)^{10}-66\psi_1(q)^5\psi_2(q)^5-\psi_2(q)^{10}+H_2(q)\big)}{12\eta(q)^{42/5}}\\
&\=q^{7/20}(1+14 q+119 q^2+770 q^3+4088 q^4+18676 q^5+\cdots)\,,
\\
f_{4/5}&\=\frac{\psi_1(q)\Delta_2(q)\big(-\psi_1(q)^{10}+66\psi_1(q)^5\psi_2(q)^5+11\psi_2(q)^{10}+H_2(q)\big)}{84\eta(q)^{42/5}}\\
&\=q^{23/20}\left(1+14 q+\frac{769}{7}q^2+642 q^3+3103 q^4+13078 q^5+49616 q^6+\cdots\right)\,,
\\
f_{-1/2}&\=\frac{\psi_2(q)\big\{-H_2(q)^2+192\Delta_2(q)^2+H_2(q)\big(22\psi_1(q)^{10}-132\psi_1(q)^5\psi_2(q)^5-2\psi_2(q)^{10}\big)\big\}}{21\eta(q)^{42/5}}\\
&\=q^{-3/20}\left(1+40 q+381 q^2+2865 q^3+\frac{115789}{7} q^4+81261 q^5+348612 q^6+\cdots\right)\,,
\\
f_{-7/10}&\=\frac{\psi_1(q)\big\{H_2(q)^2-192\Delta_2(q)^2+H_2(q)\big(2\psi_1(q)^{10}-132\psi_1(q)^5\psi_2(q)^5-22\psi_2(q)^{10}\big)\big\}}{3\eta(q)^{42/5}}\\
&\=q^{-7/20}\left(1-63 q-1883 q^2-18403 q^3-122388 q^4-645036 q^5-2896215 q^6+\cdots\right)\,.
\end{align*}

\medskip\noindent
{\bf (b) $\mathbf{s=-38/5}$.}
A~fundamental system of solutions of~$(\flat_{-38/5})$
 is given by 
\begin{align*}
f_{-8/15}&\=\frac{\psi_1(q)G_1(I_{15}(q),\Delta_{15}(q),I_3(q),I_3(q^5))}{16\eta(q)^{32/5}}\\
&\=q^{-4/15}\left(1-56 q-776 q^2-5088 q^3-24932 q^4-\cdots\right)\,,
\\
f_{-1/3}&\=\frac{\psi_2(q)G_1(I_{15}(q),\Delta_{15}(q),-I_3(q),-I_3(q^5))}{128\eta(q)^{32/5}}\\
&\=q^{-1/15}\Big(1+15 q+100 q^2+\frac{4629}{8}q^3+2635 q^4+\cdots\Big)\,,
\\
f_{4/5}&\=\frac{\psi_1(q)G_2(I_{15}(q),\Delta_{15}(q),I_3(q),I_3(q^5),\psi_2(q)^5)}{936493073280\Delta_3(q)^2\eta(q)^{32/5}}\\
&\=q^{16/15}\Big(1+\frac{28}{3}q+\frac{164}{3}q^2+\frac{752}{3}q^3+\frac{1955}{2}q^4+\cdots\Big)\,,
\\
f_{0}&\=\frac{\psi_2(q)G_3(I_{15}(q),\Delta_{15}(q),I_3(q),I_3(q^5),\psi_2(q)^5)}{31216435776\Delta_3(q)^2\eta(q)^{32/5}}\\
&\=q^{4/15}\left(1+8 q+56 q^2+288 q^3+1254 q^4+\cdots\right)\,,
\end{align*}
where $G_1(u,x,y)$, $G_2(u,v,x,y,w)$ and $G_3(u,v,x,y,w)$ are homogeneous
polynomials of degree $3,5$ and $5$, respectively (they are given in Appendix~\ref{sec:poly}).

\medskip\noindent
{\bf (c) $\mathbf{s=-6/5}$.}
The following functions are solutions of~$(\flat_{-6/5})$:
\begin{align*}
f_{0}&\=1\,,
\\
f_{1/5}&\=\frac{1}{5}\int^q_0 \psi_1(q)^4\psi_2(q)\big(\psi_1(q)^5-3\psi_2(q)^5\big)\frac{dq}{q}
\=q^{1/5}\left(1+\frac{1}{3}q+\frac{12}{11}q^2+\frac{11}{16}q^3+\frac{4}{7}q^4+\cdots\right)\,,
\\
f_{4/5}&\=\frac{1}{15}\int^q_0 \psi_1(q)\psi_2(q)^4\big(12\psi_1(q)^5+4\psi_2(q)^5\big)\frac{dq}{q}
\=q^{4/5}\left(1+\frac{28}{27}q+\frac{4}{7}q^2+\frac{80}{57}q^3+\frac{5}{9}q^4+\cdots\right)\,.
\end{align*}
It seems unlikely that $f_{1/5}$ and~$f_{4/5}$ are modular forms
(since denominators of the Fourier coefficients of them looks like unbounded).
Since~$(\flat_{-6/5})$ has the index~0 as a~double root,~$(\flat_{-6/5})$ has a~solution with logarithmic terms.

However, modular functions
\begin{equation*}
\frac{f_{1/5}'}{\eta^4}\=\frac{\psi_1(q)^4\psi_2(q)\big(\psi_1(q)^5-3\psi_2(q)^5\big)}{5\eta(q)^4}\,,\quad
\frac{f_{4/5}'}{\eta^4}\=\frac{ \psi_1(q)\psi_2(q)^4\big(12\psi_1(q)^5+4\psi_2(q)^5\big)}{60\eta(q)^4}
\end{equation*}
 on $\Gamma(30)$  satisfy a~third order MLDE
\begin{equation*}
f'''-\frac{1}{2}E_2f''+\Big(\frac{1}{2}E_2'-\frac{9}{100}E_4\Big)f'+\frac{19}{5400}E_6f\=0\,.
\end{equation*}
The other solution of this MLDE is given by
\begin{equation*}
\frac{\psi_1(q)^{10}-36\psi_1(q)^5\psi_2(q)^5-\psi_2(q)^{10}}{\eta(q)^4}\=q^{-1/6}\big(1 - 26 q - 126 q^2 - 500 q^3-\cdots)\,.
\end{equation*}

\medskip\noindent
{\bf (d) $\mathbf{s=-3/5}$.}
A~fundamental system of solutions of~$(\flat_{-3/5})$
is given by 
\begin{align*}
f_{0}&=\frac{\theta(q)+\theta(q^5)}{2\eta(q)^{3/5}\psi_1(q)}
	=q^{-1/40}\big(1+q+q^2+2q^3+3q^4+\cdots\big)\,,
\\
f_{4/5}&=\frac{\theta(q)-\theta(q^5)}{2\eta(q)^{3/5}\psi_2(q)}
	=q^{31/40}\big(1+q+q^2+2q^3+2q^4+\cdots\big)\,,
\\
f_{1/4}&=\frac{\Delta_4(q)+\Delta_4(q^5)}{\eta(q)^{3/5}\psi_1(q)}
	=q^{9/40}\big(1+q+2q^2+2q^3+3q^4+\cdots\big)\,, 
\\
f_{1/20}&=\frac{\Delta_4(q)-\Delta_4(q^5)}{\eta(q)^{3/5}\psi_2(q)}
	=q^{1/40}\big(1+q^2+q^3+2q^4+2q^5+\cdots\big)\,.
\end{align*}

\medskip\noindent
{\bf (e) $\mathbf{s=2/5}$.}
A~fundamental system of solutions of~$(\flat_{2/5})$
is given by 
\begin{align*}
f_{0}&\=\frac{I_{15}(q)-\Delta_{15}(q)+I_3(q)}{2\eta(q)^{8/5}\psi_1(q)}
\=q^{-1/15}(1+4 q+8 q^2+20 q^3+37 q^4+\cdots)\,,
\\
f_{4/5}&\=\frac{-I_{15}(q)+\Delta_{15}(q)+I_3(q)}{6\eta(q)^{8/5}\psi_2(q)}
\=q^{11/15}\Big(1+\frac{4}{3}q+\frac{10}{3}q^2+\frac{20}{3}q^3+\frac{38}{3}q^4+\cdots\Big)\,,
\\
f_{1/3}&\=\frac{G(I_{15}(q),\Delta_{15}(q),I_3(q),I_3(q^5))}{864\eta(q)^{8/5}\psi_1(q)}
\=q^{4/15}\Big(1+\frac{5}{2}q+6 q^2+\frac{23}{2} q^3+23 q^4+\cdots\Big)\,,
\\
f_{2/15}&\=\frac{G(-I_{15}(q),-\Delta_{15}(q),I_3(q),I_3(q^5))}{432\eta(q)^{8/5}\psi_2(q)}
\=q^{1/15}(1+2 q+7 q^2+12 q^3+26 q^4+\cdots)\,,
\end{align*}
where 
\begin{equation*}
G(u,v,x,y)=60 (3 u^3 + v^3) + 81 (u - v) (u + v) x-(5 u + 9 v - 12 x) x^2 - 
 108 (u^2 + v^2) y - 25 (7 u + 3 v)y^2+15y^3.
\end{equation*}

\medskip\noindent
{\bf (f) $\mathbf{s=6/5}$.}
A~fundamental system of solutions of~$(\flat_{6/5})$
is given by 
\begin{align*}
f_{0}	&\=\frac{\psi_1(q)\big(\psi_1(q)^5+2\psi_2(q)^5\big)}{\eta(q)^{12/5}}
	\=q^{-1/10}(1 + 8 q + 23 q^2 + 68 q^3+\cdots)\,,
\\
f_{1/5}&\=\frac{\psi_2(q)\big(2\psi_1(q)^5-\psi_2(q)^5\big)}{2\eta(q)^{12/5}}
	\=q^{1/10}\left(1+\frac{9}{2}q+16q^2+38 q^3+\cdots\right)\,,
\\
f_{2/5}&\=\frac{\psi_1(q)^4\psi_2(q)^2}{\eta(q)^{12/5}}
	\=q^{3/10}\left(1 + 4 q + 12 q^2 + 30 q^3+\cdots\right)\,,
\\
f_{4/5}&\=\frac{\psi_1(q)^2\psi_2(q)^4}{\eta(q)^{12/5}}
	\=q^{7/10}\left(1 + 2 q + 7 q^2 + 16 q^3+\cdots\right).
\end{align*}

\medskip\noindent
{\bf (g) $\mathbf{s=12/5}$.}
A~fundamental system of solutions of~$(\flat_{12/5})$
 is given by 
\begin{align*}
f_{0}	&=\frac{1}{40\eta(q)^{18/5}\psi_1(q)}\{8 \psi_2(q)^5\big(18 \psi_1(q)^5 +\psi_2(q)^5\big)
+16 \big(\psi_1(q^2)^{10} + 21\psi_1(q^2)^5\psi_2(q^2)^5- 2\psi_2(q^2)^{10}\big)\\
&
\qquad\qquad\qquad\qquad\qquad+5H_2(q) + 19H_2(q^5)\}\\
	&\=q^{-3/20}(1+18 q+81 q^2+306 q^3+909 q^4+\cdots)\,,
\\
f_{4/5}&\=\frac{1}{360\eta(q)^{18/5}\psi_2(q)}\{-8 \psi_2(q)^5\big(18 \psi_1(q)^5 +\psi_2(q)^5\big)-16 \big(\psi_1(q^2)^{10} + 21\psi_1(q^2)^5\psi_2(q^2)^5\\
&\qquad\qquad\qquad\qquad\qquad- 2\psi_2(q^2)^{10}\big)+21H_2(q)-5H_2(q^5)\}\\
&\=q^{13/20}\Big(1+\frac{34}{9}q+17 q^2+50 q^3+\frac{428}{3}q^4+\cdots\Big)\,,
\\
f_{1/2}&\=\frac{1}{6\eta(q)^{18/5}\psi_1(q)^6}\{\Delta_2(q)\big(5\psi_1(q)^5+\psi_2(q)^5
+\psi_1(q^2)^5-\psi_2(q^2)^5\big)+\Delta_2(q^5)\big(7\psi_1(q)^5-\psi_2(q)^5\\
&\qquad\qquad\qquad\qquad\qquad-\psi_1(q^2)^5-7\psi_2(q^2)^5\big)\}\\
&\=q^{7/20}\Big(1+\frac{20}{3}q+27 q^2+89 q^3+\frac{766}{3}q^4+\cdots\Big),
\\
f_{3/10}&\=\frac{1}{2\eta(q)^{18/5}\psi_2(q)^6}\{\Delta_2(q)\big(-\psi_1(q)^5
+5\psi_2(q)^5+\psi_1(q^2)^5+\psi_2(q^2)^5\big)+\Delta_2(q^5)\big(\psi_1(q)^5\\
&\qquad\qquad\qquad\qquad\qquad+7\psi_2(q)^5+7\psi_1(q^2)^5-\psi_2(q^2)^5\big)\}\\
&\=q^{3/20}(1+9 q+39 q^2+131 q^3+387 q^4+\cdots).
\end{align*}

\medskip\noindent
{\bf (h) $\mathbf{s=18/5}$.}
A~fundamental system of solutions of~$(\flat_{18/5})$
is given by 
\begin{align*}
f_{0}	&\=\frac{\psi_1(q)^2\big(\psi_1(q)^{10}+24\psi_1(q)^{5}\psi_2(q)^5-6\psi_2(q)^{10}\big)}{\eta(q)^{24/5}} 
\=q^{-1/5}(1+36 q+240 q^2+1144 q^3+\cdots)\,,
\\
f_{2/5}&\=\frac{\psi_2(q)^2\big(6\psi_1(q)^{10}+24\psi_1(q)^{5}\psi_2(q)^5-\psi_2(q)^{10}\big)}{6\eta(q)^{24/5}}
\=q^{1/5}\left(1+14 q+\frac{461}{6}q^2+330 q^3+\cdots\right)\,,
\\
f_{3/5}&\=\frac{\psi_1(q)^4\psi_2(q)^3\big(4\psi_1(q)^{5}+3\psi_2(q)^{5}\big)}{4\eta(q)^{24/5}}
\=q^{2/5}\left(1+\frac{39}{4}q+51q^2+\frac{417}{2}q^3+\cdots\right)\,,
\\
f_{4/5}&\=\frac{\psi_1(q)^3\psi_2(q)^4\big(3\psi_1(q)^{5}-4\psi_2(q)^{5}\big)}{3\eta(q)^{24/5}}
\=q^{3/5}\left(1+\frac{20}{3}q+36 q^2+136 q^3+\cdots\right)\,.
\end{align*}

\medskip\noindent
{\bf (i) $\mathbf{s=22/5}$.}
A~fundamental system of solutions of~$(\flat_{22/5})$
is given by 
\begin{align*}
f_{0}&\=\frac{G_4(I_{15}(q),\Delta_{15}(q),I_3(q),I_3(q^5))}{24\psi_1(q)\eta(q)^{28/5}}
\=q^{-7/30}(1+56 q+476 q^2+2632 q^3+11270 q^4+\cdots)\,,
\\
f_{4/5}&\=\frac{G_4(-I_{15}(q),-\Delta_{15}(q),I_3(q),I_3(q^5))}{504\psi_2(q)\eta(q)^{28/5}}\\
&\=q^{17/30}\Big(1+\frac{28}{3}q+\frac{1196}{21}q^2+\frac{752}{3}q^3+\frac{2851}{3}q^4+\cdots\Big)\,,
\\
f_{2/3}&\=\frac{G_5(I_{15}(q),\Delta_{15}(q),I_3(q),I_3(q^5),\psi_2(q)^5)}{26309472\psi_1(q)\Delta_3(q)\eta(q)^{28/5}}
\=q^{13/30}(1+12 q+73 q^2+338 q^3+\frac{9070}{7} q^4+\cdots)\,,
\\
f_{7/15}&\=\frac{G_6(I_{15}(q),\Delta_{15}(q),I_3(q),I_3(q^5),\psi_2(q)^5)}{7516992\psi_2(q)\Delta_3(q)}\\
&\=q^{7/30}\Big(1+\frac{35}{2}q+112 q^2+\frac{1099}{2}q^3+2163 q^4+\cdots\Big)\,,
\end{align*}
where $G_4(u,v,x,y)$, $G_5(u,v,x,y,w)$ and $G_6(u,v,x,y,w)$ are homogeneous
polynomials of degree $3,4$ and $4$, respectively (they are written in  Appendix~\ref{sec:poly}).

\medskip\noindent
{\bf (j) $\mathbf{s=27/5}$.}
A~fundamental system of solutions of~$(\flat_{27/5})$
is given by 
\begin{align*}
f_{0}&\=\frac{G_7(\theta(q),\theta(q^5),\psi_1(q)^5,\psi_2(q)^5)}{10\psi_1(q)\eta(q)^{33/5}}
\=q^{-11/40}(1+99 q+1122 q^2+7425 q^3+37191 q^4+\cdots)\,,
\\
f_{4/5}&\=\frac{G_8(\theta(q),\theta(q^5),\psi_1(q)^5,\psi_2(q)^5)}{330\psi_2(q)\eta(q)^{33/5}}
\=q^{21/40}\Big(1+\frac{41}{3}q+98 q^2+513 q^3+2214 q^4+\cdots\Big)\,,
\\
f_{3/4}&\=\frac{G_9(\theta(q),\theta(q^5), \psi_1(q^4)^5, \psi_2(q^4)^5, \Delta_4(q)^3\Delta_4(q^5))}{9641984\psi_1(q)\Delta_4(q)\eta(q)^{33/5}}\\
&\=q^{19/40}\Big(1+15 q+\frac{1191}{11}q^2+577 q^3+2505 q^4+\cdots\Big)\,,
\\
f_{11/20}&\=\frac{G_{10}(\theta(q),\theta(q^5), \psi_1(q^4)^5, \psi_2(q^4)^5,\Delta_4(q)^3\Delta_4(q^5))}{7888896\psi_2(q)\Delta_4(q)\eta(q)^{33/5}}\\
&\=q^{11/40}\Big(1+22 q+\frac{506}{3}q^2+957 q^3+4279 q^4+\cdots\Big)\,,
\end{align*}
where $G_7(u,v,x,y)$, $G_8(u,v,x,y)$ $G_9(u,v,x,y,w)$ and $G_{10}(u,v,x,y,w)$ are homogeneous
polynomials of degree $7,7,8$ and $8$, respectively, which are given in Appendix~\ref{sec:poly}.

\medskip\noindent
{\bf (k) $\mathbf{s=6}$.}
The following functions are solutions of~$(\flat_{6})$:
\begin{align*}
f_{0}	&\=\frac{\psi_1(q)^3\big(\psi_1(q)^{15}+126\psi_1(q)^{10}\psi_2(q)^5+117\psi_1(q)^5\psi_2(q)^{10}-12\psi_2(q)^{15}\big)}{\eta(q)^{36/5}}\\
	&\=q^{-3/10}(1+144 q+1926 q^2+14160 q^3+77499 q^4+\cdots)\,,
\\
f_{3/5}&\=\frac{\psi_2(q)^3\big(12\psi_1(q)^{15}+117\psi_1(q)^{10}\psi_2(q)^5-126\psi_1(q)^5\psi_2(q)^{10}+\psi_2(q)^{15}\big)}{12\eta(q)^{36/5}}\\
	&\=q^{3/10}\left(1+\frac{99}{4}q+210 q^2+\frac{7739}{6}q^3+6195 q^4+\cdots\right)\,,
\\
f_{4/5}&\=\frac{\psi_1(q)^4\psi_2(q)^4\big(9\psi_1(q)^{10}+26\psi_1(q)^5\psi_2(q)^5-9\psi_2(q)^{10}\big)}{9\eta(q)^{36/5}}\\
	&\=q^{1/2}\left(1+\frac{152}{9}q+134 q^2+772 q^3+\frac{10778}{3}q^4+\cdots\right)\,.
\end{align*}
The function $\eta(q)^{12}=q^{1/2}+\cdots$ is a~modular form of weight~6 on~$\Gamma(2)$. 
Then we have  
\[
\eta(q)^{12}=\eta(q)^{36/5}\psi_1(q)\psi_2(q)
\big(\psi_1(q)^{10}-11\psi_1(q)^5\psi_2(q)^5-\psi_2(q)^{10}\big)
\]
since we see that the both-sides of this equality are solutions of $\vartheta_{6}(f)=0$ by using functional equations given in Appendix~\ref{sec:modularforms} {\bf(d)}, and the leading coefficients of both $q$-series are one.
 Hence, $f_r$ is a~modular function on~$\Gamma(10)$ for each $r=0,3/5,4/5$.

We have the MLDE~$(\flat_{6})$ by applying the Serre derivation 
 $\vartheta_6(F)=F'-(E_2/2)F$ to the third order MLDE
\begin{equation*}
f'''-\frac{1}{2}E_2f''+\Big(\frac{1}{2}E_2'-\frac{9}{100}E_4\Big)f'+\frac{9}{200}E_6f\=0,
\end{equation*}
which have the formal conformal weights $0,3/5,4/5$ and formal central charge $6$.

There is a~solution of~$(\flat_{6})$ with a~logarithmic term of the form
\begin{equation*}
(2\pi \sqrt{-1}\tau)\cdot f_{4/5}-q^{3/2}\Big(\frac{2530}{81}+\frac{191600}{693}q+\frac{8906965}{4788}q^2+\frac{5783927675}{632016}q^3+\frac{385857740243}{9927918}q^4+O(q^5)\Big).
\end{equation*}

\medskip\noindent
{\bf (l) $\mathbf{s=32/5}$.}
There exist solutions of~$(\flat_{32/5})$
of the forms
\begin{align*}
f_{0}&\=\frac{\psi_1(q)^4 \left(\psi_1(q)^{15}+171\psi_1(q)^{10}\psi_2(q)^5+247\psi_1(q)^5\psi_2(q)^{10}-57\psi_2(q)^{15}\right)}{\eta(q)^{38/5}}\\
&\=q^{-19/60}(1+190 q+2831 q^2+22306 q^3+129276 q^4+611724 q^5+\cdots)\,,
\\
f_{4/5}&\=\frac{\psi_2(q)^4\left(57\psi_1(q)^{15}+247\psi_1(q)^{10}\psi_2(q)^5-171\psi_1(q)^5\psi_2(q)^{10}+\psi_2(q)^{15}\right)}{57\eta(q)^{38/5}}\\
&\=q^{29/60}\Big(1+\frac{58}{3} q+\frac{493}{3}q^2+\frac{57362}{57}q^3+\frac{14761}{3}q^4+20734 q^5+\cdots\Big)\,.
\end{align*}
The functions $f_0$ and $f_{4/5}$ satisfy~$(\sharp_{\mu(19/5)})$
and the MLDE~$(\flat_{32/5})$ is rewritten as 
\begin{equation}\label{eqn:rewritten}
\vartheta_6\circ\vartheta_4(L(f))-\frac{11}{3600}E_4L(f)\=0\,,
\end{equation}
where $L(f)$ is the left-hand side of $(\sharp_{\mu(19/5)})$.
 Hence we have other two solutions which are given by 
\begin{align*}
f_{5/6}&\=\frac{5}{144}\left(30f_{4/5}(q)\frac{\psi_1(q)^4\psi_2(q)\big(\psi_1(q)^5-3\psi_2(q)^5\big)}{\eta(q)^4}-f_{0}(q)\int_{0}^{q}f_{4/5}(q_0) \eta(q_0)^{18/5}\psi_2(q_0)\frac{dq_0}{q_0}\right)\\
&\=q^{31/60}\Big(1+\frac{200}{11}q+\frac{28647}{187}q^2+\frac{3989341}{4301}q^3+\frac{562835919}{124729}q^4+\cdots\Big)\,,
\\
f_{19/30}&\=\frac{19}{144}\left(10f_{0}(q)\frac{\psi_1(q)\psi_2(q)^4\big(3\psi_1(q)^5+\psi_2(q)^5\big)}{19\eta(q)^4}-f_{4/5}(q)\int_{0}^{q}f_0(q_0) \eta(q_0)^{18/5}\psi_1(q_0)\frac{dq_0}{q_0}\right)\\
&\=q^{19/60}\Big(1+\frac{133}{5}q+\frac{13243}{55}q^2+\frac{1454051}{935} q^3+\frac{168154408}{21505}q^4+\cdots\Big)\,.
\end{align*}

The latter two solutions are most likely not modular functions 
(since the denominators of coefficients of $q$-series look like unbounded).

\medskip\noindent
{\bf (m) $\mathbf{s=54/5}$.}
A~fundamental system of solutions of~$(\flat_{54/5})$
is given by 
\begin{align*}
f_{0}&\=\frac{P(\psi_1(q),\psi_2(q))}{\eta(q)^{12}}
\=q^{-1/2}(1+36q+2490q^2+ 38360q^3+ 398715q^4+\cdots)\,,
\\
f_{4/5}&\=\frac{Q(\psi_1(q),\psi_2(q))}{3\eta(q)^{22}}
\=q^{3/10}\left(1+\frac{212}{3}q+1312 q^2+14480 q^3+\frac{350635}{3} q^4+\cdots\right)\,,
\\
f_{1}&\=\frac{R(\psi_1(q),\psi_2(q))}{132\eta(q)^{12}}
\=q^{1/2}\left(1+\frac{95}{2}q+\frac{25360}{33}q^2+\frac{346965}{44}q^3+\frac{666770}{11}q^4+\cdots\right)\,.
\\
f_{6/5}&\=\frac{S(\psi_1(q),\psi_2(q))}{22\eta(q)^{12}}
	\=q^{7/10}\left(1+\frac{372}{11}q+\frac{10779}{22}q^2+\frac{51626}{11}q^3+\frac{379482}{11}q^4+\cdots\right)\,,
\end{align*}
where
\begin{align*}
P(x,y)&= x^{30} + 6x^{25}y^5+1875x^{20}y^{10}
-6080x^{15}y^{15}+18135x^{10}y^{20}-1038x^5y^{25}-3y^{30},\\
Q(x,y)&= x^{6}y^4 (3x^{20} +134 x^{15}y^5 +57x^{10} y^{10} + 216x^5y^{15} - 22y^{20}),\\
R(x,y)&=y^5(132x^{30}+2970x^{25}y^{5}-1520x^{20}y^{10}+7035x^{15}y^{15}
-390x^{5}y^{25}-y^{30}),\\
S(x,y)&=x^4y^6(22x^{20}+216x^{15}y^5-57 x^{10}y^{10}+134x^5y^{15}-3y^{20}).
\end{align*}

\medskip\noindent
{\bf (n) $\mathbf{s=18}$.}
A~fundamental system of solutions of~$(\flat_{18/5})$
is given by 
\begin{align*}
f_{-4/5}&\=\frac{G_{11}(\psi_1(q),\psi_2(q))}{\eta(q)^{12}}
   \=q^{-4/5}(1-216 q-90984 q^2-4550240 q^3-107053506 q^4+\cdots)\,,
\\
f_{0}&\=1\,,
\\
f_{4/5}&\=\frac{G_{12}(\psi_1(q),\psi_2(q))}{4959\eta(q)^{96/5}}
\=q^{4/5}\Big(1+\frac{248}{3}q+\frac{22360}{9}q^2+\frac{837856}{19}q^3+\frac{1680020}{3} q^4+\cdots\Big)\,.
   \\
f_{1}&\=\frac{G_{13}(\psi_1(q),\psi_2(q))}{4408\eta(q)^{96/5}}
   \=q\Big(1+63 q+\frac{31596}{19}q^2+\frac{4150739}{152}q^3+\frac{181085301}{551}q^4+\cdots)\,,
\end{align*}
where $G_{11}(x,y)$, $G_{12}(x,y)$ and $G_{13}(x,y)$ are homogeneous
polynomials of degree $23,23$ and $48$, respectively.
They are given in Appendix~\ref{sec:poly}.


\medskip\noindent
{\bf (o) $\mathbf{s=66/5}$.}
A~fundamental system of solutions of~$(\flat_{66/5})$
is given by 
\begin{align*}
f_{-1/5}&\=-\frac{P(\psi_1(q),\psi_2(q))}{4\eta(q)^{12}}
\=q^{-1/2}\Big(1 -\frac{315}{4}q-11570q^2-\frac{456545}{2}q^3-2506845 q^4-\cdots\Big)\,,
\\
f_{0}&\=\frac{Q(\psi_1(q),\psi_2(q))}{\eta(q)^{12}}
\=q^{-3/10}(1+232 q+4902 q^2+57276 q^3+490507 q^4+\cdots)\,,
\\
f_{4/5}&\=\frac{R(\psi_1(q),\psi_2(q))}{1653\eta(q)^{12}}
\=q^{1/2}\Big(1+\frac{80}{3}q+\frac{1010}{3}q^2+\frac{57840}{19}q^3+\frac{414330}{19}q^4+\cdots\Big)\,,
\\
f_{8/5}&\=\frac{S(\psi_1(q),\psi_2(q))}{551\eta(q)^{12}}
\=q^{13/10}\Big(1+24 q+\frac{5458}{19}q^2+\frac{45800}{19}q^3+\frac{8847495}{551}q^4+\cdots\Big)\,,
\end{align*}
where
\begin{align*}
P(x,y)&\=x^5 \left(4x^{25}-435x^{20}y^5-37265x^{15}y^{10}-44080 x^{10}y^{15}-2755x^5y^{20}+1653y^{25}\right),\\
Q(x,y)&\=x^9y\left(x^{20}+203x^{15}y^5-406x^{10}y^{10}-1653x^5y^{15}
+551y^{20}\right),\\
R(x,y)&\=y^5 \big(1653x^{25}+2755x^{20}y^5-44080x^{15}y^{10}+37265x^{10}y^{15}
-435x^5y^{20}-4y^{25}\big),
\\
S(x,y)&\=xy^9 \left(551x^{20}+1653x^{15}y^5-406x^{10}y^{10}-203x^5y^{15}+y^{20}\right).
\end{align*}

\medskip\noindent
{\bf (p) $\mathbf{s=-6}$.}
A~fundamental system of solutions of~$(\flat_{-6})$
is given by 
\begin{align*}
f_{-1/5}&\=\frac{\psi_1(q)^2\left(\psi_1(q)^{10}-66\psi_1(q)^5\psi_2(q)^5-11\psi_2(q)^{10}\right)}{\eta(q)^{24/5}}\\
&\=q^{-1/5}(1-54 q-395 q^2-1836 q^3-6950 q^4-\cdots)\,,
\\
f_{0}&\=\frac{\psi_1(q)^6\psi_2(q)\left(2\psi_1(q)^5+11\psi_2(q)^5\right)}{2\eta(q)^{24/5}}
\=1 + \frac{33}{2}q+100 q^2 +\frac{893}{2}q^3+1629q^4 +\cdots\,.
\\
f_{1/5}&\=\frac{\psi_2(q)^2 \left(11\psi_1(q)^{10}-66\psi_1(q)^5\psi_2(q)^5-\psi_2(q)^{10}\right)}{11\eta(q)^{24/5}}\\
&\=q^{1/5}\Big(1+4 q+\frac{296}{11}q^2+110 q^3+\frac{4344}{11}q^4+\cdots\Big)\,,
\\
f_{1}&\=\frac{\psi_1(q)\psi_2(q)^6 \left(11\psi_1(q)^5-2\psi_2(q)^5\right)}{11\eta(q)^{24/5}}
\=q\Big(1+\frac{68}{11}q+\frac{299}{11}q^2+\frac{1102}{11}q^3+\frac{3511}{11} q^4 +\cdots\Big)\,,
\end{align*}

\medskip\noindent
{\bf (q) $\mathbf{s=-8/5}$.}
The following two functions satisfy~$(\flat_{-8/5})$:
\begin{align*}
f_{0}&\=\frac{\psi_2(q)}{\eta(q)^{2/5}} \=q^{11/60}(1+q^2+q^3+q^4+q^5+\cdots)\,,
\\
f_{-1/5}&\=\frac{\psi_1(q)}{\eta(q)^{2/5}} \=q^{-1/60}(1+q+q^2+q^3+2 q^4+2 q^5+\cdots)\,.
\end{align*}
We see that $f_0$ and $f_{-1/5}$ also satisfy $(\sharp_{\mu(1/5)})$,
and $(\flat_{-8/5})$ is rewritten as
$
\vartheta_6\circ\vartheta_4(L_{1/5}(f))-
(551/3600) E_4L_{1/5}(f)\=0\,,
$
where $L_{1/5}(f)$ denotes the left-hand side of $(\sharp_{\mu(1/5)})$.
 Hence two other solutions are given by 
\begin{align*}
&f_{-1/6}
\\
&\=\frac{1}{36}\bigg\{30\frac{\psi_1(q)^7\psi_2(q)^3\big(\psi(q)_1^5-3\psi_2(q)^5\big)\big(\psi_1(q)^{10}-11\psi_1(q)^5\psi_2(q)^5-\psi_2(q)^{10}\big)^2}{\eta(q)^{14}}\\
&\quad-f_0(q)\int_{0}^{q}\frac{\psi_1(q_0)^5\big(\psi_1(q_0)^{15}+171\psi_1(q_0)^{10}\psi_2(q_0)^5+247\psi_1(q_0)^5 \psi_2(q_0)^{10}-57\psi_2(q_0)^{15}\big)}{\eta(q_0)^{4}}\frac{dq_0}{q_0}\bigg\}\\
&\=q^{1/60}\Big(1-\frac{2}{5}q+\frac{1}{11}q^2+\frac{26}{85}q^3+\frac{434}{1265}q^4+\frac{9824}{27115}q^5+\cdots\Big)\,,
\\
&f_{19/30}
\\
&\=\frac{5}{36}\bigg\{10\frac{\psi_1(q)^3\psi_2(q)^7\big(3\psi_1(q)^5+\psi_2(q)^5\big)\big(\psi_1(q)^{10}-11\psi_q(1)^5\psi_2(q)^5-\psi_2(q)^{10}\big)^2}{\eta(q)^{14}}\\
&\quad-f_{-1/5}(q)\int_{0}^{q}\frac{\psi_1(q_0)^5\big(57\psi_1(q_0)^{15}+247\psi_1(q_0)^{10}\psi_2(q_0)^5-171\psi_1(q_0)^5\psi_2(q_0)^{10}+\psi_2(q_0)^{15}\big)}{3\eta(q_0)^4}
\frac{dq_0}{q_0}\bigg\}\\
&\=q^{49/60}\Big(1+\frac{38}{33}q+\frac{371}{561}q^2+\frac{22558}{12903}q^3+\frac{383219}{374187}q^4+\frac{938830}{374187}q^5+\cdots\Big)\,.
\end{align*}
It is most likely that
the last two solutions are not modular functions 
(since the denominators of coefficients of $q$-series look like unbounded).

\section{Bases of Modular linear differential equations with solutions of quasimodular forms of positive depths}
\label{sec:quasimodular}

In this appendix we construct a~fundamental system of solutions of~\eqref{eqn:fourth}
when~\eqref{eqn:fourth} has a~quasimodular solution
of positive depth of CFT type.
Among the candidates for $s$ given in~\eqref{eqn:cand5}, the six
numbers 
$-318/5$, $-198/5$, $-138/5$, $-78/5$, $-18/5$ and $42/5$
give such cases.

\subsection{Quasimodular forms and solutions with logarithmic terms}\label{qm-and-log}
If a~MLDE $L(f)=0$ of weight $k$ has 
solutions of quasimodular forms of weight $k+r$ and depth $r\, (>0)$, 
then there are solutions of $L(f)=0$ with logarithmic terms (see~\cite{KNS}).

For a~congruence subgroup~$\Gamma\subset\mathrm{SL}_2(\mathbb{Z})$, we now suppose  that a~quasimodular form $F_k:=A_k E_2+B_k$ of weight~$k+1$ and depth~$1$ on~$\Gamma$ 
is a~solution of a~MLDE (of weight $k$ on~$\mathrm{SL}_2(\mathbb{Z})$),
where $A_k$ and $B_k$ are modular forms on~$\Gamma$  of weight~$k-1$ and $k+1$, respectively. 
Then the function $G_k=G(F_k)=(2\pi\sqrt{-1})^{-1}(F_k\log q +12 A_k)$ is a solution 
because 
\begin{equation*}
F_k\bigg|_k \begin{pmatrix}a&b\\ c&d\end{pmatrix}\,=c\,G_k+d\,F_k \quad\text{for}\ \begin{pmatrix}a&b\\ c&d\end{pmatrix}\in\Gamma
\end{equation*}
is a solution by modular invariance property.
We find that
\begin{equation*}
\left. \begin{pmatrix}F_k\\ G_k\end{pmatrix}\right|_k
\begin{pmatrix}a&b\\ c&d\end{pmatrix}=
\begin{pmatrix}d&c\\ b&a\end{pmatrix}
\begin{pmatrix}F_k\\ G_k\end{pmatrix} \ \text{for\ each}\  
\begin{pmatrix}a&b\\ c&d\end{pmatrix}\in\Gamma\,.
\end{equation*}

\subsection{Solutions of modular linear differential equations of quasimodular forms of positive depths}

In this subappendix we list solutions of~$(\flat_s)$ of quasimodular forms,
where $s$ is listed in the beginning of Appendix~\ref{sec:quasimodular}.
By using construction in Subappendix~\ref{qm-and-log}, we have a~fundamental
system of the space of solutions of~$(\flat_s)$.

\medskip\noindent
{\bf (a) $\mathbf{s=-318/5}$.}
Let $\psi_1$ and $\psi_2$ be the modular functions defined in Appendix~\ref{sec:modularforms}.
A set of two linearly independent solutions  of~$(\flat_{-318/5})$
 is given by 
\begin{align*}
f_{0}&\=\frac{F_1\big(\psi_1(q),\psi_2(q)\big)'}{50841895104\eta(q)^{192/5}}+\frac{F_2\big(\psi_1(q),\psi_2(q)\big)}{419325701671800\eta(q)^{312/5}}\\
&\= q^{13/5}(1+260 q+30056 q^2+2119676 q^3+104823121 q^4+\cdots)\,,
\\
f_{4/5}&\=\frac{F_1\big(\psi_2(q),-\psi_1(q)\big)'}{28346417433013680\eta(q)^{312/5}}+\frac{F_2\big(\psi_2(q),-\psi_1(q)\big)}{5451234121733400\eta(q)^{312/5}}\\
&\=q^{17/5}(1+236 q+25306 q^2+1680916 q^3+79143742 q^4+\cdots\Big)\,,
\end{align*}
where $F_1(x,y)$ and $F_2(x,y)$ are homogeneous
polynomials of degree $151$ and $161$, respectively.
(They are given in Appendix~\ref{sec:poly}).
The solutions $f_0$ and $f_{4/5}$ are quasimodular forms of weight 1 and depth 1.
The system $(f_0,f_{4/5},G(f_0),G(f_{4/5}))$ is a~fundamental system of solutions
of $(\flat_{-318/5})$.
Here $G(*)$ is a~function with a~logarithmic term defined in Subappendix~\ref{qm-and-log}.

\medskip\noindent
{\bf (b) $\mathbf{s=-198/5}$.}
A~set of two linearly independent solutions of~$(\flat_{-198/5})$
 is given by 
\begin{align*}
f_{0}&\=\frac{F_3\big(\psi_1(q),\psi_2(q)\big)'}{50841895104\eta(q)^{192/5}}+\frac{F_4\big(\psi_1(q),\psi_2(q)\big)}{15888092220\eta(q)^{192/5}}\\
&\= q^{8/5}(1+144 q+8880 q^2+331840 q^3+8770284 q^4+\cdots)\,,
\\
f_{4/5}&\=\frac{F_3\big(\psi_2(q),-\psi_1(q)\big)'}{4236824592\eta(q)^{192/5}}+\frac{F_4\big(\psi_2(q),-\psi_1(q)\big)}{1324007685\eta(q)^{192/5}}\\
&\=q^{12/5}\Big(1+\frac{380}{3}q+7164 q^2+251344 q^3+\frac{18958205}{3}q^4+\cdots\Big)\,,
\end{align*}
where $F_3(x,y)$ and $F_4(x,y)$ are homogeneous
polynomials of degree $91$ and $101$, respectively (they are given in Appendix~\ref{sec:poly}).
The solutions $f_0$ and $f_{4/5}$ are quasimodular forms of weight 1 and depth 1.
The system $(f_0,f_{4/5},G(f_0),G(f_{4/5}))$ is a~fundamental system of solutions
of $(\flat_{-198/5})$.

\medskip\noindent
{\bf (c) $\mathbf{s=-138/5}$.}
A~set of two linearly independent solutions of~$(\flat_{-138/5})$
 is given by 
\begin{align*}
f_{0}&\=\frac{P\big(\psi_1(q),\psi_2(q)\big)'}{48360312\eta(q)^{132/5}}+\frac{Q\big(\psi_1(q),\psi_2(q)\big)}{21981960\eta(q)^{132/5}}\\
&\=q^{11/10}(1+88 q+3256 q^2+74360 q^3+1232814 q^4+\cdots)\,,
\\
f_{4/5}&\=\frac{P\big(\psi_2(q),-\psi_1(q)\big)'}{4396392\eta(q)^{132/5}}+\frac{Q\big(\psi_2(q),-\psi_1(q)\big)}{1998360\eta(q)^{132/5}}\\
&\=q^{19/10}(1+76 q+2584 q^2+55568 q^3+876329 q^4+\cdots)\,,
\end{align*}
where 
{\small
\begin{align*}
P(x,y)&\=x(2 x^{60}-423 x^{55} y^5+62386 x^{50} y^{10}+17130550 x^{45} y^{15}+257495595 x^{40} y^{20}\\
&\qquad+449723758 x^{35} y^{25}+110860458 x^{30} y^{30}+274470804 x^{25} y^{35}-44451050 x^{20} y^{40}\\
&\qquad-31757435 x^{15} y^{45}+1691208 x^{10} y^{50}-3542 x^5 y^{55}-11 y^{60})\,,\\
Q(x,y)&\=x y^5(159 x^{65}-51748 x^{60} y^5-1801195 x^{55} y^{10}-11218651 x^{50} y^{15}+431478190 x^{45} y^{20}\\
&\qquad-348592259 x^{40} y^{25}+1688812058 x^{35} y^{30}+402533490 x^{30} y^{35}+852888611 x^{25} y^{40}\\
&\qquad-3573970 x^{20} y^{45}-4304879 x^{15} y^{50}+269233 x^{10} y^{55}+1760 x^5 y^{60}+y^{65})\,.
\end{align*}
}
The solutions $f_0$ and $f_{4/5}$ are quasimodular forms of weight 1 and depth 1.
Moreover, the system $(f_0, f_{4/5}, G(f_0), G(f_{4/5}))$ is a~fundamental system of solutions
of $(\flat_{-138/5})$.

\medskip\noindent
{\bf (d) $\mathbf{s=-78/5}$.}
A~set of linearly independent solutions  of~$(\flat_{-78/5})$
 is given by 
\begin{align*}
f_{0}&\=\frac{P(\psi_1(q),\psi_2(q))'}{2604\eta(q)^{72/5}}
-\frac{Q(\psi_1(q),\psi_2(q))}{2170\eta(q)^{72/5}}
\=q^{3/5}(1+36 q+576 q^2+6312 q^3+53739 q^4+\cdots)\,,
\\
f_{4/5}&\=-\frac{P(\psi_2(q),-\psi_1(q))}{23436\eta(q)^{72/5}}
+\frac{Q(\psi_2(q),-\psi_1(q))}{19530\eta(q)^{72/5}}\\
&\=q^{7/5}\Big(1+\frac{284}{9}q+476 q^2+4888 q^3+\frac{117116}{3}q^4 +\cdots\Big)\,,
\end{align*}
where
\begin{align*}
P(x,y)&\=y (6x^{30}+1642x^{25}y^5-1440x^{20}y^{10}
-20615x^{15}y^{15}+12665x^{10}y^{20}-123x^5y^{25}-y^{30}),\\
Q(x,y)&\=x^5 y(x^{10}-11x^5y^5-y^{10}) (x^{25}-435x^{20}y^5-6670x^{15}y^{10}
-3335x^5y^{20}+87y^{25}).
\end{align*}
The solutions $f_0$ and $f_{4/5}$ are quasimodular forms of weight 1 and depth 1.
Moreover, the system $(f_0,f_{4/5},G(f_0),G(f_{4/5}))$ is a~fundamental system of solutions
of $(\flat_{-78/5})$.

\medskip\noindent
{\bf (e) $\mathbf{s=-18/5}$.}
A~set of linearly independent solutions  of~$(\flat_{-18/5})$
 is given by 
\begin{align*}
f_{0}&\=\frac{5\psi_2(q)'}{\eta(q)^{12/5}}=q^{1/10}(1+6 q^2+16 q^3+36 q^4+72 q^5+\cdots)\,,
\\
f_{4/5}&\=\frac{5\psi_1(q)'}{3\eta(q)^{12/5}}=q^{9/10}\Big(1+\frac{8}{3}q+6 q^2+16 q^3+\frac{101}{3} q^4+72 q^5+\cdots\Big)\,.
\end{align*}
The solutions $f_0$ and $f_{4/5}$ are quasimodular forms of weight 1 and depth 1.
Moreover, the system $(f_0,f_{4/5},G(f_0),G(f_{4/5}))$ is a~fundamental system of solutions
of $(\flat_{-18/5})$.

\medskip\noindent
{\bf (f) $\mathbf{s=42/5}$.}
A~set of linearly independent solutions  of~$(\flat_{42/5})$
is given by 
\begin{align*}
f_0&\=\frac{5\left\{\psi_2(q)^4 \left(57\psi_1(q)^{15}+247\psi_1(q)^{10}\psi_2(q)^5-171 \psi_1(q)^5\psi_2(q)^{10}+\psi_2(q)^{15}\right)\right\}'}{28\eta(q)^{48/5}}\\
&\=q^{2/5}(1+36 q+436 q^2+3536 q^3+21912 q^4+113760 q^5+\cdots)\,,
\\
f_{1/5}&\=\frac{5\left\{\psi_1(q)^4 \left(\psi_1(q)^{15}+171\psi_1(q)^{10}\psi_2(q)^5+247 \psi_1(q)^5\psi_2(q)^{10}-57\psi_2(q)^{15}\right)\right\}'}{912\eta(q)^{48/5}}\\
&\= q^{3/5}\Big(1+25 q+276 q^2+\frac{8379}{4}q^3+12481 q^4+62859 q^5+\cdots\Big)\,.
\end{align*}
The solutions $f_0$ and $f_{1/5}$ are quasimodular forms of weight 1 and depth 1.
Moreover, the system $(f_0,f_{4/5},G(f_0),G(f_{4/5}))$ is a~fundamental system of solutions
of $(\flat_{-42/5})$.

\begin{remark}
The numbers $s=-318/5,-198/5,-138/5,-78/5,-48/5$ and $-38/5$ in~\eqref{eqn:cand5} has the form
$s=-2(5h^\vee+9)/5$ with the dual Coxeter numbers $h^\vee$ of 
the Lie algebras $E_8,E_7,E_6,D_4,A_2$ and $A_1$
(the simply-laced Lie algebras in the Deligne exceptional series).
When we substitute $h^\vee=3/2,4,9$ and $24$ (the dual Coxeter numbers of
the non simply-laced Lie algebras $G_2$ and $F_4$ in the Deligne exceptional series
and formal dual Coxeter numbers of $A_{1/2}$ and $E_{7+1/2}$ \cite{LM,Kaw1}) in the above expression of $s$, we have
$s=-33/5,-58/5,-108/5$ and $-258/5$.
Let $s$ be one of the numbers $-33/5,-58/5,-108/5$ and $-258/5$.
We believe that the modular linear differential equation~\eqref{eqn:fourth}
has a~solution of the form $q^{\alpha_1}(5+\sum_{n=1}^\infty a_nq^n)$
with $a_n\in\mathbb{Z}_{\geq 0}$.
We checked it for small values of $n$: we see that $a_n\in\mathbb{Z}_{\geq 0}$ for $1\leq n<100$.

The MLDEs $(\flat_{282/5})$ and $(\flat_{132/5})$ can also have solutions of the form $q^{\alpha_1}(5+\sum_{n=1}^\infty a_nq^n)$
with $a_n\in\mathbb{Z}_{\geq 0}$.
\end{remark}

\section{The polynomials which expresses the solutions of modular linear differential equations}\label{sec:poly}

In this appendix we define the polynomials appearing in the solutions 
of~\eqref{eqn:fourth} in Appendices~\ref{sec:basisrational}--\ref{sec:quasimodular}.

\begin{align*}
G_1(u,v,x,y)&\=180 (7 u^3 + v^3)-9 (11 u^2 - 31 v^2) x\\
&\qquad-x^2 (213 u + 39 v + 34 x)+162 (u^2 + v^2) y-5 y^2 (195 u - 15 v + 17 y)\,,
\\
 G_2(u,v,x,y,w)&\=-720 (223958344 u^5 + 406176133 v^5)\\
 &\qquad+(3184319633 u^4 - 197068307233 v^4 + 33362124000 w^4) x\\
 &\qquad+60 (1353945577 u^3 + 878294370 v^3) x^2+6 (6929323501 v^2 - 1198443520 w^2) x^3\\
 &\qquad-x^4 (1858938780 v + 2043544673 x)+15 (2134148833 u^4 - 1674476673 v^4) y\\
 &\qquad-1500 (154243777 u^3 + 15399378 v^3) y^2-10 (11725793962 u^2 + 3577812017 v^2) y^3\\
 &\qquad+7500 (43261956 u + 12682717 v) y^4+71019200725 y^5\,,
\\
G_3(u,v,x,y,w)&\=144 (699752744 u^5 - 2362467 v^5)\\
&\qquad+(361371889 u^4 - 1097126369 v^4 + 1211988000 w^4) x\\
&\qquad+12 (498413977 u^3 + 838363630 v^3) x^2+6 (574859825 v^2 + 326289904 w^2) x^3\\
&\qquad+(1446719244 v - 436791181 x) x^4+15 (158488301 u^4 + 365080659 v^4) y\\
&\qquad-300 (475629839 u^3 + 83270830 v^3) y^2-10 (940591436 u^2 + 521391871 v^2) y^3\\
&\qquad+1500 (32993592 u + 10616473 v) y^4+5519145425 y^5\,,\\
%
%
G_4(u,v,x,y)&\=180 u^3 + 27 u^2 (3 x - 4 y) + 7 u (x^2 - 25 y^2)\\
&\qquad + 
 3 (20 v^3 + 8 x^3 + 5 y^3 - 9 v^2 (3 x + 4 y) - v (7 x^2 + 25 y^2))\,,
\\
G_5(u,v,x,y,w)&\=927276 u^4 + 27002724 v^4 - 1508220 u^2 y^2 + 9998940 v^2 y^2\\
&\qquad-8 u^3 (719719 x + 1768043 y) - 8 v^3 (1664689 x + 9604649 y)\\
&\qquad +4 u (1080301 x^3 + 3999985 y^3)+ 4 v (390017 x^3 + 7292255 y^3)\\
&\qquad -49 (120000 w^4 - 137104 w^2 x^2 + 7621 x^4 - 10925 y^4)\,,
\\
G_6(u,v,x,y,w)&\=927276 u^4 + 27002724 v^4 - 1508220 u^2 y^2 + 9998940 v^2 y^2\\
&\qquad-8 u^3 (257897 x + 1184773 y) - 8 v^3 (733055 x + 6576439 y)\\
&\qquad +4 u (548167 x^3 + 2441935 y^3)+ 4 v (222227 x^3 + 2618105 y^3)\\
&\qquad-49 (120000 w^4 - 137104 w^2 x^2 + 7621 x^4 - 10925 y^4)\,,\\
%
%
G_7(u,v,x,y)&\=12 u^5 v^2 + 60 u^4v^3+3 u^2 v (4 v^4 + 61 x^2 - 150 x y - 39 y^2)\\
&\qquad-60 u v^2 (5 v^4 + 5 x^2 + 5 x y + 3 y^2)-12 v^3 (5 v^4 - 4 x^2 + 44 x y + 4 y^2)\\
&\qquad+ u^3 (300 v^4 + 55 x^2 - 18 x y + 15 y^2),
\\
G_8(u,v,x,y)&\=12 u^5 v^2 - 60 u^4 v^3 + 12 v^3 (5 v^4 + 4 x^2 - 44 x y - 4 y^2)\\
&\qquad-60 u v^2 (5 v^4 + 3 x^2 - 5 x y + 5 y^2) +u^3 (300 v^4 + 15 x^2 + 18 x y + 55 y^2)\\
&\qquad-3 u^2 v (4 v^4 - 39 x^2 + 150 x y + 61 y^2),
\\
G_9(u,v,x,y,w)&\=738713 u^8+105264 u^7 v-109300 u^6 v^2-339760 u^5 v^3\\
&\qquad+2 u^4 \left(764115 v^4+1803136 w-708480 x^2\right)+80 u^3 \left(33729 v^5-349760 v w\right)\\
&\qquad-20 u^2 \left(136625 v^6-763776 v^2 w\right)-2546000 u v^7+3415625 v^8\\
&\qquad+6400 v^4 \left(695 w+1216 x^2-109 y^2\right)\\
&\qquad+64 \big\{2023440 w^2-8 w \left(8049 x^2+2936 x y+32151 y^2\right)\\
&\qquad\quad-142563 x^4-33071418 x^2 y^2-228288 x y^3-34771 y^4\big\}\,,
\\
G_{10}(u,v,x,y,w)&\=1564813 u^8-315792 u^7 v-127492 u^6 v^2+1019280 u^5 v^3\\
&\qquad-6 u^4 \left(194875 v^4+1803136 w-708480 x^2\right)-16 u^3 \left(505935 v^5+2039872 v w\right)\\
&\qquad-20 u^2 \left(159365 v^6+2291328 v^2 w\right)+7638000 u v^7+3984125 v^8\\
&\qquad-19200 v^4 \left(695 w+1216 x^2-109 y^2\right)\\
&\qquad+64 \big\{(1558352 w^2+40 w \left(29311 x^2+301704 x y-5191 y^2\right)\\
&\qquad\quad+277889 x^4-65865346 x^2 y^2+684864 x y^3-45487 y^4\big\}\,,\\
G_{11}(x,y)&\=x^8 (x^{15} + 171x^{10}y^5 + 247x^5y^{10} - 57y^{15}) (x^{25} - 435x^{20}y^{5} -  6670x^{15}y^{10} \\
&\qquad- 3335x^5y^{20} + 87y^{25}),\\
G_{12}(x,y)&\=y^8 \big(57 x^{15} + 247x^{10} y^5 - 171 x^5 y^{10} + y^{15}\big) \big(87 x^{25} + 3335 x^{20} y^5 + 
   6670 x^{10} y^{15} \\
   &\qquad- 435x^5 y^{20} -y^{25})\\
G_{13}(x,y)&\=x^4 y^9 (4408x^{35} + 105792x^{30}y^{5} + 255432x^{25}y^{10} - 39585x^{20}y^{15} + 
   224280x^{15}y^{20}\\
   &\qquad - 110124x^{10}y^{25} 
+ 4144x^{5}y^{30} - 6y^{35}).
\end{align*}

{\small
\begin{align*}
F_1(x,y)&\=y(26 x^{150}-14027 x^{145} y^5+4302519 x^{140} y^{10}-1108131323 x^{135} y^{15}+352658709221 x^{130} y^{20}\\
&\qquad+410367539820195 x^{125} y^{25}+40445085344188305 x^{120} y^{30}+1299280606676985415 x^{115} y^{35}\\
&\qquad+15619328979178395195 x^{110} y^{40}+55612071446837334235 x^{105} y^{45}\\
&\qquad-87018500083008224035 x^{100} y^{50}-533269157005860756105 x^{95}y^{55}\\
&\qquad-788818605648335112065 x^{90} y^{60}-1013858166164551300020 x^{85} y^{65}\\
&\qquad-296448112259269579710 x^{80} y^{70}-1142271481069396890795 x^{75} y^{75}\\
&\qquad+1168196293014645641265 x^{70} y^{80}-2113215283984202178330 x^{65} y^{85}\\
&\qquad+2233117451140029299360 x^{60} y^{90}-1786660513705581667220 x^{55} y^{95}\\
&\qquad+791868758851817951430 x^{50} y^{100}-151112565855362432510 x^{45} y^{105}\\
&\qquad+12466416846335713220 x^{40} y^{110}-431194652630175240 x^{35} y^{115}\\
&\qquad+5161178221901730 x^{30} y^{120}-5636816703218 x^{25} y^{125}\\
&\qquad-22864978164 x^{20} y^{130}-122736542 x^{15} y^{135}-623536 x^{10} y^{140}-2403 x^5 y^{145}-5 y^{150})\,,\\
F_2(x,y)&\=x^5 y(-x^{155}+2795 x^{150} y^5-1610905 x^{145} y^{10}+626661790 x^{140} y^{15}-272934105110 x^{135} y^{20}\\
&\qquad+5921185002218 x^{130} y^{25}+2403876207873450 x^{125} y^{30}+76784270584164950 x^{120} y^{35}\\
&\qquad+772577150281220900 x^{115} y^{40}-2601547321924922600 x^{110} y^{45}\\
&\qquad-132050328432197323010 x^{105} y^{50}-761932811447226146950 x^{100} y^{55}\\
&\qquad-646462771950208704200 x^{95} y^{60}-2253288292561296417525 x^{90} y^{65}\\
&\qquad-1120714651267256252025 x^{85} y^{70}-4859622107798545477335 x^{80} y^{75}\\
&\qquad+1377006603588253977975 x^{75} y^{80}-5767727038509251934525 x^{70} y^{85}\\
&\qquad+2918064034868992877075 x^{65} y^{90}-2471409054146940922300 x^{60} y^{95}\\
&\qquad+203085067837087672160 x^{55} y^{100}-317362540657481937650 x^{50} y^{105}\\
&\qquad+61492360597697519350 x^{45} y^{110}+350110675044144200 x^{40} y^{115}\\
&\qquad-419469534314529300 x^{35} y^{120}+21455464506696458 x^{30} y^{125}\\
&\qquad-543093587090560 x^{25} y^{130}+5212276736290 x^{20} y^{135}+16214520280 x^{15} y^{140}\\
&\qquad+62528105 x^{10} y^{145}+202921 x^5 y^{150}+375 y^{155})\,,\\
%
F_3(x,y)&\=-x (3 x^{90}-903 x^{85} y^5+159961 x^{80} y^{10}-28650647 x^{75} y^{15}-13575756015 x^{70} y^{20}\\
&\qquad-474499473257 x^{65} y^{25}-4010788885483 x^{60} y^{30}-7747481577039 x^{55} y^{35}-5318572158667 x^{50} y^{40}\\
&\qquad-5669746887385 x^{45} y^{45}+1236811165611 x^{40} y^{50}-3418602513421 x^{35} y^{55}+633550350937 x^{30} y^{60}\\
&\qquad+416362598871 x^{25} y^{65}-52095354335 x^{20} y^{70}\\
&\qquad+1196042149 x^{15} y^{75}-1716384 x^{10} y^{80}-6032 x^5 y^{85}-16 y^{90})\,,\\
F_4(x,y)&\=-x y^5(231 x^{95}-87604 x^{90} y^5+25209980 x^{85} y^{10}+1278608345 x^{80} y^{15}+28385930062 x^{75} y^{20}\\
&\qquad-156161181524 x^{70} y^{25}-3918089501554 x^{65} y^{30}+2542402356410 x^{60} y^{35}-21700694333270 x^{55} y^{40}\\
&\qquad-5108101637858 x^{50} y^{45}-21968910753568 x^{45} y^{50}-1511305888168 x^{40} y^{55}\\
&\qquad-8123129966420 x^{35} y^{60}+467241056080 x^{30} y^{65}+38837423474 x^{25} y^{70}\\
&\qquad-5177696692 x^{20} y^{75}+111522673 x^{15} y^{80}+1101610 x^{10} y^{85}+2000 x^5 y^{90}+y^{95})\,.
\end{align*}
}

\subsection*{Acknowledgment}
The authors would like to express their thanks to Prof.~K.~Nagatomo for
helpful advice and continuous encouragement.
K.K.~would like to thank Prof.~C.H.~Lam, Y.~Arike and N.~Genra for useful discussions.
K.K.'s research is partly supported by the Australian Research Council Discovery Project DP160101520.
Y.S.~was supported in part by Japan Society for the Promotion of Science, Grant-in-Aid for Scientific Research (S) 16H06336.

\end{document}